\newcommand{\To}{\rightarrow}
\newcommand{\into}{\hookrightarrow}
\newcommand{\onto}{\twoheadrightarrow}
\newcommand{\e}{\varepsilon}
\newcommand{\noin}{\noindent}
\newcommand{\C}{\mathbb{C}}
\newcommand{\R}{\mathbb{R}}
\newcommand{\Z}{\mathbb{Z}}
\newcommand{\Cstar}{\mathrm{C^*}}
\newcommand{\Cred}{\mathrm{C^*_r}}
\newcommand{\G}{\Gamma}
\newcommand{\KH}{\mathcal{K}}
\newcommand{\U}{\mathrm{U}}
\newcommand{\M}{\mathrm{M}}
\newcommand{\GL}{\mathrm{GL}}
\newcommand{\sr}{\mathrm{sr}\:}
\newcommand{\tsr}{\mathrm{tsr}\:}
\newcommand{\bsr}{\mathrm{bsr}\:}
\newcommand{\csr}{\mathrm{csr}\:}
\newcommand{\gsr}{\mathrm{gsr}\:}
\newcommand{\spec}{\mathrm{sp}}
\newcommand{\dom}{\mathrm{dom}}
\newcommand{\K}{\mathcal{K}}
\newcommand{\id}{\mathrm{id}}
\newcommand{\Lg}{\mathrm{Lg}}
\newtheorem{thm}{Theorem}[section]
\newtheorem{lem}[thm]{Lemma}
\newtheorem{cor}[thm]{Corollary}
\newtheorem{prop}[thm]{Proposition}
\theoremstyle{definition}
\newtheorem{defn}[thm]{Definition}
\newtheorem{rem}[thm]{Remark}
\newtheorem{ex}[thm]{Example}
\newtheorem{prob}[thm]{Problem}
\newtheorem*{ack}{Acknowledgments}
\begin{document}
\title{Homotopical stable ranks for Banach algebras}
\author{Bogdan Nica}
\address{\newline Department of Mathematics and Statistics, University of Victoria, Victoria (BC), Canada V8W 3R4}
\date{March 4, 2011}

\begin{abstract}
\noin The connected stable rank and the general stable rank are homotopy invariants for Banach algebras, whereas the Bass stable rank and the topological stable rank should be thought of as dimensional invariants. This paper studies the two homotopical stable ranks, viz. their general properties as well as specific examples and computations. The picture that emerges is that of a strong affinity between the homotopical stable ranks, and a marked contrast with the dimensional ones.
\end{abstract}

\subjclass[2000]{46L85, 19A13, 19B10, 58B34}
\keywords{Stable ranks; Banach algebras.}
\thanks{Supported by a Postdoctoral Fellowship from the Pacific Institute for the Mathematical Sciences (PIMS)}
\maketitle

\section{Introduction}
We owe to Bass \cite{Bas64} the first notion of stable rank. Many other stable ranks have appeared since then, and it is customary to refer to this original stable rank as the \emph{Bass stable rank}. The Bass stable rank is a purely algebraic - in fact, ring-theoretic - notion. A topological relative of the Bass stable rank, the \emph{topological stable rank}, was introduced by Rieffel \cite{Rie83} in the context of Banach algebras. For $\Cstar$-algebras, the Bass stable rank and the topological stable rank coincide. Furthermore, they can be interpreted as ``noncommutative'' notions of dimension, due to the fact that the Bass / topological stable rank of $C(X)$, where $X$ is a compact Hausdorff space, is $\lfloor \tfrac{1}{2} \dim X \rfloor+1$. 

While investigating the topological stable rank, Rieffel \cite{Rie83} was prompted to define two other stable ranks: the \emph{connected stable rank}, and the \emph{general stable rank}. The first one is, again, topological, whereas the second one is algebraic. Among the four stable ranks we mentioned, the general stable rank is the least studied and arguably the hardest to compute. Yet it is also one of the most natural stable ranks. For the general stable rank starts from the regrettable fact that not all stably free modules are free (to paraphrase J.F. Adams \cite[p.2]{Ada}), and quantifies the property that stably free modules of big enough rank are free.

We initially embarked on a study of the general stable rank for Banach algebras. But very soon, a productive analogy with the connected stable rank emerged. Due to their distinctive feature of being homotopy invariants, the connected and the general stable ranks are collectively referred to as \emph{homotopical} stable ranks in what follows. This terminology is not only meant to mark the analogy between the connected and the general stable ranks, but also to emphasize the contrast with the \emph{dimensional} stable ranks, namely the Bass and the topological stable ranks. Thus, the paper ended up as a comparative study - homotopical stable ranks versus dimensional stable ranks. The emphasis is clearly on the former; in fact, many of the new results, though not all, concern the general stable rank. 

Let us describe the contents of the paper. Section 3 is devoted to definitions and basic facts on the quartet of stable ranks; we also illustrate the two extreme cases, stable rank one and infinite stable rank. In Section 4, we discuss the homotopy invariance of the homotopical stable ranks. The computation of the homotopical stable ranks for $C(X)$, the subject of Section 5, turns out to be much harder than the computation of the dimensional stable ranks. While for the connected stable rank we still have a useful cohomological criterion (Theorem~\ref{csrdim}), the situation for the general stable rank is rather unsatisfactory. Using some detailed information about the homotopy groups of unitary groups, we succeed in computing the general stable rank for $C(X)$ when $X$ is a sphere (Proposition~\ref{gsrspheres}) - thereby providing the first non-trivial computation in this direction. However, we do not know how to compute the general stable rank for $C(X)$ when $X$ is, say, a torus (Problem~\ref{gsrtori}). In turn, computing the homotopical stable ranks for $C(X)$ is crucial for computing the homotopical stable ranks for commutative Banach algebras: as we point out in Section 6, the homotopical stable ranks are invariant under the Gelfand transform (Theorem~\ref{Gelfand csrgsr}). Subsequent sections consider the behavior of the homotopical stable ranks under various operations: matrix algebras (Section 7), quotients (Section 8) and morphisms with dense image (Section 9), inductive limits (Section 11), and extensions (Section 12). The goal of the final section is the computation of the homotopical stable ranks for tensor products of extensions of $\mathcal{K}$ by commutative $\Cstar$-algebras. This extended example builds on Nistor's computation of the dimensional stable ranks for such $\Cstar$-algebras (\cite{Nis86}), as well as on a number of properties established throughout the paper.

It is often said that stable ranks are related to K-theory. Swan's problem, discussed at length in Section 10, is concerned with the following specific aspect: having in mind that K-theory is invariant across dense and spectrum-preserving morphisms, is the same true for stable ranks? Namely, are stable ranks invariant across dense and spectrum-preserving morphisms? While this problem is still open for the dimensional stable ranks, a positive answer for the the homotopical stable ranks is given in Theorem~\ref{Swancsrgsr}. This adds further support to the idea that the favor of being related to K-theory falls upon the homotopical stable ranks, rather than the dimensional ones. The first hint that the homotopical stable ranks are closer to K-theory than the dimensional stable ranks is, of course, the invariance under homotopy. Although we do not discuss this topic here, we would like to mention one more argument: the homotopical stable ranks provide the finest control in unstable K-theory, whereas the estimates involving the dimensional stable ranks are derived as secondary estimates. 

\begin{ack}
A preliminary version of this paper appeared in my dissertation \cite{Nic09}, written under the guidance of Guoliang Yu. I am most grateful to him for all the support and advice. I also thank Marc Rieffel and Leonard R. Rubin for some useful e-conversations. 
\end{ack}

\section{Conventions. Notations} For simplicity, we work with (complex) Banach algebras only. However, our Banach algebra setting could be safely enlarged to the context of Fr\'{e}chet algebras having an open group of invertibles. Throughout most of the paper, Banach algebras and their (continuous) morphisms are assumed to be unital. Starting with Section~\ref{Inductive limits}, we allow for algebras and morphisms which are not necessarily unital.

Let $A$ be a Banach algebra. The component of the identity in $\GL_n(A)$ is denoted $\GL_n^0(A)$. We often write $(a_i)\in A^n$, or $\underline{a}\in A^n$, to mean an $n$-tuple $(a_1,\dots,a_n)$. By $\mathcal{P}(A)$ we denote the set of isomorphism classes of finitely generated (f.g.) projective right $A$-modules; $\mathcal{P}(A)$ is an abelian monoid under direct sum. The K-theory of $A$ is understood in the topological sense; however, we find it more useful to adopt the algebraic picture for $K_0(A)$, namely, as the Grothendieck group of $\mathcal{P}(A)$.

Topological spaces are assumed to be Hausdorff and non-empty. As usual, $I^d$, $S^d$, and $T^d$ denote the $d$-dimensional cube, the $d$-dimensional sphere, and the $d$-dimensional torus.


The final piece of convention is the following notational abuse: for a morphism $\phi: A\To B$, we use $\phi$ to denote a number of natural maps induced by $\phi$, e.g., we write $\phi: \M_n(A)\to \M_n(B)$ and $\phi: A^n\to B^n$.

\section{Stable ranks} 
\subsection{Definitions}
Let $A$ be a (unital) Banach algebra. Consider, for each $n\geq 1$, the collection of $n$-tuples that generate $A$ as a left ideal:
\[\Lg_n(A)=\{(a_1,\dots,a_n):\; Aa_1+\ldots + Aa_n=A\}\subseteq A^n\]
Elements of $\Lg_n(A)$ are called \emph{(left) unimodular $n$-tuples}. A simple, but important, observation is that $\Lg_n(A)$ is open in $A^n$. Indeed, let $(a_i)\in \Lg_n(A)$. Then $\sum b_ia_i=1$ for some $(b_i)\in A^n$; let $U\subseteq A$ be a neighborhood of $0$ for which $\sum b_i U\subseteq A^\times -1$. We have $(a'_i)\in \Lg_n(A)$ whenever $a_i'\in a_i+U$, as $\sum b_ia'_i\in \sum b_ia_i +\sum b_i U=1+\sum b_i U\subseteq A^\times$.

There is an action of $\GL_n(A)$ on $\Lg_n(A)$, given by left-multiplying the transpose of a unimodular $n$-tuple by an invertible matrix: $(\alpha, \underline{a})\mapsto\alpha\cdot\underline{a}^T$ for $\alpha\in \GL_n(A)$ and $\underline{a}\in\Lg_n(A)$.

\begin{defn}\label{definestableranks} Let $A$ be a unital Banach algebra. Then:
\begin{itemize}
\item[(bsr)] the \emph{Bass stable rank} of $A$ is the least $n\geq 1$ such that the following holds: if $(a_1,\dots, a_{n+1})\in \Lg_{n+1}(A)$, then $(a_1+x_1a_{n+1}, \dots, a_n+x_na_{n+1})\in \Lg_n(A)$ for some $(x_i)\in A^n$

\smallskip
\item[(tsr)] the \emph{topological stable rank} of $A$ is the least $n\geq 1$ such that $\Lg_n(A)$ is dense in $A^n$

\smallskip
\item[(csr)] the \emph{connected stable rank} of $A$ is the least $n\geq 1$ such that $\GL^0_m(A)$ acts transitively on $\Lg_m(A)$ for all $m\geq n$

\smallskip
\item[(csr$'$)] the \emph{connected stable rank} of $A$ is the least $n\geq 1$ such that $\Lg_m(A)$ is connected for all $m\geq n$

\smallskip
\item[(gsr)] the \emph{general stable rank} of $A$ is the least $n\geq 1$ such that $\GL_m(A)$ acts transitively on $\Lg_m(A)$ for all $m\geq n$

\smallskip
\item[(gsr$'$)] the \emph{general stable rank} of $A$ is the least $n\geq 1$ such that the following holds: for all $m\geq n$, if $P$ is a right $A$-module satisfying $P\oplus A\simeq A^m$ then $P\simeq A^{m-1}$

\smallskip
\end{itemize}
The above stable ranks of $A$ are respectively denoted $\bsr A$, $\tsr A$, $\csr A$, $\gsr A$. The generic $\sr A$ stands for any one of these.
\end{defn}

\begin{rem}
Several comments concerning the above definition are in order.

a) The action of $\GL^0_n(A)$ on $\Lg_n(A)$ has open orbits. Indeed, fix $(a_i)\in\Lg_n(A)$. Pick $(b_i)\in A^n$ with $\sum b_ia_i=1$, and let $U$ be a neighborhood of $0$ such that $1_n+(u_ib_j)_{1\leq i,j\leq n}$ is in $\GL^0_n(A)$ for all $u_i\in U$. As $1_n+(u_ib_j)_{i,j}$ takes $(a_i)$ to $(a_i+u_i)$, it follows that $(a_i)+U^n$ is included in the $\GL^0_n(A)$-orbit of $(a_i)$. 

Therefore $\Lg_n(A)$ is connected if and only if $\GL^0_n(A)$ acts transitively on $\Lg_n(A)$. This justifies the equivalence of (csr) and (csr$'$) (cf. \cite[Cor.8.4]{Rie83}). Typically, we use (csr) when we pursue the analogy with the general stable rank; (csr$'$), on the other hand, is usually more convenient when the connected stable rank is considered on its own.

b) The equivalence between (gsr) and (gsr$'$) is proved in \cite[Prop.10.5]{Rie83}. It should be stressed that the general stable rank is - just like the Bass stable rank - a ring-theoretic notion, and that many of the facts appearing herein hold for rings, or can be adapted to a ring-theoretic context.

c) Under the action of $\GL_n(A)$ on $\Lg_n(A)$, a matrix takes the unimodular $n$-tuple $(0,\dots, 0,1)$ to the last column of the matrix. Hence $\GL_n(A)$, respectively $\GL^0_n(A)$, acts transitively on $\Lg_n(A)$ if and only if each unimodular $n$-tuple is the last column of some matrix in $\GL_n(A)$, respectively $\GL^0_n(A)$.

d)  If $A$ is non-unital, then $\sr A$ is defined to be $\sr A^+$, where $A^+$ is the unitization of $A$.

e) It is easy to see that $\sr A\oplus B= \max\{\sr A,\sr B\}$ whenever $A$ and $B$ are unital Banach algebras. In particular, if $A^+$ denotes the Banach algebra obtained by adding a new unit to a unital Banach algebra $A$, then $\sr A^+=\sr A$ since $\sr \C=1$.

f) We put $\sr A=\infty$ whenever there is no integer $n$ satisfying the required stable rank condition.


g) Definition~\ref{definestableranks} actually describes the \emph{left} stable ranks. The right counterpart for each left stable rank is defined with respect to the right unimodular $n$-tuples $\{(a_1,\dots,a_n):\; a_1A+\ldots + a_nA=A\}$. Clearly, for Banach $*$-algebras there is no difference between left and right stable ranks. For general Banach algebras, the Bass stable rank is left-right symmetric (\cite{Vas71}; see also \cite[Prop.11.3.4]{MR87}), and so are the connected stable rank (\cite{CL86b}) and the general stable rank (\cite[Lem.11.1.13]{MR87}). However, the topological stable rank may not be left-right symmetric (\cite{DLMR}). 
\end{rem}
The qualitative similarities between the four stable ranks are displayed in the following table:

\[
\begin{tabular}{rc c}
\multicolumn{1}{r}{}
 &  \multicolumn{1}{c}{topological}
 & \multicolumn{1}{c}{algebraic} \\
dimensional & $\tsr$ & $\bsr$ \\
homotopical & $\csr$ & $\gsr$ \\
\end{tabular}
\]
\smallskip

Quantitatively, the stable ranks are related as follows:

\begin{thm}\label{ord} Let $A$ be a Banach algebra. Then $\gsr A \leq \csr A \leq\bsr A+1\leq\tsr A+1$.
\end{thm}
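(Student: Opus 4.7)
The plan is to handle the three inequalities separately, each by a quite different technique. The inequality $\gsr A \le \csr A$ is immediate from the definitions: since $\GL^0_m(A) \subseteq \GL_m(A)$, transitivity of the smaller group action implies transitivity of the larger, so the condition defining $\csr A \le n$ implies the one defining $\gsr A \le n$.

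For the inequality $\bsr A \le \tsr A$ (Rieffel's classical theorem), I would set $n = \tsr A$ and fix $(a_1, \dots, a_{n+1}) \in \Lg_{n+1}(A)$ with a witness $\sum b_i a_i = 1$. The goal is to produce $(x_i) \in A^n$ such that $(a_i + x_i a_{n+1})_{i \le n} \in \Lg_n(A)$, i.e., to show that the affine slice $(a_1, \dots, a_n) + (A a_{n+1})^n$ meets the open dense subset $\Lg_n(A) \subseteq A^n$. The plan is to exploit the identity $\sum_{i \le n} b_i a_i = 1 - b_{n+1} a_{n+1}$ in order to convert perturbations of $b_{n+1} a_{n+1}$, which live in $A a_{n+1}$ and can be approximated by virtue of density, back into the required perturbations of the $a_i$. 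This will be the delicate step of the whole argument and the main obstacle of the proof.

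For $\csr A \le \bsr A + 1$, I would write $n = \bsr A$ and show that $\GL^0_m(A)$ acts transitively on $\Lg_m(A)$ for every $m \ge n+1$, equivalently, that every $(a_1, \dots, a_m) \in \Lg_m(A)$ can be brought to $(0, \dots, 0, 1)$ by a sequence of elementary row operations. Each elementary matrix $I + c E_{ij}$ with $i \ne j$ lies in $\GL^0_m(A)$ via the path of invertibles $t \mapsto I + tc E_{ij}$. The plan has three stages. First, appeal to the fact that the stable-range condition at level $n$ propagates to all levels $m-1 \ge n$ (a standard absorption argument) to find $(x_i)$ with $(a_i + x_i a_m)_{i < m} \in \Lg_{m-1}(A)$, and transform by $I + \sum_{i<m} x_i E_{i,m}$ to the $m$-tuple $(a'_1, \dots, a'_{m-1}, a_m)$ whose first $m-1$ coordinates are unimodular. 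Second, picking $c_i$ with $\sum c_i a'_i = 1$, transform by $I - \sum_{i<m} (a_m c_i) E_{m,i}$ to kill the last coordinate and reach $(a'_1, \dots, a'_{m-1}, 0)$. Third, transform by $I + \sum_{i<m} c_i E_{m,i}$ to reach $(a'_1, \dots, a'_{m-1}, 1)$, then by $I - \sum_{i<m} a'_i E_{i,m}$ to reach $(0, \dots, 0, 1)$. This third inequality is essentially formal manipulation with elementary matrices, in marked contrast with Rieffel's inequality.
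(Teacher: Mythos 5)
First, a remark on scope: the paper does not actually prove Theorem~\ref{ord} in the text --- it attributes it to Rieffel \cite{Rie83} and to Corach--Larotonda \cite{CL84} --- so your proposal has to be measured against the standard arguments rather than an in-paper proof. Two of your three inequalities are in good shape. The inequality $\gsr A\le\csr A$ is indeed immediate from $\GL^0_m(A)\subseteq\GL_m(A)$. Your proof of $\csr A\le\bsr A+1$ is correct and essentially complete: the four unipotent matrices you write down do carry $(a_1,\dots,a_m)^T$ to $(0,\dots,0,1)^T$, each lies in $\GL^0_m(A)$ along the path $t\mapsto I+tN$ with $N$ square-zero, and the only external input is Vaserstein's theorem that the Bass stable range condition at level $\bsr A$ propagates to all higher levels --- a genuine (if classical) lemma that you are right to invoke rather than reprove, but which you should cite explicitly since the paper's Definition~\ref{definestableranks}(bsr) only imposes the condition at the single level $n+1$.

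The gap is the middle inequality $\bsr A\le\tsr A$. What you offer there is a statement of the goal, not an argument: you correctly reduce to showing that the affine slice $(a_1,\dots,a_n)+(Aa_{n+1})^n$ meets $\Lg_n(A)$, and then defer the "delicate step" of converting perturbations of $b_{n+1}a_{n+1}$ into the required perturbations of the $a_i$ "by virtue of density." But density of $\Lg_n(A)$ in $A^n$ is, by itself, the wrong kind of information for this: an open dense set can be disjoint from a proper closed affine subspace (already $\{(x,y):y\neq 0\}$ misses the line $y=0$ in $\C^2$), so nothing about meeting the slice follows from density alone, and the identity $\sum_{i\le n}b_ia_i=1-b_{n+1}a_{n+1}$ does not explain how to trade the arbitrary small perturbation of $(a_1,\dots,a_n)$ that density supplies for one of the constrained form $(x_1a_{n+1},\dots,x_na_{n+1})$. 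That trade is the entire content of Rieffel's Theorem~2.3 in \cite{Rie83}: the density has to be applied to a suitable auxiliary row whose error can be absorbed into an invertible element, and one must also negotiate a left/right unimodularity mismatch (note that $(c_i)\in\Lg_n(A)$ lets you solve $\sum x_ic_i=b$, not $\sum c_ix_i=b$), an issue your sketch does not acknowledge. As it stands, the chain $\csr A\le\bsr A+1\le\tsr A+1$ is therefore only half established: you need either to supply Rieffel's argument for $\bsr A\le\tsr A$ in full, or to cite it as an external input on the same footing as the Vaserstein propagation lemma.
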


\begin{thm}\label{HV} Let $A$ be a $\Cstar$-algebra. Then $\bsr A=\tsr A$.
\end{thm}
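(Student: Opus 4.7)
Since Theorem~\ref{ord} gives $\bsr A \leq \tsr A$ in general, my task is to establish the reverse inequality $\tsr A \leq \bsr A$ under the $\Cstar$-hypothesis. Set $n=\bsr A$; I need to show that $\Lg_n(A)$ is dense in $A^n$. Fix $(a_1,\dots,a_n)\in A^n$ and $\e>0$, and aim to produce a unimodular $n$-tuple within distance $\e$.

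The $\Cstar$-structure enters through a basic observation: if the positive element $c=\sum_{i=1}^n a_i^*a_i$ is invertible in $A$, then $(a_1,\dots,a_n)\in\Lg_n(A)$ automatically, because $\sum_i (c^{-1}a_i^*)\, a_i = 1$. So the problem reduces to perturbing $(a_1,\dots,a_n)$ by less than $\e$ in each coordinate until $\sum a_i^*a_i$ becomes invertible.

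The bridge to the Bass hypothesis is to append a small auxiliary entry. Using continuous functional calculus on $c$, I would choose $b=f(c)$ with $f$ small in sup-norm but with $f(t)>0$ for $t$ near $0$, so that $c+b^2$ has spectrum bounded away from $0$. Then $(a_1,\dots,a_n,b)\in\Lg_{n+1}(A)$ is witnessed by $\sum_i\bigl((c+b^2)^{-1}a_i^*\bigr) a_i + (c+b^2)^{-1}b\cdot b = 1$, and invoking $\bsr A\leq n$ delivers $x_1,\dots,x_n\in A$ with $(a_1+x_1 b,\dots,a_n+x_n b)\in\Lg_n(A)$.

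The main obstacle is norm control on the perturbation $x_i b$: a naive application of the Bass reduction yields $x_i$ whose norms scale inversely with the spectral gap of $c+b^2$, so shrinking $\|b\|$ shrinks the gap just as fast and the net perturbation fails to tend to $0$. The Herman--Vaserstein trick is to choose $f$ and the witnesses $x_i$ in tandem, writing a functional-calculus partition of unity on $\spec(c)$ that supplies an explicit Bass-style identity in which the norms of the $x_i$ are governed by $\|f\|_\infty$ alone. With that coordination in place, shrinking $\|f\|_\infty$ produces an $\e$-approximation in $\Lg_n(A)$, which completes the density argument and thus the theorem.
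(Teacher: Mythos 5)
The paper itself offers no proof of this statement; it is quoted from Herman--Vaserstein [HV], so the only comparison available is with your argument on its own terms. Your reduction is set up correctly: $\bsr A\leq\tsr A$ is free from Theorem~\ref{ord}, the observation that invertibility of $c=\sum a_i^*a_i$ forces $(a_i)\in\Lg_n(A)$ via $z_i=c^{-1}a_i^*$ is right, the auxiliary entry $b=f(c)$ with $f(0)>0$ and $\|f\|_\infty\leq\e$ does make $(a_1,\dots,a_n,b)$ unimodular (since $t+f(t)^2$ is positive on the compact set $\mathrm{sp}(c)$), and applying $\bsr A\leq n$ then yields $x_i$ with $(a_i+x_ib)\in\Lg_n(A)$. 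You also correctly isolate the one real difficulty: nothing controls $\|x_i\|$, so $\|x_ib\|\leq\|x_i\|\,\|f\|_\infty$ need not be small, and naive rescaling of $b$ gets absorbed into the $x_i$.

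The gap is that your final paragraph does not resolve this difficulty; it only asserts that it can be resolved. The specific mechanism you describe --- ``a functional-calculus partition of unity on $\mathrm{sp}(c)$ that supplies an explicit Bass-style identity in which the norms of the $x_i$ are governed by $\|f\|_\infty$ alone'' --- cannot work as stated. If the witnesses $x_i$ were produced by an explicit functional-calculus formula independent of the Bass hypothesis, the argument would prove that every unital $\Cstar$-algebra has $\tsr\leq n$ for every $n$, which is absurd; and if they are the witnesses supplied by the hypothesis $\bsr A\leq n$, then that hypothesis is a purely algebraic existence statement carrying no norm information, so their norms cannot be ``governed by $\|f\|_\infty$ alone.'' The actual Herman--Vaserstein argument does not attempt to bound the $x_i$; it exploits the $\Cstar$-structure to convert the mere existence of a unimodular tuple of the form $(a_i+x_ib)$, with $b$ supported where $c$ is small, into a genuinely small perturbation of $(a_i)$ --- and that conversion is precisely the content of the theorem. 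As written, your proof assumes its hardest step, so you should either supply that step in detail or cite [HV] for it.
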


Theorem~\ref{ord} is due to Rieffel \cite{Rie83}; implicitly, the last three inequalities are also proved by Corach and Larotonda \cite{CL84}. Theorem~\ref{HV}, due to Herman and Vaserstein \cite{HV}, is not true for general Banach algebras (cf. Examples \ref{diskalgebra} and \ref{H-infty}). It would be interesting, however, to extend it beyond the case of $\Cstar$-algebras.

\subsection{Stable rank one}\label{sr1 section} Let us look closely at the distinguished case when stable ranks take on their least possible value. The other extreme case, infinite stable ranks, is discussed in the next paragraph.

\begin{prop}\label{stablerankone} Let $A$ be a Banach algebra. Then the following implications hold:
\begin{eqnarray*}
\xymatrix{
\tsr A=1\;\ar@{=>}[r] &\; \bsr A=1\;\ar@{=>}[d] \\
\csr A=1\;\ar@{=>}[r]  &\; \gsr A=1\;\ar@{=>}[r]  &\; A \textrm{ is stably finite}
}
\end{eqnarray*}
\end{prop}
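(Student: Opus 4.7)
The plan is to handle the four implications separately: dispatch the two easy ones by appeal to Theorem~\ref{ord}, and then treat the remaining two module-theoretic claims directly.

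The implications $\tsr A = 1 \Rightarrow \bsr A = 1$ and $\csr A = 1 \Rightarrow \gsr A = 1$ are immediate from Theorem~\ref{ord}, which gives $\bsr A \leq \tsr A$ and $\gsr A \leq \csr A$: the minimal value $1$ transfers across each inequality.

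For $\bsr A = 1 \Rightarrow \gsr A = 1$, I would invoke the classical Bass Cancellation Theorem: when $\bsr A = 1$, any two f.g. projective right $A$-modules $P$ and $Q$ satisfying $P \oplus A \simeq Q \oplus A$ are isomorphic. Specializing with $Q = A^{m-1}$, an isomorphism $P \oplus A \simeq A^m$ (for $m \geq 1$) yields $P \simeq A^{m-1}$; this is precisely condition (gsr$'$) with $n = 1$, and the equivalence of (gsr) and (gsr$'$) then gives $\gsr A = 1$.

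For $\gsr A = 1 \Rightarrow A$ stably finite, recall that $A$ is stably finite exactly when, whenever $A^n \simeq A^n \oplus P$ for some $n \geq 1$ and some f.g. projective $P$, one must have $P \simeq 0$. Given such an isomorphism, I would rewrite it as $(A^{n-1} \oplus P) \oplus A \simeq A^n$ and apply (gsr$'$) with $m = n$ to peel off one copy of $A$, obtaining $A^{n-1} \oplus P \simeq A^{n-1}$; iterating this step $n$ times in total yields $P \simeq 0$. The main external input, used in the $\bsr A = 1$ step, is Bass' cancellation theorem; modulo this, both module-theoretic arguments reduce to formal manipulations within $\mathcal{P}(A)$ via the equivalence of (gsr) and (gsr$'$).
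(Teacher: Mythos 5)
Your proof is correct, but for the two nontrivial implications it takes a genuinely different route from the paper, which works at the level of elements and unimodular tuples rather than modules. For $\bsr A=1\Rightarrow\gsr A=1$, the paper first notes $\gsr A\leq 2$ from Theorem~\ref{ord}, so only transitivity of $A^\times$ on $\Lg_1(A)$ (i.e., finiteness of $A$) remains, and this is settled by a two-line computation: if $ba=1$ then $(b,1-ab)\in\Lg_2(A)$, so $b+c(1-ab)$ is left invertible, and $(b+c(1-ab))a=1$ forces $a$ to be invertible. You instead invoke the classical cancellation theorem for stable rank one; that is a valid (and well-known) input, but note that Bass's original version requires the module being recovered to contain a free summand of rank $\geq\bsr A$, which covers $m\geq 2$ but leaves the $m=1$ case $P\oplus A\simeq A\Rightarrow P\simeq 0$ to a separate (easy) finiteness check --- the unconditional form you state is really Evans/Vaserstein. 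For $\gsr A=1\Rightarrow$ stable finiteness, the paper argues via Corollary~\ref{morita} that $\gsr\M_n(A)=1$, hence each $\M_n(A)$ is finite; your iterated peeling argument with (gsr$'$) is a nice alternative that avoids the matrix-algebra machinery entirely, at the price of the (standard, but unproved in your write-up) translation that $\M_n(A)$ is finite iff $A^n\oplus P\simeq A^n$ forces $P\simeq 0$. In short: your version is uniformly module-theoretic and leans on one classical external theorem; the paper's is elementary and self-contained for $\bsr\Rightarrow\gsr$ but uses the matrix stability result for the last implication.
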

Recall that a unital algebra $A$ is \emph{finite} if left-invertible implies invertible in $A$, equivalently, if right-invertible implies invertible in $A$; otherwise, $A$ is \emph{infinite}. We say that $A$ is \emph{stably finite} if each matrix algebra $\M_n(A)$ is finite. In what regards the relation between stable rank one and stable finiteness, Proposition~\ref{stablerankone} sharpens and unifies \cite[Prop.3.1]{Rie83} and \cite[Prop.1.15]{Elh95}.

\begin{proof}
By Theorem~\ref{ord}, $\tsr A=1$ implies $\bsr A=1$, and $\csr A=1$ implies $\gsr A=1$. That $\bsr A=1$ implies $\gsr A=1$ is well-known; we include a proof for completeness. If $\bsr A=1$, then $\gsr A\leq 2$ by Theorem~\ref{ord}. In order to have $\gsr A=1$, we need to show that $A^\times$ acts transitively on $\Lg_1(A)$, i.e., that $A$ is finite. Let $a\in A$ be left invertible, say $ba=1$ with $b\in A$. From $ab+(1-ab)=1$ we get $(b, 1-ab)\in\Lg_2(A)$. Then $b+c(1-ab)$ is left invertible for some $c\in A$, since $\bsr A=1$. But $(b+c(1-ab))a=1$, so $a(b+c(1-ab))=1$. Thus $a$ is right-invertible as well, therefore invertible.

For the remaining implication, note that $\gsr A=1$ implies that $A^\times$ acts transitively on $\Lg_1(A)$, in other words $A$ is finite. But $\gsr A=1$ also implies that $\gsr \M_n(A)=1$ for each $n$ (see Corollary~\ref{morita}), so each $\M_n(A)$ is finite. We conclude that $A$ is stably finite.
\end{proof}

In general, the implications in Proposition~\ref{stablerankone} cannot be reversed. Also, having topological or Bass stable rank equal to 1 need not imply that the connected stable rank is 1. In fact, the following proposition - due in part to Elhage Hassan \cite[Prop.1.15]{Elh95} - shows that $\tsr A=1$ implies $\csr A=1$ precisely when $K_1(A)=0$:

\begin{prop}\label{csr vs K1}
Let $A$ be a Banach algebra. Then $\csr A=1$ implies $K_1(A)=0$. Furthermore, if $\tsr A=1$ then the converse holds.
\end{prop}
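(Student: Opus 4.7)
The plan is to show by induction on $n\geq 1$ that $\csr A = 1$ forces $\GL_n(A)$ to be path-connected; since $K_1(A)=\varinjlim \pi_0(\GL_n(A))$, this gives $K_1(A)=0$. For the base case $n=1$, transitivity of $\GL_1^0(A)=(A^\times)^0$ on $\Lg_1(A)$ forces $\Lg_1(A)$ to equal the orbit of $1$, namely $(A^\times)^0$; since $A^\times=\GL_1(A)\subseteq\Lg_1(A)$, this collapses to $A^\times=(A^\times)^0$, i.e., $\GL_1(A)$ is connected. For the inductive step, given $\alpha\in\GL_n(A)$ with $n\geq 2$, the last column $v=\alpha e_n$ lies in $\Lg_n(A)$ (with Bezout coefficients supplied by the last row of $\alpha^{-1}$); transitivity of the $\GL_n^0(A)$-action on $\Lg_n(A)$ yields $\beta\in\GL_n^0(A)$ with $\beta e_n=v$, and then $M:=\beta^{-1}\alpha$ has last column $e_n$, hence block form $\bigl(\begin{smallmatrix}\gamma & 0 \\ b^T & 1\end{smallmatrix}\bigr)$ with $\gamma\in\GL_{n-1}(A)$ (invertibility of $M$ forces invertibility of $\gamma$). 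The inductive hypothesis connects $\gamma$ to $I_{n-1}$, and linearly scaling $b^T$ to zero then connects $M$ to $I_n$ inside $\GL_n(A)$; left-multiplying this path by $\beta$ gives a path from $\beta\in\GL_n^0(A)$ to $\alpha$, placing $\alpha$ in $\GL_n^0(A)$.

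\textbf{Converse.} Assume $\tsr A=1$ and $K_1(A)=0$. Theorem~\ref{ord} gives $\csr A\leq\bsr A+1\leq 2$, so $\Lg_n(A)$ is connected for all $n\geq 2$; the same chain yields $\gsr A=1$, hence $A$ is finite (Proposition~\ref{stablerankone}) and $\Lg_1(A)=A^\times$. It therefore suffices to prove that $A^\times$ is path-connected. The plan is to invoke the stability principle that $\tsr A=1$ forces the inclusion $A^\times=\GL_1(A)\hookrightarrow\GL_\infty(A)$ to induce an isomorphism $\pi_0(A^\times)\isomto K_1(A)$: surjectivity is obtained by iterating the fibration sequence $\GL_{n-1}(A)\to\GL_n(A)\to\Lg_n(A)$ together with the just-established connectedness of $\Lg_n(A)$ for $n\geq 2$, while injectivity uses density of invertibles in $A$ to straighten a path in $\GL_N(A)$ witnessing triviality of $\alpha\oplus 1_{N-1}$ into an actual path inside $\GL_1(A)$. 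Granting this isomorphism, $K_1(A)=0$ forces $\pi_0(A^\times)=0$, closing the argument.

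\textbf{Main obstacle.} The forward direction is routine block-matrix bookkeeping. The substance of the statement sits in establishing that $\pi_0(A^\times)\to K_1(A)$ is injective under $\tsr A=1$: given $\alpha\in A^\times$ with $\alpha\oplus 1_{N-1}\in\GL_N^0(A)$, one must genuinely construct a path from $\alpha$ to $1$ that stays within $A^\times$. The density of invertibles is the key input enabling one to ``compress'' stabilized connectedness back down to $\GL_1$; the weaker bound $\csr A\leq 2$ (which is all $\tsr A=1$ yields automatically) does not suffice, as the fibration long exact sequence delivers only surjectivity, not injectivity, of $\pi_0(\GL_1(A))\to\pi_0(\GL_2(A))$.
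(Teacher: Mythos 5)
Your argument is correct, and the forward direction takes a genuinely different route from the paper's. The paper deduces connectedness of $\GL_n(A)$ from $\csr \M_n(A)=1$, which it obtains from the matrix-algebra estimate (Corollary~\ref{morita}, resting on Theorem~\ref{matrixcsrgsr}); you instead run a direct induction on $n$, using only the transitivity of $\GL_n^0(A)$ on $\Lg_n(A)$ for every $n$ --- which is exactly what $\csr A=1$ says under condition (csr) of Definition~\ref{definestableranks} --- together with elementary block-matrix reductions. Your version is more self-contained and in effect reproves the special case of Corollary~\ref{morita} that is actually needed; the block bookkeeping (the top-right block of $M$ vanishes, $\gamma$ is forced to be invertible, and the two-stage path to $1_n$ stays in $\GL_n(A)$) all checks out. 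For the converse, you and the paper follow the same path: reduce to connectedness of $A^\times=\Lg_1(A)$ via finiteness of $A$, and then invoke the isomorphism $\pi_0(A^\times)\isomto K_1(A)$ valid when $\tsr A=1$. (One small attribution slip: the chain in Theorem~\ref{ord} only yields $\gsr A\leq 2$, not $\gsr A=1$; the latter, and hence finiteness, comes from $\bsr A=1$ via Proposition~\ref{stablerankone}, which you do cite.) The paper simply quotes this isomorphism as Rieffel's theorem (\cite[Thm.10.10]{Rie83}, \cite[Thm.2.10]{Rie87}), whereas you correctly isolate its injectivity half as the real content and sketch how density of invertibles should compress a stabilized path back into $\GL_1(A)$; that sketch is not yet a proof, so to match the paper's standard you should either cite Rieffel here or carry out the compression argument in full.
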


\begin{proof} If $\csr A=1$ then $\Lg_1(A)=A^\times$ is connected. Since $\csr \M_n(A)=1$ for each $n$ (Corollary~\ref{morita}), we obtain that $\GL_n(A)$ is connected for each $n$. Therefore $K_1(A)=0$. 

For the second part, assume $\tsr A=1$. Then $\csr A\leq 2$ by Theorem~\ref{ord}, and $A$ is finite by Proposition~\ref{stablerankone}. Also, a theorem of Rieffel (\cite[Thm.10.10]{Rie83}, \cite[Thm.2.10]{Rie87}) says that the natural map $\pi_0(A^\times)\to K_1(A)$ is an isomorphism. Hence, if $K_1(A)=0$ then $A^\times=\Lg_1(A)$ is connected, which implies that $\csr A=1$, as desired.
\end{proof}

\begin{rem} A Banach algebra $A$ has $\tsr A=1$ if and only if the invertible elements of $A$ are dense in $A$ (Rieffel \cite[Prop.3.1]{Rie83}).
\end{rem}

\begin{rem}\label{freeness} Inspecting condition ($\gsr '$) of Definition~\ref{definestableranks}, we see that the following are equivalent for a Banach algebra $A$: 

$\cdot$ $\gsr A=1$

$\cdot$ $A$ enjoys the IBN property, and every f.g. stably free $A$-module is free

\noindent Recall that $A$ has the \emph{Invariant Basis Number property} if $A^m\simeq A^n$ (as f.g. right $A$-modules) implies $m=n$; in K-theoretic terms, this is equivalent to $[A]$ having infinite order in $K_0(A)$. Since stable finiteness implies the IBN property, and general stable rank equal to $1$ implies stable finiteness (Proposition~\ref{stablerankone}), we obtain another statement equivalent to $\gsr A=1$:

$\cdot$ $A$ is stably finite, and every f.g. stably free $A$-module is free

\noindent \emph{Complete finiteness} in the sense of Davidson and Ji \cite[Def.2.2]{DJi08} is precisely the property that the general stable rank is equal to 1. The equivalent form given above is an answer to their question of distinguishing the completely finite $\Cstar$-algebras among the stably finite ones.
\end{rem}

\begin{ex}
The irrational rotation $\Cstar$-algebra $A_\theta$ has $\bsr A_\theta=\tsr A_\theta=1$ (Putnam \cite{Put90}). Hence $\gsr A_\theta=1$, and $\csr A_\theta=2$ because $K_1(A_\theta)$ is non-trivial.
\end{ex}

\begin{ex}\label{examplestablerankone}
The reduced $\Cstar$-algebra of a torsion-free, non-elementary hyperbolic group has Bass / topological stable rank $1$ (Dykema and de la Harpe \cite{DdH99}; see also R{\o}rdam \cite{Ror97} for the case of free groups). Thus the general stable rank is $1$, and the connected stable rank is $1$ or $2$ according to whether the $K_1$-group vanishes or not.
\end{ex}

\subsection{Infinite stable rank} The following simple observation is a good source of Banach algebras having all their stable ranks infinite (compare \cite{CL83}):

\begin{prop}\label{lackIBN}
Let $A$ be a Banach algebra. If $[A]$ has finite order in $K_0(A)$, then $\sr A=\infty$. 
\end{prop}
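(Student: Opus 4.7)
Plan:

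By Theorem~\ref{ord} the chain $\gsr A \leq \csr A \leq \bsr A + 1 \leq \tsr A + 1$ reduces the claim to proving $\gsr A = \infty$: if $\gsr A$ is infinite then all three upper bounds collapse.

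The hypothesis first needs to be turned into a concrete module isomorphism. From $[A]$ having finite order in $K_0(A)$, we get $k[A] = 0$ for some $k \geq 1$, i.e.\ $A^k \oplus P \simeq P$ for some $P \in \mathcal{P}(A)$; choosing $Q \in \mathcal{P}(A)$ and $M \geq 1$ with $P \oplus Q \simeq A^M$ and adding $Q$ to both sides gives $A^{k+M} \simeq A^M$. So the set
\[ S := \{N \geq 1 : A^{k' + N} \simeq A^N \text{ for some } k' \geq 1 \} \]
is non-empty; let $N_0 := \min S$ and fix $k_0 \geq 1$ with $A^{k_0 + N_0} \simeq A^{N_0}$.

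The core step is iteration plus a single application of (gsr$'$). Suppose toward a contradiction that $\gsr A = n < \infty$. Iterating $A^{k_0 + N_0} \simeq A^{N_0}$ gives $A^{t k_0 + N_0} \simeq A^{N_0}$ for every $t \geq 0$. Set $P_0 := A^{N_0 - 1}$, so that $P_0 \oplus A \simeq A^{N_0}$, and choose $t \geq 1$ large enough that $m := t k_0 + N_0 \geq n$. Then $P_0 \oplus A \simeq A^m$, and (gsr$'$) forces $P_0 \simeq A^{m-1}$, i.e.
\[ A^{N_0 - 1} \simeq A^{t k_0 + N_0 - 1}. \]
If $N_0 \geq 2$ this places $N_0 - 1$ in $S$ strictly below $N_0 = \min S$, contradicting minimality. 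If $N_0 = 1$ the iso reads $A^{t k_0} \simeq 0$, forcing $A = 0$ and contradicting unitality.

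The main obstacle is that (gsr$'$) is only a one-$A$-at-a-time cancellation: applied directly to $A^k \oplus A^N \simeq A^N$ it yields only $\gsr A \geq 2$. The way around it is to iterate the iso up to $A^{t k_0 + N_0} \simeq A^{N_0}$ before invoking (gsr$'$), thereby absorbing the whole block $A^{t k_0}$ into a single free-rank increment placed at a level $m \geq \gsr A$; combined with the minimal choice of $N_0$, this turns the resulting isomorphism into an outright contradiction rather than a statement consistent with the data.
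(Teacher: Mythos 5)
Your proof is correct and follows essentially the same swindle as the paper: translate the finite-order hypothesis into a failure of IBN, take the minimal $N_0$ with $A^{k_0+N_0}\simeq A^{N_0}$, iterate the isomorphism into the range $m\geq\gsr A$, and apply (gsr$'$) to contradict minimality. Your explicit handling of the $N_0=1$ edge case and of the passage from $k[A]=0$ to a concrete module isomorphism is slightly more detailed than the paper's, but the argument is the same.
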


We remind the reader that $\sr A$ denotes any one of $\bsr A$, $\tsr A$, $\csr A$, $\gsr A$. The contrapositive of the above proposition, that $\sr A<\infty$ implies $[A]$ has infinite order in $K_0(A)$, is a relative of the fact that $\sr A=1$ implies stable finiteness of $A$ (Proposition~\ref{stablerankone}).

\begin{proof} It suffices to show that $\gsr A=\infty$. Assume, on the contrary, that $\gsr A$ is finite; we perform the following swindle. Since $A$ does not have the IBN property, we may consider the smallest $n\geq 1$ for which there is some $m>n$ such that $A^n\simeq A^m$ as right $A$-modules. Then $A^n\simeq A^{m+k(m-n)}$ for all $k\geq 0$. Pick $k$ such that $m+k(m-n)\geq \gsr A$. By (gsr$'$) of Definition~\ref{definestableranks} we have $A^{n-1}\simeq A^{m+k(m-n)-1}$, which contradicts the choice of $n$.
\end{proof}

\begin{rem} Elhage Hassan \cite[Prop.1.4]{Elh95} shows that a $\Cstar$-algebra $A$ which contains $n\geq 2$ isometries $s_1, \dots, s_n$ such that $\sum _1 ^n s_is_i^*=1$ has $\csr A=\infty$; actually, the proof shows the stronger fact that $\gsr A=\infty$. We point out that this result can be viewed as a consequence of Proposition~\ref{lackIBN}. Indeed, the $\Cstar$-subalgebra of $A$ generated by $s_1, \dots, s_n$ is isomorphic to the Cuntz algebra $\mathcal{O}_n$. Since $[\mathcal{O}_n]$ has finite order in $K_0(\mathcal{O}_n)$, it follows that $[A]$ has finite order in $K_0(A)$. Hence $\sr A=\infty$. 

From \cite[Prop.1.4]{Elh95}, Elhage Hassan erroneously concludes that every purely infinite, simple $\Cstar$-algebra has infinite connected stable rank. The correct computation appears in the following example. We remind the reader that our $\Cstar$-algebras are assumed to be unital.
\end{rem}

\begin{ex}\label{purelyinfinite} To begin with, consider the general case of an infinite, simple $\Cstar$-algebra: its dimensional stable ranks are infinite (this follows from \cite[Prop.6.5]{Rie83} as soon as one knows \cite[6.11.3]{Bla98}), and the homotopical stable ranks are at least $2$. 

Now let $A$ be a purely infinite, simple $\Cstar$-algebra (\cite{Cun81}; see also \cite[\S 6.11]{Bla98}). The following is due to Xue \cite{Xue99}:
\begin{itemize}
\item[($\Xi$)]  $\csr A$ is  $2$ or $\infty$ according to whether the order of $[A]$ in $K_0(A)$ is infinite or finite.
\end{itemize}
Xue's connected stable rank result has the following general stable rank analogue: 
\begin{itemize}
\item[($\Upsilon$)]  $\gsr A$ is $2$ or $\infty$ according to whether the order of $[A]$ in $K_0(A)$ is infinite or finite.
\end{itemize}
\noindent Despite the similarity between ($\Xi$) and ($\Upsilon$), neither one implies the other: ($\Upsilon$) is half weaker (the $\gsr A=2$ part) and half stronger (the $\gsr A=\infty$ part) than ($\Xi$). While ($\Xi$) is inherently topological, ($\Upsilon$) is actually valid for purely infinite, simple rings; such rings are defined and investigated in \cite{AGP}. In light of Proposition~\ref{lackIBN}, in order to justify ($\Upsilon$) it suffices to argue that $\gsr A=2$ whenever $A$ satisfies the IBN property. In other words, we have to show that, for all $m\geq 2$, if $P$ is a right $A$-module satisfying $P\oplus A\simeq A^m$ then $P\simeq A^{m-1}$. By the IBN property, both $P$ and $A^{m-1}$ are non-zero. Since for non-zero projective f.g. modules, stable isomorphism implies isomorphism - this can be read off from \cite[1.4 \& 1.5]{Cun81} and is stated explicitly in \cite[Prop.2.1]{AGP} - we are done.

To illustrate, consider the Cuntz $\Cstar$-algebras: for $n<\infty$ we have $\csr \mathcal{O}_n=\gsr \mathcal{O}_n=\infty$, whereas $\csr \mathcal{O}_\infty=\gsr \mathcal{O}_\infty=2$. 
\end{ex}

\section{Homotopy invariance}\label{hom} 
Let $A$ and $B$ be Banach algebras. Two morphisms $\phi_0,\phi_1:A\To B$ are \emph{homotopic} if they are the endpoints of a path of morphisms $\{\phi_t\}_{0\leq t\leq 1}:A\To B$; here the continuity of $t\mapsto \phi_t$ is in the pointwise sense, namely $t\mapsto\phi_t(a)$ is continuous for each $a\in A$. If there are morphisms $\phi:A\To B$ and $\psi:B\To A$ with $\psi\phi$ homotopic to $\id_A$ and $\phi\psi$ homotopic to $\id_B$, then $A$ and $B$ are said to be \emph{homotopy equivalent}. This notion generalizes the usual homotopy equivalence: two compact spaces $X$ and $Y$ are homotopy equivalent (as spaces) if and only if $C(X)$ and $C(Y)$ are homotopy equivalent (as Banach algebras).

It is clear that the topological and the Bass stable ranks are not homotopy invariant. On the other hand, the connected and the general stable ranks are homotopy invariant. For the connected stable rank, this is due to Nistor \cite[Lem.2.8]{Nis86}.

\begin{thm}\label{csrgsrhomotopy} If $A$ and $B$ are homotopy equivalent Banach algebras, then $\csr A=\csr B$ and $\gsr A=\gsr B$.
\end{thm}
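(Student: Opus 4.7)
The plan is to derive both equalities from a single functorial mechanism built on the two quotient sets $\Lg_m(A)/\GL_m(A)$ and $\pi_0(\Lg_m(A))$. A Banach algebra morphism $\phi:A\to B$ sends unimodular tuples to unimodular tuples: if $\sum b_ia_i=1$ then $\sum \phi(b_i)\phi(a_i)=1$. So pointwise application gives a continuous map $\phi:\Lg_m(A)\to\Lg_m(B)$ which is equivariant for the induced map $\phi:\GL_m(A)\to\GL_m(B)$ (and for its restriction $\GL_m^0(A)\to\GL_m^0(B)$). Hence $\phi$ descends to maps
\[
\phi_*:\Lg_m(A)/\GL_m(A)\To\Lg_m(B)/\GL_m(B),\qquad \phi_*:\pi_0(\Lg_m(A))\To\pi_0(\Lg_m(B)),
\]
functorial in $\phi$ and sending the identity of $A$ to the identity.

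The key step is homotopy invariance of these induced maps. Suppose $\phi_0,\phi_1:A\to B$ are homotopic via $\{\phi_t\}_{0\le t\le 1}$. For any $\underline{a}\in\Lg_m(A)$, the assignment $t\mapsto\phi_t(\underline{a})$ is a continuous path in $\Lg_m(B)$, since each $\phi_t$ is a unital morphism and the pointwise continuity $t\mapsto\phi_t(a_i)$ extends to tuples. This path connects $\phi_0(\underline{a})$ to $\phi_1(\underline{a})$, so these points lie in the same path component of $\Lg_m(B)$, hence in the same $\GL_m^0(B)$-orbit, and a fortiori in the same $\GL_m(B)$-orbit. Therefore $(\phi_0)_* = (\phi_1)_*$ on both quotient sets. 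If $\phi:A\to B$ is a homotopy equivalence with inverse $\psi$, functoriality and homotopy invariance give $\psi_*\phi_*=(\psi\phi)_*=(\id_A)_*=\id$ and $\phi_*\psi_*=\id$, so $\phi_*$ is a bijection on $\pi_0(\Lg_m(\cdot))$ and on $\Lg_m(\cdot)/\GL_m(\cdot)$ for every $m\geq 1$.

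Translating back to stable ranks, $\Lg_m(A)$ is connected iff $|\pi_0(\Lg_m(A))|=1$ iff $|\pi_0(\Lg_m(B))|=1$ iff $\Lg_m(B)$ is connected, so the defining condition (csr$'$) is satisfied at $m$ for $A$ iff it is satisfied at $m$ for $B$; this yields $\csr A=\csr B$. Likewise, $\GL_m(A)$ acts transitively on $\Lg_m(A)$ iff $|\Lg_m(A)/\GL_m(A)|=1$ iff the same holds with $B$ in place of $A$, and (gsr) then forces $\gsr A=\gsr B$.

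There is no genuinely hard step here; the proof is essentially the routine homotopy-invariance argument familiar from topological K-theory, packaged for the two relevant quotient sets. The only small verifications are that unimodularity propagates along the path $\phi_t(\underline{a})$ (immediate from $\phi_t(1)=1$ and multiplicativity) and that the two partitions of $\Lg_m$ really are collapsed by homotopies of morphisms — the first because it only requires $\phi_0(\underline{a})$ and $\phi_1(\underline{a})$ to be in a common $\GL_m(B)$-orbit, the second because path components are the maximal sets a continuous path cannot leave.
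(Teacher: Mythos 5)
Your proof is correct and is essentially the paper's argument in a slightly more functorial packaging: the paper proves the one-sided inequalities $\csr A\leq\csr B$ and $\gsr A\leq\gsr B$ directly from $\psi\phi\simeq\id_A$, while you record the same key fact (homotopic morphisms move a unimodular tuple within its $\GL_m^0$-orbit, because the path $t\mapsto\phi_t(\underline{a})$ stays in $\Lg_m(B)$ and the orbits are open, hence clopen) as homotopy invariance of the induced maps on $\pi_0(\Lg_m(\cdot))$ and $\Lg_m(\cdot)/\GL_m(\cdot)$ and then invoke functoriality. No gaps; the two write-ups differ only in organization.
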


\begin{proof} We claim that $\csr A\leq\csr B$ and $\gsr A\leq\gsr B$ provided $\phi:A\To B$ and $\psi:B\To A$ are morphisms such that $\psi\phi$ homotopic to $\id_A$. First, note the following: if $\underline{a}\in \Lg_m(A)$, then $\psi\phi(\underline{a})$ is in the same component as $\underline{a}$; in other words, $\psi\phi(\underline{a})$ and $\underline{a}$ are in the same $\GL^0_m(A)$-orbit.

We show that $\csr A\leq\csr B$. Let $m\geq\csr B$, and pick $\underline{a}$ and $\underline{a'}$ in $\Lg_m(A)$. Then $\phi(\underline{a})$ and $\phi(\underline{a'})$ are in $\Lg_m(B)$, so there is $\beta\in\GL^0_m(B)$ taking $\phi(\underline{a})$ to $\phi(\underline{a'})$. Sending this through $\psi$, we get $\psi(\beta)\in\GL^0_m(A)$ taking $\psi\phi(\underline{a})$ to $\psi\phi(\underline{a'})$. Hence $\underline{a}$ and $\underline{a'}$ are in the same $\GL^0_m(A)$-orbit.

We show that $\gsr A\leq\gsr B$. Let $m\geq\gsr B$, and pick $\underline{a}$ and $\underline{a'}$ in $\Lg_m(A)$. The argument runs just like the one for $\csr$, except that we get some $\beta$ in $\GL_m(B)$, rather than in $\GL^0_m(B)$, taking $\phi(\underline{a})$ to $\phi(\underline{a'})$. The conclusion is that $\underline{a}$ and $\underline{a'}$ are in the same $\GL_m(A)$-orbit.
\end{proof}

\begin{cor}\label{homtype}
The connected and the general stable ranks of $C(X)$ only depend on the homotopy type of the compact space $X$. In particular, $\csr C(X)=\gsr C(X)=1$ if $X$ is contractible. 
\end{cor}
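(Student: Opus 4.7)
The plan is to deduce both assertions from the homotopy invariance theorem (Theorem~\ref{csrgsrhomotopy}) by converting a spatial homotopy equivalence into a homotopy equivalence of the corresponding Banach algebras of continuous functions.

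First I would make the translation explicit. A continuous map $f: X \To Y$ of compact spaces induces a unital morphism $f^*: C(Y)\To C(X)$ by $f^*(h)=h\circ f$. If $f_0,f_1: X\To Y$ are homotopic through a continuous $F: X\times[0,1]\to Y$, then the formula $\phi_t(h)=h\circ F(\cdot,t)$ defines unital morphisms $\phi_t: C(Y)\To C(X)$ interpolating between $f_0^*$ and $f_1^*$. To confirm that $t\mapsto\phi_t$ is a homotopy in the sense of Section~\ref{hom}, one needs $t\mapsto\phi_t(h)$ to be norm-continuous in $C(X)$ for each fixed $h\in C(Y)$; this reduces to the uniform continuity of the composite $(x,t)\mapsto h(F(x,t))$ on the compact product $X\times[0,1]$, which is standard. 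Consequently, if $f: X\to Y$ and $g: Y\to X$ are a homotopy inverse pair of spaces, then $f^*g^*=(gf)^*$ is homotopic to $\id_{C(X)}$ and $g^*f^*=(fg)^*$ is homotopic to $\id_{C(Y)}$, so $C(X)$ and $C(Y)$ are homotopy equivalent as Banach algebras. Theorem~\ref{csrgsrhomotopy} then yields $\csr C(X)=\csr C(Y)$ and $\gsr C(X)=\gsr C(Y)$.

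For the second assertion, a contractible space $X$ is homotopy equivalent to a single point, so by the above $C(X)$ is homotopy equivalent to $\C$. Now $\Lg_n(\C)=\C^n\setminus\{0\}$, since any non-zero tuple in $\C^n$ is unimodular, and every such tuple appears as the last column of an invertible complex matrix; since $\GL_n(\C)$ is connected, it coincides with $\GL_n^0(\C)$ and acts transitively on $\Lg_n(\C)$ for every $n\geq 1$. Hence $\csr \C=\gsr \C=1$, and homotopy invariance gives $\csr C(X)=\gsr C(X)=1$.

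There is no genuine obstacle here: the whole argument is a straightforward application of Theorem~\ref{csrgsrhomotopy}, with the only non-cosmetic ingredient being the compactness-plus-uniform-continuity check that makes $t\mapsto\phi_t$ an honest homotopy of morphisms.
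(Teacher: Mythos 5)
Your proposal is correct and follows exactly the route the paper intends: the corollary is stated without proof as an immediate consequence of Theorem~\ref{csrgsrhomotopy} together with the remark in Section~\ref{hom} that spatial homotopy equivalence of compact spaces corresponds to Banach-algebra homotopy equivalence of the function algebras, plus the observation that $\sr\C=1$. Your explicit verification of the pointwise (i.e., norm-) continuity of $t\mapsto\phi_t(h)$ and the direct computation of $\csr\C=\gsr\C=1$ simply fill in details the paper leaves implicit.
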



\section{Commutative $\Cstar$-algebras}\label{SectionC(X)}
For a compact space $X$, the topological and the Bass stable ranks of $C(X)$ can be computed in terms of the (covering) dimension of $X$. As in manifold theory, we use the notation $X^d$ to indicate that $X$ is $d$-dimensional.

The following is  due to Vaserstein \cite[Thm.7]{Vas71} for the Bass stable rank, and to Rieffel \cite[Prop.1.7]{Rie83} for the topological stable rank:

\begin{thm}\label{bsrtsr from dimension} Let $X^d$ be a compact space. Then $\bsr C(X^d)=\tsr C(X^d)=\lfloor d/2 \rfloor+1$.
\end{thm}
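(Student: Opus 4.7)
The plan is to invoke Theorem~\ref{HV} (the $\Cstar$-algebra coincidence $\bsr = \tsr$) to reduce the statement to the single equality $\tsr C(X^d) = \lfloor d/2 \rfloor + 1$, and then to connect $\tsr$ to classical covering-dimension theory through the canonical identification $C(X)^n \cong C(X, \C^n) \cong C(X, \R^{2n})$. First I would translate unimodularity into a geometric condition: a tuple $(f_1,\dots,f_n)\in C(X)^n$ lies in $\Lg_n(C(X))$ iff the $f_i$ have no common zero. One direction is immediate from $\sum g_i f_i=1$; for the converse I would take a partition of unity $(\psi_i)$ subordinate to the open cover $\{f_i\neq 0\}$ and set $g_i=\psi_i\overline{f_i}/|f_i|^2$. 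Consequently $\tsr C(X)\leq n$ amounts to the statement that every continuous $f:X\to\R^{2n}$ is a uniform limit of nowhere-vanishing maps.

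The central tool is the Hurewicz--Wallman/Alexandroff characterization of covering dimension: for a compact space $X$, $\dim X\leq m$ if and only if every continuous $f:X\to\R^{m+1}$ is uniformly approximable by maps missing $0$. I would cite this as a black box. The upper bound $\tsr C(X^d)\leq \lfloor d/2\rfloor+1$ is then immediate: with $n=\lfloor d/2\rfloor+1$ we have $2n\geq d+1$, so given $f\in C(X,\R^{2n})$ one applies the characterization to the first $d+1$ coordinates of $f$, leaves the remaining coordinates fixed, and obtains a nowhere-vanishing approximation of $f$.

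For the matching lower bound, take $n=\lfloor d/2\rfloor$, so that $\dim X\geq d\geq 2n$, i.e., $\dim X\not\leq 2n-1$. The characterization, in negated form, produces some $f:X\to\R^{2n}$ that admits no uniform approximation by maps missing $0$; viewing $f$ as an element of $C(X)^n$, this says precisely that $\Lg_n(C(X))$ is not dense in a neighborhood of $f$. Hence $\tsr C(X^d)>\lfloor d/2\rfloor$, completing the computation.

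The only substantive obstacle is the dimension-theoretic characterization itself, which packages delicate general-position arguments (approximate $X$ by a $d$-dimensional polyhedron, then use that the image of such a polyhedron in $\R^{d+1}$ cannot cover a neighborhood of the origin). Everything on the Banach-algebra side is bookkeeping: Theorem~\ref{HV} does the algebraic-to-topological lifting $\bsr = \tsr$, and partition-of-unity arguments handle the dictionary between unimodular tuples and nowhere-vanishing maps.
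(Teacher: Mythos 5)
Your argument is correct in outline, but note that the paper offers no proof of this theorem at all: it is quoted from Vaserstein \cite{Vas71} (for $\bsr$) and Rieffel \cite{Rie83} (for $\tsr$), so the comparison is really with those sources. Your treatment of the $\tsr$ half is essentially Rieffel's: identify $\Lg_n(C(X))$ with the nowhere-vanishing maps $X\to\C^n\cong\R^{2n}$, then feed this into the characterization of covering dimension by unstable values ($\dim X\le m$ iff every map $X\to\R^{m+1}$ is uniformly approximable by maps missing the origin). Two small points deserve care. First, to pass from ``$\Lg_{\lfloor d/2\rfloor}(C(X))$ is not dense'' to ``$\tsr C(X)>\lfloor d/2\rfloor$'' you need the (easy, but unstated) monotonicity fact that density of $\Lg_n$ implies density of $\Lg_{n+1}$; with the paper's definition of $\tsr$ as a least $n$, non-density at a single $n$ only rules out that value. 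Second, the black box you cite is developed in \cite{HW} for separable metric spaces, whereas the theorem is stated for arbitrary compact spaces; in that generality one should invoke the Alexandroff essential-mapping characterization of covering dimension. Your handling of the $\bsr$ half via Theorem~\ref{HV} is logically legitimate here, since that theorem is quoted in the paper, but it inverts history: Vaserstein's 1971 proof that $\bsr C(X^d)=\lfloor d/2\rfloor+1$ is direct and predates the Herman--Vaserstein theorem by more than a decade. What your route buys is economy --- a single dimension-theoretic computation plus one quoted $\Cstar$-theorem --- at the cost of making the elementary-looking Bass stable rank statement rest on the nontrivial Herman--Vaserstein argument.
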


Consequently, $\csr C(X^d)\leq \lfloor d/2 \rfloor+2$ by Theorem~\ref{ord}. A better estimate, obtained by Nistor \cite[Cor.2.5]{Nis86}, is the following:

\begin{thm}\label{csrbound} Let $X^d$ be a compact space. Then $\csr C(X^d)\leq\lceil d/2 \rceil+1$.
\end{thm}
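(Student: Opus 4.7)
The plan is to identify the path components of $\Lg_m(C(X))$ with a familiar homotopy set and then invoke classical obstruction theory. A tuple $\underline{f}=(f_1,\dots,f_m)\in C(X)^m$ lies in $\Lg_m(C(X))$ precisely when the functions $f_1,\dots,f_m$ have no common zero, i.e., precisely when the associated map $X\to \C^m$, $x\mapsto (f_1(x),\dots,f_m(x))$, factors through $\C^m\setminus\{0\}$. Moreover, a path in $\Lg_m(C(X))$ joining two tuples $\underline{f},\underline{g}$ is exactly a continuous homotopy $X\times [0,1]\to \C^m\setminus\{0\}$ between the associated maps. Hence $\pi_0(\Lg_m(C(X)))$ is in natural bijection with the set of homotopy classes $[X,\C^m\setminus\{0\}]$.

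Now $\C^m\setminus\{0\}$ deformation retracts onto $S^{2m-1}$, and $S^{2m-1}$ is $(2m-2)$-connected. The classical theorem (valid for an arbitrary compact Hausdorff space $X$ of covering dimension $d$, not just for CW complexes) asserts that $[X,S^k]$ is a singleton whenever $d<k$; this is the standard extension-by-obstruction result, provable either by approximating $X$ by polyhedra via the nerves of finite open covers or by direct Čech-cohomological arguments. Applying this with $k=2m-1$, we get $[X^d,\C^m\setminus\{0\}]=\ast$ as soon as $d<2m-1$, equivalently $m\geq (d+2)/2$, equivalently $m\geq \lceil d/2\rceil +1$.

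Combining the two paragraphs: for every $m\geq \lceil d/2 \rceil +1$, the set $\Lg_m(C(X^d))$ has a single path component, hence is connected. By formulation (csr$'$) of Definition~\ref{definestableranks}, this gives exactly $\csr C(X^d)\leq \lceil d/2\rceil +1$.

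The only delicate point is the obstruction-theoretic input for a general compact space; one must make sure that the dimension/connectivity comparison indeed forces triviality of $[X^d,S^{2m-1}]$ without any CW hypothesis on $X$. This is, however, a well-known statement in dimension theory (a typical reference is Hu's \emph{Homotopy Theory}), so the bulk of the work is really the straightforward identification of $\Lg_m(C(X))$ with $C(X,\C^m\setminus\{0\})$ and the corresponding reading of its path-component set.
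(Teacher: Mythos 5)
Your argument is correct and is essentially the paper's own proof: the paper likewise identifies $\Lg_m(C(X^d))$ with $C(X^d,\C^m\setminus\{0\})$, reads connectedness as degeneracy of $[X^d,S^{2m-1}]$, and invokes the classical dimension-theoretic fact (Hurewicz--Wallman, Theorem VI.6) that $[X^d,S^k]$ is a singleton for $k>d$, with the same arithmetic $2m-1>d\iff m\geq\lceil d/2\rceil+1$. The only cosmetic difference is that the paper folds this into the proof of Theorem~\ref{csrdim} and cites Nistor for the original statement.
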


Since the connected stable rank of $C(X)$ only depends on the homotopy type of $X$, it is clear that the dimensional upper bound in the previous theorem is not necessarily attained. The following criterion appears in \cite[Prop.28]{Nic08}, with a self-contained proof, in the case of finite-dimensional CW-complexes. As indicated there, the result is true for compact metric spaces.

\begin{thm}\label{csrdim} Let $X^d$ be a compact metric space of finite dimension.

a) If $d$ is odd, then $\csr C(X^d)=\lceil d/2 \rceil+1$ if and only if $H^d(X^d)\neq 0$.

b) If $d$ is even, then $\csr C(X^d)=\lceil d/2 \rceil+1$ provided $H^{d-1}(X^d)\neq 0$.
\end{thm}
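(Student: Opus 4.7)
The plan is to translate the connected stable rank condition for $C(X)$ into a question about homotopy classes of maps to spheres, and then invoke Hopf-type classification theorems. Observe that $(f_1,\dots,f_n) \in C(X)^n$ is unimodular iff the $f_i$ have no common zero, i.e.\ iff the associated map $X \to \C^n$ avoids the origin; since $\C^n\setminus\{0\}$ deformation retracts onto $S^{2n-1}$, the path components of $\Lg_n(C(X))$ are in bijection with $[X,S^{2n-1}]$. Hence
\[
\csr C(X) = \min\bigl\{n : [X,S^{2m-1}] \text{ is a singleton for every } m \geq n\bigr\}.
\]
In view of the upper bound $\csr C(X^d) \leq \lceil d/2 \rceil + 1$ of Theorem~\ref{csrbound}, equality at $\lceil d/2\rceil+1$ reduces to showing that $[X, S^{d}]$ is non-trivial when $d$ is odd, respectively $[X,S^{d-1}]$ is non-trivial when $d$ is even.

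For part (a) the target sphere $S^d$ has the same dimension as $X$, and I would apply the Hopf classification theorem as extended to compact Hausdorff spaces by Huber: for such $X$ of covering dimension $n$, the natural map $[X,S^n] \to H^n(X;\Z)$ sending a map to the pullback of the fundamental class is a bijection (here $H$ denotes \v{C}ech cohomology). Taking $n = d$ yields $[X,S^d] = *$ iff $H^d(X) = 0$, and combining with the upper bound gives the biconditional in~(a).

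For part (b) the target $S^{d-1}$ has dimension strictly less than $X$, so Hopf's theorem no longer directly gives a classification: a priori, classes in $H^{d-1}(X;\Z)$ need not be realized by continuous maps. To circumvent this I would invoke Steenrod's classification theorem, which for $d - 1 \geq 3$ provides the exact sequence of pointed sets
\[
H^{d}(X;\Z/2) \to [X,S^{d-1}] \to H^{d-1}(X;\Z) \xrightarrow{\mathrm{Sq}^2 \rho_2} H^{d+1}(X;\Z/2).
\]
The rightmost group vanishes because $\dim X = d$, so the pullback map $[X,S^{d-1}] \to H^{d-1}(X;\Z)$ is surjective and any non-zero class in $H^{d-1}(X)$ lifts to a non-null map $X \to S^{d-1}$. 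Combined with the upper bound, this forces $\csr C(X) = \lceil d/2\rceil+1$. The low-dimensional case $d = 2$ is handled directly by the identification $[X,S^1] = H^1(X;\Z)$.

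The main obstacle is that the Hopf and Steenrod classification theorems are most conveniently stated for finite CW complexes, whereas here $X$ is only a compact metric space of finite covering dimension. I would bridge this gap by representing $X$ as the inverse limit of finite polyhedra $P_i$ of dimension $\leq d$ arising as nerves of progressively finer open covers; since \v{C}ech cohomology commutes with such limits, a non-zero class in $H^{d-1}(X)$ is detected on some $P_i$, where the CW version of the relevant classification theorem applies verbatim and produces a non-null map $P_i \to S^{d-1}$ (respectively $P_i \to S^d$ in part (a)). The composition $X \to P_i \to S^{d-1}$ then pulls the generator of $H^{d-1}(S^{d-1})$ back to the preassigned non-zero class on $X$, and is therefore non-null.
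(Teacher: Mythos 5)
Your proposal is correct and follows essentially the same route as the paper: the same identification of $\Lg_m(C(X))$ with maps to $\C^m\setminus\{0\}$, the same reduction (via the dimensional upper bound) to the non-degeneracy of $[X^d,S^d]$ in the odd case and $[X^d,S^{d-1}]$ in the even case, and the same Hopf-type classification facts at the end. The only difference is one of sourcing: the paper cites Hurewicz--Wallman directly for these facts in the compact metric setting, whereas you rederive them from the CW versions of the Hopf and Steenrod classification theorems via an inverse limit of finite polyhedra, which works equally well.
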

Roughly speaking, this theorem says that the connected stable rank of $C(X^d)$ attains its dimensional upper bound as soon as the top cohomology group in $H^{\textrm{odd}}(X^d)$ is non-vanishing. Let us point out that the cohomology is taken in the \v{C}ech sense, and with integer coefficients.

\begin{proof}
$\Lg_m C(X^d)$ can be identified with the space of continuous maps from $X^d$ to $\C^m\setminus \{0\}$. Hence, $\Lg_m C(X^d)$ is connected if and only if  the set of homotopy classes $[X^d,\C^m\setminus \{0\}]=[X^d,S^{2m-1}]$ degenerates to a singleton. It follows that $\csr C(X^d)$ is the least $n\geq 1$ such that $[X^d,S^{2m-1}]$ degenerates for all $m\geq n$. This reformulation has several consequences. First, we see once again that $\csr C(X^d)$ only depends on the homotopy type of $X^d$. Second, we get a direct and conceptual proof of Theorem~\ref{csrbound}: we have $\csr C(X^d)\leq\lceil d/2 \rceil+1$ because, for $m\geq \lceil d/2 \rceil+1$, we get $2m-1> d$ hence $[X^d,S^{2m-1}]$ degenerates (\cite{HW}, Theorem VI.6 on p.88). Finally, we have $\csr C(X^d)=\lceil d/2 \rceil+1$ if and only if $[X^d,S^{2m-1}]$ is non-degenerate for $m=\lceil d/2 \rceil$. According to whether $d$ is odd or even, the latter condition amounts to $[X^d,S^d]$, respectively $[X^d,S^{d-1}]$, being non-degenerate. The proof is completed as soon as we recall that $H^d(X^d)\neq 0$ if and only if $[X^d, S^d]$ is non-degenerate (\cite{HW}, corollary on p.150), and that  $H^{d-1}(X^d)\neq 0$ implies $[X^d, S^{d-1}]$ is non-degenerate (\cite{HW}, Corollary 1 on p.149). \end{proof}

Theorem~\ref{csrdim} applies to many familiar spaces (e.g., the tori $T^d$). However, Theorem~\ref{csrdim} is not exhaustive: for instance, it does not apply directly to even-dimensional spheres. To cover this case, we revisit the proof of Theorem~\ref{csrdim} at the point where homotopy was still involved. For even $d$, we have seen that $\csr C(S^d)=\lceil d/2 \rceil+1$ if and only if $[S^d,S^{d-1}]$ is non-degenerate. Since $\pi_d(S^{d-1})$ vanishes for $d=2$ only, we conclude:

\begin{ex}\label{csrspheres}
\begin{displaymath}
\csr C(S^d)=\left\{\begin{array}{ll}
\lceil d/2 \rceil+1 &  \textrm{ if $d\neq 2$}\\
1 &  \textrm{  if $d=2$}
\end{array}
\right.
\end{displaymath}
\end{ex}

The computation of the general stable rank of $C(X)$ is much more complicated than the computation of its connected stable rank. There are properties of a compact space $X$ - contractibility (Corollary~\ref{homtype}), or low dimensionality (Proposition~\ref{csrlowdim}) - which guarantee that $\gsr C(X)=1$. Other computations of general stable ranks, particularly those yielding higher values, are harder to provide. The following result is the first non-trivial computation of this kind:

\begin{prop}\label{gsrspheres} The general stable rank of $C(S^d)$ is given as follows:
\begin{displaymath}
\gsr C(S^d)=\left\{\begin{array}{ll}
\lceil d/2 \rceil+1 &  \textrm{ if $d>4$ and $d\notin 4\Z$}\\
\lceil d/2 \rceil &  \textrm{ if $d>4$ and $d\in 4\Z$}\\
1 &  \textrm{ if $d\leq 4$}
\end{array}
\right.
\end{displaymath}
\end{prop}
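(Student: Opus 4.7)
The plan is to translate the question, via Serre--Swan and the clutching construction, into one about injectivity of stabilization maps between homotopy groups of unitary groups, and then to invoke classical computations of $\pi_\ast(U(n))$.

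First, using the equivalence $(\gsr) \Leftrightarrow (\gsr')$ together with the Serre--Swan correspondence between finitely generated projective right $C(S^d)$-modules and complex vector bundles on $S^d$, the quantity $\gsr C(S^d)$ becomes the least integer $n \geq 1$ such that, for every $m \geq n$, every rank-$(m-1)$ bundle $E$ on $S^d$ with $E \oplus (S^d \times \C) \cong S^d \times \C^m$ is itself trivial. By clutching, rank-$r$ bundles on $S^d$ are classified by $\pi_{d-1}(U(r))$, and stable triviality is exactly the vanishing of the image in $\pi_{d-1}(U)$. Hence
\[
\gsr C(S^d) \;=\; \min\bigl\{\, n : \pi_{d-1}(U(m-1)) \to \pi_{d-1}(U) \text{ is injective for every } m \geq n \,\bigr\}.
\]

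Second, the upper bound $\gsr C(S^d) \leq \lceil d/2 \rceil + 1$ follows from the unitary stable range: from the fibration $U(r) \to U(r+1) \to S^{2r+1}$, the inclusion induces an isomorphism on $\pi_i$ for $i \leq 2r-1$, so $\pi_{d-1}(U(m-1)) \to \pi_{d-1}(U)$ is already an isomorphism as soon as $m \geq \lceil d/2 \rceil + 1$. For $d \leq 4$ I would check the needed injectivity level by level using small-dimensional facts: $\pi_{d-1}(U(r)) = 0$ for $d \in \{1,3\}$ and every $r$; $\pi_1(U(1)) = \Z = \pi_1(U)$ for $d = 2$; and for $d = 4$ the only unstable case is $m = 3$, where $\pi_3(U(2)) \cong \pi_3(S^3) = \Z$ maps isomorphically to $\pi_3(U) = \Z$ via the Hopf-type fibration $U(1) \to U(2) \to S^3$.

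Finally, for $d \geq 5$ the only level not handled by the stable range is the critical one $m = \lceil d/2 \rceil$, and there the long exact sequence of $U(m-1) \to U(m) \to S^{2m-1}$ identifies the kernel of $\pi_{d-1}(U(m-1)) \to \pi_{d-1}(U)$ with the image of the connecting map $\partial : \pi_d(S^{2m-1}) \to \pi_{d-1}(U(m-1))$. If $d = 2k+1$ is odd with $k \geq 2$, Bott periodicity gives $\pi_{2k}(U) = 0$, so the entire group $\pi_{2k}(U(k)) \cong \Z/k!$ sits in the kernel at $m = k+1$; since $k! \geq 2$, injectivity fails and $\gsr C(S^d) = \lceil d/2 \rceil + 1$. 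If $d = 2k$ is even with $k \geq 3$, the stable-range map $\pi_{2k-1}(U(k)) \to \pi_{2k-1}(U) = \Z$ is already an isomorphism, so the kernel at $m = k$ is exactly the image of $\partial : \pi_{2k}(S^{2k-1}) = \Z/2 \to \pi_{2k-1}(U(k-1))$. Writing $\chi \in \pi_{2k-2}(U(k-1))$ for the classifying class of the principal bundle $U(k) \to S^{2k-1}$, this $\partial$ sends the generator $\eta$ to the composite $\chi \circ \eta$. The decisive fact is that $\chi \circ \eta$ vanishes in $\pi_{2k-1}(U(k-1))$ precisely when $k$ is even; this yields $\gsr = k$ when $d \in 4\Z$ and $\gsr = k+1$ when $d \equiv 2 \pmod 4$.

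The main obstacle is precisely this parity-sensitive vanishing of $\chi \circ \eta$. It is a classical but delicate computation in the unstable homotopy of $U(k-1)$, relying on Bott's and Toda's work on $\pi_\ast(U(n))$ and on the $\eta$-action between unstable unitary homotopy groups; it is the only step of the argument that genuinely uses homotopy-theoretic information beyond the formal machinery of clutching and stable ranges.
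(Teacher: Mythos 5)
Your overall strategy is essentially the paper's: reduce via Serre--Swan and clutching to the injectivity of stabilization maps on $\pi_{d-1}$ of unitary groups, dispose of $m\geq\lceil d/2\rceil+1$ by the stable range, settle $d\leq 4$ by inspection, and detect failures of injectivity at unstable levels using classical computations ($\pi_{2k}\U(k)\simeq\Z_{k!}$ for odd $d$, and the parity-dependent $\pi_{2k-1}\U(k-1)$ for even $d$). Your reformulation in terms of the full stabilization map $\pi_{d-1}\U(m-1)\to\pi_{d-1}\U$ rather than the one-step map is harmless, because under the quantifier ``for all $m\geq n$'' the two injectivity conditions are equivalent; and your identification $\partial(\eta)=\chi\circ\eta$ together with the ``decisive fact'' is just a repackaging of the tabulated value $\pi_{2k-1}\U(k-1)\simeq\Z_2$ ($k$ odd) or $0$ ($k$ even).

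There is, however, a genuine gap in the case $d>4$, $d\in 4\Z$. Write $d=2k$ with $k\geq 4$ even. Your assertion that ``the only level not handled by the stable range is $m=\lceil d/2\rceil$'' is false: every $m\leq k$ lies below the stable range. Showing that the kernel at $m=k$ vanishes (i.e., that $\pi_{2k-1}\U(k-1)\to\pi_{2k-1}\U(k)$ is injective) only yields the upper bound $\gsr C(S^{2k})\leq k$. To conclude equality you must also exhibit a failure of injectivity at $m=k-1$, that is, a nonzero element of $\ker\bigl(\pi_{2k-1}\U(k-2)\to\pi_{2k-1}\U(k-1)\bigr)$. Since $\pi_{2k-1}\U(k-1)=0$ for $k$ even, this amounts to proving $\pi_{2k-1}\U(k-2)\neq 0$ -- a further piece of unstable homotopy input that your argument never invokes. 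The paper supplies it from Lundell's tables: $\pi_{2k-1}\U(k-2)$ contains a cyclic direct summand of order $\gcd(k-2,8)$, which is at least $2$ because $k$ is even. Without this step your argument leaves open any value of $\gsr C(S^{2k})$ from $1$ up to $k$ in the $4\Z$ case; the other two cases ($d$ odd and $d\equiv 2 \pmod 4$) are complete as written, since there the failure occurs at the top unstable level and matches the general upper bound.
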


\begin{proof}
Let $X$ be a compact space. By (gsr$'$) of Definition~\ref{definestableranks}, $\gsr C(X)$ is the least $n\geq 1$ with the following property: for all $m\geq n$, if $P$ is a right $C(X)$-module satisfying $P\oplus C(X)\simeq C(X)^m$ then $P\simeq C(X)^{m-1}$. Via the Serre-Swan dictionary, we can translate this algebraic description into a geometric one involving complex vector bundles. Namely, $\gsr C(X)$ is the least integer $n$ with the following property: for all $m\geq n$, if $E$ is an $(m-1)$-dimensional vector bundle over $X$ which is trivialized by adding a $1$-dimensional vector bundle over $X$, then $E$ is trivial. 

Recall that there is a bijective correspondence 
\[\big[\mathrm{Vect}_n(SX)\big]\longleftrightarrow \big[X,\GL_n(\C)\big]\] 
between the isomorphism classes of $n$-dimensional complex vector bundles over the suspension $SX$ and the homotopy classes of continuous maps $X\to \GL_n(\C)$ (see \cite[p.36]{Kar78}). This correspondence is implemented by clutching. View $SX$ as the union of two cones over $X$, denoted $X_+$ and $X_-$. On $X_+$, respectively $X_-$, take the trivial vector bundle $X_+\times \C^n$, respectively $X_-\times \C^n$. We glue these trivial bundles along $X=X_+\cap X_-$ by a continuous map $f:X\To \GL_n(\C)$; specifically, the two copies of $\C^n$ above each $x\in X$ get identified by the linear isomorphism $f(x)$. We thus have an $n$-dimensional vector bundle $E_f$ over $SX$ for each continuous map $f:X\To \GL_n(\C)$. Up to isomorphism,  each $n$-dimensional vector bundle over $SX$ arises in this way. Indeed, as $X_+$ and $X_-$ are contractible, every vector bundle over $X$ restricts to trivial vector bundles over $X_+$ and $X_-$; thus all that matters is the way these two trivial vector bundles fit together over $X$. 

Furthermore, the direct sum of bundles obtained by clutching behaves as expected (\cite[p.136]{Hus94}): 
\[E_f\oplus E_g\simeq E_{\bigl(\begin{smallmatrix} f & 0\\0 & g\end{smallmatrix}\bigr)}\]
Therefore, if $E_f$ is the $n$-dimensional bundle determined by $f: X\to\GL_n(\C)$, then $E_f$ is trivial if and only if $f$ vanishes in $\big[X,\GL_n(\C)\big]$, and $E_f$ is stably trivial if and only if $f$ vanishes in $\big[X,\GL_m(\C)\big]$ for some $m\geq n$. To put it differently, there is a bijective correspondence between non-zero elements in the kernel of $\big[X,\GL_n(\C)\big]\to \big[X,\GL_{n+1}(\C)\big]$, and non-trivial $n$-dimensional vector bundles which become trivial after adding a $1$-dimensional vector bundle.

To summarize, $\gsr C(SX)$ is the least $n\geq 1$ such that $\big[X,\GL_{m-1}(\C)\big]\to \big[X,\GL_m(\C)\big]$ is injective for all $m\geq n$.

Now we let $X=S^*$ be a sphere, and we recall that the unitary group $\U(n)$ is a deformation retract of $\GL_n(\C)$. Then $\gsr C(S^{*+1})$ is the least $n\geq 1$ for which $\pi_*\U(m-1)\to \pi_*\U(m)$ is injective for all $m\geq n$. Let us also recall at this point that the long exact homotopy sequence associated to the fibration $\U(n)\to \U(n+1)\to S^{2n+1}$ yields that $\pi_*\U(n)\to \pi_*\U(n+1)$ is bijective for $n>*/2$. 

When $*\leq 3$, one easily checks that  $\pi_*\U(m-1)\to \pi_*\U(m)$ is injective for all $m\geq 1$. Therefore $\gsr C(S^d)=1$ for $d\leq 4$ (compare Proposition~\ref{csrlowdim} below).

Assume $*\geq 4$. In order to see what happens right before the stable range $n>*/2$, we use some computations of homotopy groups of unitary groups as tabulated in \cite[p.254]{Lun}. We split the analysis according to the parity of $*$:

(even $*$) Put $*=2k$ with $k\geq 2$. The sequence of homotopy groups $\{\pi_{2k}\U(n)\}_{n\geq 1}$ stabilize starting from $\pi_{2k}\U(k+1)$, and $\pi_{2k}\U(k+1)\simeq\pi_{2k}\U(\infty)\simeq 0$ by Bott periodicity. The last unstable group is $\pi_{2k}\U(k)\simeq\Z_{k!}$, so the map $\pi_{2k}\U(k)\to \pi_{2k}\U(k+1)$ is not injective. Thus $\gsr C(S^{2k+1})=k+2$ for $k\geq 2$.

(odd $*$) Put $*=2k+1$ with $k\geq 2$. The sequence of homotopy groups $\{\pi_{2k+1}\U(n)\}_{n\geq 1}$ stabilize starting from $\pi_{2k+1}\U(k+1)$, and $\pi_{2k+1}\U(k+1)\simeq\pi_{2k+1}\U(\infty)\simeq \Z$ by Bott periodicity. The last unstable group is
\begin{displaymath}
\pi_{2k+1}\U(k)\simeq \left\{\begin{array}{ll}
\Z_2 &  \textrm{ if $k$ is even,}\\
0 &  \textrm{  if $k$ is odd.}
\end{array}
\right.
\end{displaymath}
If $k$ is even, then $\pi_{2k+1}\U(k)\to \pi_{2k+1}\U(k+1)$ is not injective. Therefore $\gsr C(S^{2k+2})=k+2$ for even $k\geq 2$. If $k$ is odd, we must look at the map $\pi_{2k+1}\U(k-1)\to \pi_{2k+1}\U(k)$ in order to see the failure of injectivity: indeed, $\pi_{2k+1}\U(k-1)$ has a cyclic group of order $\gcd(k-1,8)$ (which is not 1, since $k$ is odd) as a direct summand. Thus $\gsr C(S^{2k+2})=k+1$ for odd $k\geq 2$.
\end{proof}

\begin{rem} A related, but incomplete, discussion along these lines is carried out by Sheu in \cite[pp.369--370]{She87}; in particular, the previous proposition confirms his conjectural Remark on p.370. We also point out that the homotopical information used in the proof of Proposition~\ref{gsrspheres} quickly leads to an exact computation of the constant $C_G$ which appears in the main theorem (5.4) of \cite{She87}.
\end{rem}

\begin{prop}\label{csrlowdim} Let $X^d$ be a compact space. If $d\leq 4$, then $\gsr C(X^d)=1$.
\end{prop}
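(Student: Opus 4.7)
The plan is to apply the Serre-Swan correspondence exactly as in the proof of Proposition~\ref{gsrspheres}, so that $\gsr C(X^d)=1$ becomes the geometric assertion that, for every $m\geq 1$, any complex vector bundle $E$ of rank $m-1$ over $X^d$ with $E\oplus \mathbf{1}\simeq \mathbf{1}^m$ is itself trivial.

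I would handle the small-rank cases directly. The case $m=1$ is vacuous. For $m=2$ the bundle $E$ is a line bundle; additivity of the first Chern class gives $c_1(E)=c_1(E\oplus \mathbf{1})=0$, and since complex line bundles over $X$ are classified by $c_1\in H^2(X;\Z)$, we conclude that $E$ is trivial.

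For $m\geq 3$, setting $r:=m-1\geq 2$, I invoke the stable range for complex vector bundles. From the fibration $\U(r)\To \U(r+1)\To S^{2r+1}$, iterated, $\pi_k\U(r)\To \pi_k\U$ is an isomorphism for $k\leq 2r-1$; combined with Bott periodicity (in particular $\pi_{2r}\U=0$), this makes $B\U(r)\To B\U$ a $\pi_k$-isomorphism for $k\leq 2r$, so its homotopy fibre is $2r$-connected. Standard obstruction theory then shows that the stabilization map $[X^d,B\U(r)]\To[X^d,B\U]$ is injective as soon as $d\leq 2r$. Under our hypotheses $d\leq 4\leq 2r$, so the classifying map of $E$, being null-homotopic in $B\U$, is already null-homotopic in $B\U(r)$; hence $E$ is trivial.

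The main subtlety I anticipate is that the statement concerns an arbitrary compact space $X^d$ of covering dimension $d$, rather than a CW complex, so the obstruction-theoretic step in the third paragraph needs to be set up appropriately. This is the same issue encountered in Theorem~\ref{csrdim}, and I would resolve it the same way: by appealing to \v{C}ech-type obstruction theory, or by approximating $X^d$ by a CW-pair of the same dimension. Once this groundwork is in place, the argument above applies verbatim.
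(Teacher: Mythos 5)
Your argument is correct, but it takes a genuinely different route from the paper. The paper's proof is a two-line deduction from results already in hand: by Theorem~\ref{csrbound}, $d\leq 4$ gives $\csr C(X^d)\leq 3$, so $\GL_m$ acts transitively on $\Lg_m$ for $m\geq 3$; and for $m=1,2$ transitivity is automatic over any \emph{commutative} ring (a unimodular pair $(a_1,a_2)$ with $b_1a_1+b_2a_2=1$ is a column of the determinant-one matrix $\bigl(\begin{smallmatrix} a_1 & -b_2\\ a_2 & b_1\end{smallmatrix}\bigr)$). You instead run the Serre--Swan translation of (gsr$'$) and verify triviality of stably trivial bundles directly: $c_1$ for line bundles ($m=2$) and the stable range $d\leq 2r$ via the connectivity of $B\U(r)\to B\U$ for $m\geq 3$; your homotopy-theoretic bookkeeping is accurate, and your separate treatment of $m=2$ is exactly the case the stable range misses when $d=3,4$. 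The trade-offs: the paper's argument is shorter, works verbatim for any commutative Banach algebra with connected stable rank at most $3$, and silently delegates all dimension-theoretic subtleties about general compact spaces to Nistor's Theorem~\ref{csrbound}; your argument is self-contained on the topological side and makes transparent where the threshold $d\leq 4$ comes from, but it must confront the CW-versus-compact-space issue head-on. That issue is real but resolvable as you indicate (Freudenthal/Marde\v{s}i\'{c}-type inverse-limit approximation by complexes of the same dimension, plus continuity of $\mathrm{Vect}$), and the paper itself invokes the same circle of ideas in Sections~5 and~13, so your proposed fix is the standard one.
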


\begin{proof} If $d\leq 4$ then $\csr C(X^d)\leq 3$, by Theorem~\ref{csrbound}. Therefore $\GL_n(C(X^d))$ acts transitively on $\Lg_n(C(X^d))$ for $n\geq 3$. That $\GL_n(C(X^d))$ acts transitively on $\Lg_n(C(X^d))$ for $n=1,2$ is automatic by the commutativity of $C(X^d)$. We conclude that $\gsr C(X^d)=1$. \end{proof}

Sadly, we do not know the answer to the following:

\begin{prob}\label{gsrtori}
Compute $\gsr C(T^d)$.
\end{prob}

To get a sense of why this problem is much more challenging than the computation of $\gsr C(S^d)$, one need only glance at the proof of Packer and Rieffel \cite{PR03} that $\gsr C(T^5)>1$.

\section{The Gelfand transform} \label{Gelfand section}
We remain in the commutative case, and we consider the transfer of stable rank information across the Gelfand transform. This discussion owes much to Taylor's papers \cite{Tay75}, \cite{Tay76}.

For a unital commutative Banach algebra $A$, the \emph{maximal ideal space} $X_A$ is the set of characters of $A$. Equipped with the topology of pointwise convergence, $X_A$ is a compact Hausdorff space. The \emph{Gelfand transform} of $A$ is the unital, continuous morphism $A\To C(X_A)$ given by $a\mapsto \hat{a}$, where $\hat{a}\in C(X_A)$ denotes the evaluation at $a\in A$. The fundamental feature of the Gelfand transform is the fact that it is spectrum-preserving: $\spec_{C(X_A)} (\hat{a})=\spec_A (a)$ for all $a\in A$. 

Strong relations between the structural properties of $A$ and those of $C(X_A)$ can be established across the Gelfand transform. Early results of Shilov and Arens led Novodvorskii \cite{Nov67} to the following important theorem: the Gelfand transform $A\To C(X_A)$ induces an isomorphism $K_*(A)\to K_*(C(X_A))$. 

For the homotopical stable ranks, we have:

\begin{thm}\label{Gelfand csrgsr}
Let $A\To C(X_A)$ be the Gelfand transform. Then $\csr A= \csr C(X_A)$ and $\gsr A=\gsr C(X_A)$.
\end{thm}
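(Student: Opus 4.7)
The plan is to exploit that $\gamma:A\To C(X_A)$ is spectrum-preserving between commutative Banach algebras, even though $\gamma$ need not have dense image. The easy lifting facts come first. Because $A$ and $C(X_A)$ are commutative, matrix invertibility is detected by the determinant, so applying spectrum-preservation to $\det(\alpha)$ gives $\alpha\in\GL_n(A)$ if and only if $\gamma(\alpha)\in\GL_n(C(X_A))$. Similarly, every proper ideal in a commutative unital Banach algebra lies in the kernel of some character, which shows $\underline{a}\in\Lg_m(A)$ if and only if $\gamma(\underline{a})\in\Lg_m(C(X_A))$. Consequently $\gamma$ restricts to continuous maps $\Lg_m(A)\To\Lg_m(C(X_A))$ and $\GL_n(A)\To\GL_n(C(X_A))$.

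Next I would repackage the stable-rank conditions as statements on orbit sets. By the remark following Definition~\ref{definestableranks}, the action of $\GL_m^0(A)$ on $\Lg_m(A)$ has open orbits; since these orbits are also connected and partition $\Lg_m(A)$, they coincide with the (path-)components. So $\pi_0\Lg_m(A)=\Lg_m(A)/\GL_m^0(A)$, and the set of $\GL_m(A)$-orbits on $\Lg_m(A)$ is the quotient $\pi_0\Lg_m(A)/\pi_0\GL_m(A)$. The same identifications hold for $C(X_A)$ in place of $A$.

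The heart of the proof, and the main obstacle, is to show that $\gamma$ induces bijections
\[\pi_0\Lg_m(A)\isomto \pi_0\Lg_m(C(X_A))\quad\textrm{and}\quad \pi_0\GL_n(A)\isomto \pi_0\GL_n(C(X_A))\]
for every $m,n\geq 1$. This is the unstable refinement of Novodvorskii's $K$-theoretic isomorphism, and it is where Taylor's papers \cite{Tay75, Tay76} enter essentially. The central tool is holomorphic functional calculus applied to joint spectra of generating families of $A$: an Oka--Weil / polynomially-convex-hull argument shows that any $(f_1,\dots,f_m)\in\Lg_m(C(X_A))$ can be deformed within $\Lg_m(C(X_A))$ to some $(\hat a_1,\dots,\hat a_m)$ with $(a_i)\in\Lg_m(A)$, yielding surjectivity on $\pi_0$; a parametric version of the same deformation gives injectivity. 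The $\GL_n$-statement follows along the same lines.

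Granting the bijections, the conclusions are immediate. For $\csr$, connectedness of $\Lg_m$ is detected by $\pi_0$, so $\csr A=\csr C(X_A)$ by comparing the defining conditions term by term. For $\gsr$, transitivity of $\GL_m(A)$ on $\Lg_m(A)$ is the condition that $\pi_0\Lg_m(A)/\pi_0\GL_m(A)$ be a singleton; the $\pi_0$-bijections are $\pi_0\GL_m$-equivariant, so the singleton condition transfers across $\gamma$, giving $\gsr A=\gsr C(X_A)$.
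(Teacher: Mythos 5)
Your proposal is correct, and it is worth separating its two halves. For the connected stable rank you do exactly what the paper does: both arguments reduce to the statement that the Gelfand transform induces bijections $\pi_0(\Lg_m(A))\to\pi_0(\Lg_m(C(X_A)))$, which the paper simply attributes to Novodvorskii \cite{Nov67} and which you re-derive in outline via the Oka--Weil/Arens--Calder\'on deformation; since connectedness of $\Lg_m$ is a $\pi_0$-statement, the equality of connected stable ranks follows at once either way. For the general stable rank your route is genuinely different. The paper invokes the Forster--Taylor theorem that $\gamma$ induces a monoid isomorphism $\mathcal{P}(A)\to\mathcal{P}(C(X_A))$ and then reads off the conclusion from the module-cancellation form (gsr$'$) of the definition. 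You instead stay with the orbit form (gsr): you identify the $\GL_m^0$-orbits with the components of $\Lg_m$ (correct, by the open-orbit remark after Definition~\ref{definestableranks}), write the $\GL_m$-orbit set as $\pi_0\Lg_m/\pi_0\GL_m$, and transfer transitivity across the two $\pi_0$-bijections, whose equivariance is automatic since both are induced by $\gamma$. The extra input you need --- that $\pi_0(\GL_n(A))\to\pi_0(\GL_n(C(X_A)))$ is bijective at \emph{each finite level} $n$, not merely stably on $K_1$ --- is precisely Novodvorskii's theorem, with the $\Lg_n$-version supplied by Taylor \cite{Tay76}, so you could cite these rather than sketch the deformation argument. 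What each approach buys: the paper's proof is shorter given Forster--Taylor and generalizes immediately to any setting where $\mathcal{P}$ is preserved, while yours is more uniform (csr and gsr are handled by one orbit-theoretic mechanism) and is closer in spirit to the proof of Theorem~\ref{Swancsrgsr} later in the paper, where the same "last column of an invertible matrix" bookkeeping is used. Your preliminary reductions (detecting $\GL_n$ via the determinant and spectrum-preservation, and detecting unimodularity via common zeros of the Gelfand transforms) are both correct.
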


The equality of connected stable ranks in the above theorem is a consequence of Novodvorskii's results from \cite{Nov67}, which imply that the Gelfand transform $A\To C(X_A)$ induces, for all $n\geq 1$, a bijection $\pi_0(\Lg_n(A))\to \pi_0(\Lg_n(C(X_A)))$. For the equality of general stable ranks, one can appeal to the following result of Forster \cite[Thm.6]{For74} and Taylor \cite[Thm.6.8]{Tay76}: the Gelfand transform $A\To C(X_A)$ induces a monoid isomorphism $\mathcal{P}(A)\to\mathcal{P}(C(X_A))$. By (gsr$'$) of Definition~\ref{definestableranks}, where the general stable rank is defined in terms of a cancellation property for projective modules, we immediately obtain the desired equality of general stable ranks.

Passing to the dimensional stable ranks, we have the following theorem of Corach and Larotonda \cite[Thm.8]{CL84}:
\begin{thm}\label{Gelfand bsr}
Let $A\To C(X_A)$ be the Gelfand transform. Then $\bsr A\leq \bsr C(X_A)$.
\end{thm}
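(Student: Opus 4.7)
The plan is to carry the Bass reduction from $C(X_A)$ back to $A$, using that the Gelfand transform detects unimodularity exactly. The basic ingredient is a \emph{unimodularity transfer} principle: for a commutative unital Banach algebra $A$, a tuple $(c_1,\dots,c_k)\in A^k$ lies in $\Lg_k(A)$ if and only if $(\hat c_1,\dots,\hat c_k)\in \Lg_k(C(X_A))$. Indeed, unimodularity in $A$ is equivalent to the tuple not being contained in any maximal ideal of $A$; since the maximal ideals of $A$ are the kernels of the characters, i.e.\ the points of $X_A$, this is the same as the functions $\hat c_i$ having no common zero on $X_A$.

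Set $n=\bsr C(X_A)$ and take $(a_1,\dots,a_{n+1})\in \Lg_{n+1}(A)$. Unimodularity transfer gives $(\hat a_1,\dots,\hat a_{n+1})\in \Lg_{n+1}(C(X_A))$, and by hypothesis there exist $f_1,\dots,f_n\in C(X_A)$ with $(\hat a_i+f_i\hat a_{n+1})_{i=1}^n\in \Lg_n(C(X_A))$. It suffices to produce $x_1,\dots,x_n\in A$ satisfying $(\hat a_i+\hat x_i\hat a_{n+1})_{i=1}^n\in \Lg_n(C(X_A))$, because a second application of transfer will return $(a_i+x_ia_{n+1})_{i=1}^n\in \Lg_n(A)$. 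Equivalently, the open nonempty set $U=\{(y_1,\dots,y_n)\in C(X_A)^n : (\hat a_i+y_i\hat a_{n+1})_i\in \Lg_n(C(X_A))\}$ must be shown to meet $\hat A^n$.

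This is the main obstacle and the technical heart of the matter, because $\hat A$ is typically a proper, non-self-adjoint subalgebra of $C(X_A)$, so the $f_i$ cannot simply be approximated entrywise by Gelfand transforms via Stone--Weierstrass. My plan is to localize: by unimodularity of the full $(n+1)$-tuple, $\max_i |\hat a_i|\geq c$ on $X_A$ for some $c>0$; on the neighborhood $W=\{|\hat a_{n+1}|<c/2\}$ of $Z=\{\hat a_{n+1}=0\}$ one automatically has $\max_{i\leq n}|\hat a_i|\geq c$, and this forces $(\hat a_i+y_i\hat a_{n+1})_{i\leq n}$ to be non-vanishing throughout $W$ for any $y_i$ with $\|y_i\|_\infty\leq 1$. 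So the condition is effective only on the complementary compact set $K=\{|\hat a_{n+1}|\geq c/2\}$, where $\hat a_{n+1}$ is uniformly bounded away from zero and the task reduces to finding $\hat x_i$ such that $(\hat x_i+\hat a_i/\hat a_{n+1})_{i\leq n}$ has no common zero on $K$. For this I would invoke the Arens--Calder\'on joint holomorphic functional calculus for $(a_1,\dots,a_{n+1})$: on the portion of the joint spectrum $\hat a(X_A)\subseteq\C^{n+1}$ lying in $\{w_{n+1}\neq 0\}$ the functions $-w_i/w_{n+1}$ are holomorphic, and functional calculus should supply elements of $\hat A$ that approximate $-\hat a_i/\hat a_{n+1}$ closely enough on $K$ to land inside $U$ by openness.

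The reason this $\bsr$ inequality requires more care than the analogous $\csr$ and $\gsr$ equalities of Theorem~\ref{Gelfand csrgsr} is that the $\bsr$ condition is a pointwise algebraic requirement rather than a homotopy-theoretic one, so the Novodvorskii $\pi_0$-isomorphism and the Forster--Taylor monoid isomorphism on $\mathcal{P}(A)$ do not directly transport the relevant datum. I expect the hardest step to be the verification that the functional-calculus approximation above can be arranged simultaneously for all $n$ components while preserving the $n$-tuple non-vanishing condition on $K$.
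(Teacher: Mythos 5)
Your first two steps are sound and are the standard starting point: the unimodularity-transfer lemma is correct (a tuple generates $A$ as an ideal iff it lies in no maximal ideal iff the Gelfand transforms have no common zero), and the problem is correctly reduced to showing that the open set $U$ meets $\hat{A}^n$. You have also correctly located the obstacle, namely that $\hat{A}$ need not be dense in $C(X_A)$. For context: the paper itself gives no proof and quotes the result from Corach--Larotonda \cite{CL84}; their argument handles this obstacle by an Arens--Calder\'on reduction of the joint spectrum followed by an Oka-type holomorphic approximation of the continuous solution $(f_i)$ itself, which is precisely the machinery your sketch tries to substitute with something more elementary.

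The substitute does not work, for two independent reasons. First, the approximation is aimed at the wrong target. On $K$ the requirement is that for every $\phi$ there exist some $i$ with $\hat{x}_i(\phi)\neq -\hat{a}_i(\phi)/\hat{a}_{n+1}(\phi)$; the tuple $(\hat{x}_i)$ must \emph{avoid} the map $(-\hat{a}_i/\hat{a}_{n+1})$ pointwise. If instead $\hat{x}_i$ approximates $-\hat{a}_i/\hat{a}_{n+1}$ on $K$, then $\hat{a}_i+\hat{x}_i\hat{a}_{n+1}$ is uniformly small on all of $K$, so the tuple converges to the zero tuple there; the limit point lies \emph{outside} $U$, and openness of $U$ tells you nothing about points near a point not in $U$. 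One could try to repair this by adding a generic constant vector to push off the graph, but that perturbation must then be reconciled with the bound $\|y_i\|_\infty\le 1$ required for the $W$-part of your localization, and since $\sup_K|\hat{a}_i/\hat{a}_{n+1}|$ blows up as the cutoff defining $K$ is lowered, the two constraints are circular. Second, and more fundamentally, the invoked approximation is not available: the holomorphic functional calculus requires a function holomorphic on a neighborhood of the \emph{entire} joint spectrum, and $-w_i/w_{n+1}$ is singular along $\{w_{n+1}=0\}$, which meets the joint spectrum whenever $\hat{a}_{n+1}$ has a zero. Uniform approximation on $K$ alone by elements of $\hat{A}$ can genuinely fail: for the disk algebra $A(D)$ with $a_{n+1}=z$, the function $1/z$ cannot be uniformly approximated on the annulus $\{c/2\le |z|\le 1\}$ by elements of $A(D)$, since $\int_{|z|=r}f\,dz=0$ for every $f\in A(D)$ while $\int_{|z|=r}z^{-1}\,dz=2\pi i$. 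So the technical heart of the proof is still missing, and the particular route proposed for it cannot be completed as stated.
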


In general, the inequality of Bass stable ranks can be strict, see Examples \ref{diskalgebra} and \ref{H-infty} below.

As for the topological stable rank, no general comparison between $\tsr A$ and $\tsr C(X_A)$ seems to be known. Let us record this question, which was first raised in \cite[\S3]{CS88}:

\begin{prob}\label{Gelfand tsr}
Let $A\To C(X_A)$ be the Gelfand transform. Does $\tsr A\leq \tsr C(X_A)$ hold? Does $\tsr A\geq \tsr C(X_A)$ hold?
\end{prob}

We now look at some examples.

\begin{ex}\label{Z^d} The maximal ideal space of $\ell^1(\Z^d)$ is homeomorphic to the $d$-torus $T^d$ and, with this identification, the Gelfand transform is the inclusion $\ell^1(\Z^d)\into C(T^d)$. Theorem~\ref{Gelfand csrgsr} yields $\csr \ell^1(\Z^d)=\csr C(T^d)(=\lceil d/2 \rceil+1)$ and $\gsr \ell^1(\Z^d)= \gsr C(T^d)$. Pannenberg \cite[Cor.4]{Pan90} (see also Mikkola and Sasane \cite[Cor.5.14]{MS09}) computed $\tsr \ell^1(\Z^d)=\lfloor d/2 \rfloor+1$; consequently, $\tsr \ell^1(\Z^d)=\tsr C(T^d)$. For the Bass stable rank, Theorem~\ref{Gelfand bsr} together with Remark~\ref{bsrquestion by Vaserstein} give $\bsr \ell^1(\Z^d)= \bsr C(T^d)$. Summarizing, we have $\sr \ell^1(\Z^d)= \sr C(T^d)$.
\end{ex}

In the next two examples, we consider two prominent Banach algebras of holomorphic functions on the open unit disk $D=\{z\in \C: |z|<1\}$.

\begin{ex}\label{diskalgebra} 
The \emph{disk algebra} $A(D)$ is the closed subalgebra of $C(\overline{D})$ consisting of those functions which are holomorphic on $D$. The maximal ideal space $X_{A(D)}$ is homeomorphic to $\overline{D}$ and, with this identification, the Gelfand transform is the inclusion $A(D)\into C(\overline{D})$.

The dimensional stable ranks are $\bsr A(D)=1$ (Corach and Su\'{a}rez \cite{CS85}, Jones, Marshall and Wolff \cite{JMW86}) and $\tsr A(D)=2$. In particular, $\bsr A(D)<\bsr C(\overline{D})$ and  $\tsr A(D)=\tsr C(\overline{D})$. 

The homotopical stable ranks are $\csr A(D)=\gsr A(D)=1$. Indeed, the contractibility of $\overline{D}$ yields $\csr C(\overline{D})=\gsr C(\overline{D})=1$; now apply Theorem \ref{Gelfand csrgsr}.
\end{ex}

\begin{ex}\label{H-infty} The \emph{Hardy algebra} $H^\infty(D)$ is the algebra of bounded holomorphic functions on $D$. This time, no concrete topological description for the maximal ideal space $X_{H^\infty(D)}$ is available. It is known, however, that $\dim X_{H^\infty(D)}=2$ (Su\'{a}rez \cite{Sua94}).

The dimensional stable ranks are $\bsr H^\infty(D)=1$ (Treil \cite{Tre92}) and $\tsr H^\infty(D) =2$ (Su\'{a}rez \cite{Sua96}). We have again $\bsr H^\infty(D)<\bsr C(X_{H^\infty(D)})$ and  $\tsr H^\infty(D)=\tsr C(X_{H^\infty(D)})$. 

The homotopical stable ranks are $\csr H^\infty(D)=2$ and $\gsr H^\infty(D)=1$. The latter follows from $\bsr H^\infty(D)=1$. The connected stable rank formula follows from $\csr H^\infty(D)\leq 2$, together with the fact that the invertible group of $H^\infty(D)$ is not connected (see the introduction of \cite{Sua94}). 
\end{ex}

\begin{rem} In \cite{BS09}, Brudnyi and Sasane investigate \emph{projective-free} commutative Banach algebras, i.e., commutative Banach algebras with the property that every f.g. projective module is free. We point out that the Forster - Taylor isomorphism $\mathcal{P}(A)\to\mathcal{P}(C(X_A))$, mentioned above, implies one of the main results of Brudnyi and Sasane: a commutative Banach algebra $A$ is projective-free if and only if $C(X_A)$ is projective-free (compare \cite[Thm.1.2]{BS09}).
\end{rem}


\section{Matrix algebras} 
The following was proved by Vaserstein \cite[Thm.3]{Vas71} for the Bass stable rank, and by Rieffel \cite[Thm.6.1]{Rie83} for the topological stable rank:

\begin{thm}\label{matrixtsrbsr} Let $A$ be a Banach algebra. Then:
\[\tsr \M_n(A) =\Big\lceil\tfrac{1}{n}(\tsr A -1)\Big\rceil+1,\qquad \bsr \M_n(A) =\Big\lceil\tfrac{1}{n}(\bsr A -1)\Big\rceil+1 \]
\end{thm}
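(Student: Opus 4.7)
The plan is to use the Morita correspondence between $\M_n(A)$ and $A$. View $\M_n(A)^m$ as the matrix space $\M_{n,nm}(A)$ via column concatenation; left unimodular $m$-tuples in $\M_n(A)$ then correspond exactly to $n \times nm$ matrices over $A$ admitting a right $A$-inverse, equivalently giving a split surjection $A^{nm} \onto A^n$. The case $n = 1$ recovers ordinary unimodular tuples, and the correspondence is a homeomorphism intertwining the natural $\GL$-actions; both stable-rank formulas therefore become problems about approximating and modifying $n \times nm$ matrices over $A$.

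For the upper bound $\tsr \M_n(A) \leq \lceil (\tsr A - 1)/n \rceil + 1$, I would take $m$ equal to the claimed value (so that $nm - (n-1) \geq \tsr A$) and argue by row reduction. Given an arbitrary $n \times nm$ matrix $M$ and $\varepsilon > 0$, first perturb the first row of $M$ within $\varepsilon$ into a unimodular $nm$-tuple, which is possible since $nm \geq \tsr A$; then right-multiply by a suitable element of $\GL_{nm}(A)$ to normalize the first row to $(1, 0, \ldots, 0)$. The right-invertibility of the perturbed matrix is now equivalent to that of the $(n-1) \times (nm-1)$ submatrix obtained by removing the first row and first column. Iterating $n-1$ more times, the binding constraint is the final one, $nm - (n-1) \geq \tsr A$, precisely $m \geq \lceil (\tsr A - 1)/n \rceil + 1$. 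For the matching lower bound, I would associate to any $\underline{a} \in A^{n(m-1)+1}$ the companion block matrix
\[
M_{\underline{a}} = \begin{pmatrix} \underline{a} & 0 \\ 0 & I_{n-1} \end{pmatrix},
\]
with $\underline{a}$ in the top-left and an $(n-1) \times (n-1)$ identity in the bottom-right, and argue that any right-invertible $\varepsilon$-approximation of $M_{\underline{a}}$ has a near-identity bottom-right block, so that column operations clearing the last $n-1$ columns yield a small perturbation of $\underline{a}$ that is unimodular in $A^{n(m-1)+1}$. Thus $\tsr \M_n(A) \leq m$ forces $\tsr A \leq n(m-1)+1$.

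For $\bsr$ the same template applies, with the Bass transvection replacing the density step at each row reduction: a unimodular row of length $nm \geq \bsr A + 1$ admits a Bass reduction realized by an elementary right-multiplication that normalizes the row. The count is identical, and the companion-matrix argument carries over. The main obstacle is the row-normalization step: turning a unimodular $\ell$-tuple into the first row of an invertible matrix requires the tuple to lie in the $\GL_\ell(A)$-orbit of $(1, 0, \ldots, 0)$, a question closer in spirit to $\gsr A$ than to $\tsr A$ or $\bsr A$. One must verify, using the standard inequalities $\gsr A \leq \csr A \leq \bsr A + 1 \leq \tsr A + 1$ of Theorem~\ref{ord}, that the stable-rank hypothesis really supplies the control needed at each step, and that the companion-matrix construction for the lower bound isolates $\underline{a}$ tightly enough to avoid any off-by-one slack in the final count.
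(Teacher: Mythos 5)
First, a point of reference: the paper does not actually prove Theorem~\ref{matrixtsrbsr} --- it cites Vaserstein \cite{Vas71} for $\bsr$ and Rieffel \cite{Rie83} for $\tsr$ --- so you are reconstructing the classical arguments rather than anything in the text. Your $\tsr$ half is essentially the standard (Rieffel-style) rectangular-matrix reduction and is sound in outline. The obstacle you flag at the end is not a real one: at step $k$ the row being normalized has length $nm-k+1\geq \tsr A+n-k$, and normalization is only needed for $k\leq n-1$, where this is $\geq \tsr A+1\geq \bsr A+1$; a unimodular $\ell$-tuple with $\ell\geq \bsr A+1$ is carried to $(1,0,\dots,0)$ by an elementary (hence invertible) matrix via the iterated Bass reduction, so Theorem~\ref{ord} is not even needed. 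At the last step no normalization occurs, only unimodularity, and $nm-n+1\geq \tsr A$ is exactly the binding constraint, as you say. Two details do require care: the perturbation allowed at step $k$ must be divided by the norms of the inverses of the $\GL$-elements already applied, so that it pulls back to an $\e$-perturbation of the original matrix; and your identification has the handedness reversed --- left unimodular $m$-tuples over $\M_n(A)$ correspond to $nm\times n$ matrices with a \emph{left} inverse, not to $n\times nm$ matrices with a right inverse, which matters here because $\tsr$ is not known to be left--right symmetric (Remark 3.2(g)). The companion-matrix lower bound is correct: the bottom-right block of any nearby right-invertible matrix is invertible, and clearing it shows that $a'-bD^{-1}c$ is unimodular and close to $\underline{a}$.

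The genuine gap is the $\bsr$ half, where ``the same template'' does not apply. The Bass condition for $\M_n(A)$ at level $m$ demands, for a given unimodular $(T_1,\dots,T_{m+1})$, elements $X_i$ with $(T_i+X_iT_{m+1})_{i\leq m}$ unimodular: in the rectangular picture this is a \emph{constrained, one-sided} operation (add multiples of the last block of $n$ rows, respectively columns, to the remaining ones, then delete that block), and there is no density to exploit. Your row-by-row normalization produces a two-sided $\GL$ change of basis exhibiting completability of the matrix --- information of $\gsr$/$\csr$ type --- but it does not produce a modification of the prescribed Bass form, and nothing in the sketch converts one into the other. Vaserstein's actual proof is a separate induction on the number of columns of the rectangular matrix, using the hereditary property that $\bsr A\leq q$ forces every unimodular $\ell$-tuple with $\ell>q$ to reduce, in order to make the relevant part of the first column unimodular by adding multiples of the bottom block \emph{only}; the lower bound likewise needs a companion-matrix argument that extracts a reduction of the prescribed form for $A$ from one for $\M_n(A)$, not merely a nearby unimodular tuple. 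As written, the $\bsr$ formula is asserted by analogy rather than proved.
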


For the homotopical stable ranks we have:

\begin{thm}\label{matrixcsrgsr} Let $A$ be a Banach algebra. Then:
\[\csr \M_n(A) \leq\Big\lceil\tfrac{1}{n}(\csr A-1)\Big\rceil+1,\quad \gsr\M_n(A) \leq\Big\lceil\tfrac{1}{n}(\gsr A-1)\Big\rceil+1\]
\end{thm}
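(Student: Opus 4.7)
The plan is to reduce both inequalities to the corresponding stable-rank conditions for $A$ itself, by means of the Morita equivalence between right $\M_n(A)$-modules and right $A$-modules. Under this equivalence, $\M_n(A)$ (as right $\M_n(A)$-module) corresponds to $A^n$ (as right $A$-module), and so $\M_n(A)^m$ corresponds to $A^{nm}$.

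\emph{General stable rank.} Set $k = \lceil\tfrac{1}{n}(\gsr A - 1)\rceil + 1$ and take $m \geq k$. If $Q$ is a right $\M_n(A)$-module with $Q \oplus \M_n(A) \simeq \M_n(A)^m$, Morita yields $P \oplus A^n \simeq A^{nm}$ for some right $A$-module $P$. Now condition (gsr$'$) of Definition~\ref{definestableranks} lets us peel off the copies of $A$ one at a time: write $(P \oplus A^{n-1}) \oplus A \simeq A^{nm}$, cancel (valid as $nm \geq \gsr A$) to get $P \oplus A^{n-1} \simeq A^{nm-1}$, and iterate. After $n$ cancellations---the tightest of which requires $nm - n + 1 \geq \gsr A$, i.e.\ $m \geq k$---we arrive at $P \simeq A^{n(m-1)}$, hence $Q \simeq \M_n(A)^{m-1}$.

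\emph{Connected stable rank.} Identify $\Lg_m(\M_n(A))$ with the space of $nm \times n$ matrices over $A$ admitting a left inverse, by stacking the $n \times n$ blocks $a_1, \ldots, a_m$ vertically. Under $\GL_m^0(\M_n(A)) = \GL_{nm}^0(A)$, the action on $\Lg_m(\M_n(A))$ becomes left multiplication. I will prove by induction on $n$ that whenever $r \geq \csr A + n - 1$, the group $\GL_r^0(A)$ acts transitively on the set $\mathcal{F}_n(A^r)$ of $r \times n$ matrices over $A$ with a left inverse (``left unimodular $n$-frames in $A^r$''); setting $r = nm$ yields the claim from the binding inequality $nm - n + 1 \geq \csr A$. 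The base case $n=1$ is the definition of $\csr A \leq r$. For the step, take $n$-frames $X, Y$. Their last columns lie in $\Lg_r(A)$; since $r \geq \csr A$, two applications of (csr) put $X$ and $Y$ into the block form $\bigl(\begin{smallmatrix} X' & 0 \\ \alpha & 1\end{smallmatrix}\bigr)$ and $\bigl(\begin{smallmatrix} Y' & 0 \\ \beta & 1\end{smallmatrix}\bigr)$. Reading off the top-left $(n-1) \times (r-1)$ block of any left inverse shows $X', Y'$ are $(n-1)$-frames in $A^{r-1}$; since $r - 1 \geq \csr A + (n-1) - 1$, induction provides $g \in \GL_{r-1}^0(A)$ with $gX' = Y'$, and then $\diag(g, 1) \in \GL_r^0(A)$ aligns the top-left blocks without disturbing the last column $e_r$. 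Finally, since $Y'$ has a left inverse, we solve a linear equation for a row $s \in A^{r-1}$ with $sY' = \beta - \alpha$, and the block-unipotent matrix $\bigl(\begin{smallmatrix} I & 0 \\ s & 1\end{smallmatrix}\bigr) \in \GL_r^0(A)$ equalizes the last row.

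\emph{Main obstacle.} The $\gsr$ inequality is, once Morita is in place, a routine telescopic cancellation. The actual content lies in the $\csr$ induction, where the challenge is to carry out every elementary reduction \emph{inside the identity component} $\GL_r^0(A)$ rather than merely $\GL_r(A)$: the diagonal extension $\diag(g,1)$ inherits connectivity from $g$ by joining $g$ to $I_{r-1}$ and padding, while the unipotent factor is joined to $I_r$ along the obvious path $t \mapsto \bigl(\begin{smallmatrix} I & 0 \\ ts & 1\end{smallmatrix}\bigr)$. The essential abstraction is to formulate the inductive statement in terms of $n$-frames in $A^r$ rather than unimodular tuples, so that $n$ and $r$ can be shrunk together at each step.
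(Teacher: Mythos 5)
Your argument is correct. Note that the paper does not actually prove Theorem~\ref{matrixcsrgsr}: it cites Nistor, Rieffel, and McConnell--Robson for the two halves, so you have supplied a self-contained proof where the paper has none. Your gsr half is essentially the Morita-plus-telescoping-cancellation argument behind \cite[Cor.11.5.13]{MR87}, and the bookkeeping is right: the binding requirement $nm-n+1\geq\gsr A$ is exactly $m\geq\lceil\tfrac{1}{n}(\gsr A-1)\rceil+1$, and the Morita equivalence legitimately converts $Q\oplus\M_n(A)\simeq\M_n(A)^m$ into $P\oplus A^n\simeq A^{nm}$ and back. Your csr half, inducting on the width of left-invertible $r\times n$ frames, is in the spirit of Rieffel's treatment in \cite{Rie87}; the key verifications all go through: the last column of a frame is left unimodular (pair it against the last row of a left inverse), the vanishing of the top-right block of the left inverse of $\bigl(\begin{smallmatrix} X' & 0\\ \alpha & 1\end{smallmatrix}\bigr)$ forces the cross term to drop out so that $X'$ inherits a left inverse, the row $s=(\beta-\alpha)C'$ solves $sY'=\beta-\alpha$, and both $\diag(g,1)$ and the block-unipotent factor visibly lie in $\GL^0_r(A)$. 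One small point worth making explicit if you write this up: the identification $\GL^0_m(\M_n(A))=\GL^0_{nm}(A)$ holds because $\M_m(\M_n(A))\to\M_{nm}(A)$ is an isomorphism of topological algebras, hence matches identity components; and the trivial case $\csr A=\infty$ (or $\gsr A=\infty$) should be dispatched separately since the right-hand sides are then $\infty$.
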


Both Nistor \cite[Prop.2.10]{Nis86} and Rieffel \cite[Thm.4.7]{Rie87} showed the connected stable rank estimate. The one concerning the general stable rank appears in \cite[Cor.11.5.13]{MR87} (note that gsr as defined in \cite{MR87} equals $\gsr-1$ as defined here).

An important consequence is the fact that having stable rank equal to $1$ is a stable property:

\begin{cor}\label{morita} If $\sr A =1$ then $\sr \M_n(A)=1$.
\end{cor}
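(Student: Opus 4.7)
The plan is to read the corollary off directly from the two preceding theorems, noting that when $\sr A = 1$ the right-hand expression $\lceil \tfrac{1}{n}(\sr A - 1)\rceil + 1$ collapses to $\lceil 0 \rceil + 1 = 1$ regardless of $n$.

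More specifically, I would split by which of the four stable ranks is meant. If $\sr$ denotes $\bsr$ or $\tsr$, then Theorem~\ref{matrixtsrbsr} is an exact equality, and plugging $\bsr A = 1$ or $\tsr A = 1$ into the formula immediately yields $\bsr \M_n(A) = 1$ or $\tsr \M_n(A) = 1$. If $\sr$ denotes $\csr$ or $\gsr$, then Theorem~\ref{matrixcsrgsr} only supplies the upper bound $\sr \M_n(A) \leq 1$; to conclude equality one invokes the convention built into Definition~\ref{definestableranks}, according to which every stable rank is, by definition, a least integer $n \geq 1$, so that no stable rank can be strictly less than $1$.

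There is no obstacle here: the corollary is a direct specialization of the two matrix-algebra estimates already on record, and the only point worth making explicit is that the ceiling of zero is zero, so that the base case $\sr A = 1$ propagates verbatim to $\sr \M_n(A) = 1$ for every $n \geq 1$.
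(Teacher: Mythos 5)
Your argument is correct and is exactly how the paper obtains this corollary: it is stated immediately after Theorems~\ref{matrixtsrbsr} and~\ref{matrixcsrgsr} as a direct consequence, with the right-hand side collapsing to $1$ when $\sr A=1$ and the lower bound $\sr \M_n(A)\geq 1$ coming from the definition. Nothing is missing.
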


\begin{rem} The inequalities in Theorem~\ref{matrixcsrgsr} can be strict, as the following example shows. Let $A=C(S^{2d})$ with $d\geq 3$, and let $n>d$. Then, in both inequalities, the right-hand side equals $2$ (recall Example~\ref{csrspheres} and Proposition~\ref{gsrspheres}) whereas the left-hand side equals $1$ ($\csr\M_n(A)=1$ follows from $\csr\M_n(A)\leq 2$ and the vanishing of $\pi_0(\GL_n(A))\simeq \pi_{2d} \U(n)$; $\gsr\M_n(A)=1$ follows from $\gsr\M_n(A)\leq 2$ and the finiteness of $\M_n(A)$).
\end{rem}

\section{Quotients}\label{Sectiononto}
The following result is due to Vaserstein \cite[Thm.7]{Vas71} for the Bass stable rank, and to Rieffel \cite[Thm.4.3]{Rie83} for the topological stable rank:

\begin{thm}\label{ontotsrbsr} Let $\pi: A\To B$ be an epimorphism. Then $\tsr B \leq \tsr A$ and $\bsr B \leq \bsr A$.
\end{thm}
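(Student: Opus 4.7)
The plan is to treat the two inequalities separately: the topological part rests on the open mapping theorem, while the algebraic part uses a dimension-bumping lifting trick essentially due to Vaserstein. Throughout, write $I=\ker\pi$.

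For $\tsr B\leq\tsr A$, I would fix $n\geq\tsr A$, a point $\underline{b}\in B^n$, and a neighborhood $V$ of $\underline{b}$. Since $\pi$ is a continuous surjection between Banach spaces, the open mapping theorem guarantees that $\pi:A^n\To B^n$ is open. Lifting $\underline{b}$ componentwise to some $\underline{a}\in A^n$, the preimage $\pi^{-1}(V)$ is an open neighborhood of $\underline{a}$, so by density of $\Lg_n(A)$ in $A^n$ I can pick $\underline{a}'\in\Lg_n(A)\cap\pi^{-1}(V)$. Pushing a unimodularity witness $\sum c_i a'_i=1_A$ through $\pi$ shows $\pi(\underline{a}')\in\Lg_n(B)\cap V$, so $\Lg_n(B)$ is dense in $B^n$ and $\tsr B\leq n$.

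For $\bsr B\leq\bsr A$, I would fix $n\geq\bsr A$ and $(b_1,\dots,b_{n+1})\in\Lg_{n+1}(B)$. The principal obstacle is that a componentwise lift $(a_1,\dots,a_{n+1})\in A^{n+1}$ need not be unimodular in $A$. I would handle this by a one-dimension bump: choose $d_i\in B$ with $\sum d_i b_i=1_B$, lift each $d_i$ to $c_i\in A$, and set $r:=1_A-\sum c_i a_i\in I$. Then $(a_1,\dots,a_{n+1},r)\in\Lg_{n+2}(A)$, as witnessed by $(c_1,\dots,c_{n+1},1)$. Invoking the Bass reduction property of $A$ at length $n+2\to n+1$, which is valid because $n+1\geq\bsr A$ by Vaserstein's stability for the Bass stable rank, I obtain $x_i\in A$ with $(a_1+x_1 r,\dots,a_{n+1}+x_{n+1}r)\in\Lg_{n+1}(A)$. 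As $r\in I$, this modified tuple still projects onto $(b_1,\dots,b_{n+1})$, so the original unimodular tuple in $B$ now admits a genuinely unimodular lift to $A$.

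With such a lift in hand, I would apply Bass reduction in $A$ once more, this time at length $n+1\to n$: there exist $x'_i\in A$ producing an element of $\Lg_n(A)$ via the standard reduction formula. Pushing that equation through $\pi$ cancels the $x_i r$ corrections and yields $(b_1+\pi(x'_1)b_{n+1},\dots,b_n+\pi(x'_n)b_{n+1})\in\Lg_n(B)$, which is precisely the reduction required to conclude $\bsr B\leq n$. The whole argument concentrates the difficulty in the lifting step; once a unimodular lift exists in $A$, the Bass machinery of $A$ transfers to $B$ formally, and no topological hypothesis on $\pi$ is needed for this half of the theorem.
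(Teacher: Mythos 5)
Your proposal is correct. Note that the paper does not actually prove this theorem itself --- it cites Vaserstein and Rieffel --- so the natural comparison is with the fact $(\dagger)$ that the paper isolates inside the proof of Theorem~\ref{ontocsrgsr}: for $n\geq\bsr A$ the map $\pi:\Lg_n(A)\To\Lg_n(B)$ is onto. Your ``one-dimension bump'' (append $r=1_A-\sum c_ia_i\in\ker\pi$ to get a unimodular $(n+2)$-tuple, then reduce) is exactly that lemma, applied at length $n+1$, and the second Bass reduction followed by pushing through $\pi$ is the standard way to finish; so the $\bsr$ half is essentially the argument the paper already contains for a neighbouring statement, and you are right that it is purely ring-theoretic. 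Two small remarks. First, the open mapping theorem is superfluous in the $\tsr$ half: you only use that $\pi^{-1}(V)$ is open (continuity) and meets $A^n$ in a lift of $\underline{b}$ (surjectivity); openness of $\pi$ is never invoked. Second, your argument genuinely leans on the stability of the Bass condition ($\mathrm{SR}_m$ implies $\mathrm{SR}_{m+1}$, so that reduction is available at length $n+2\to n+1$ and not just at $\bsr A+1\to\bsr A$); this is a real external input due to Vaserstein, but the paper relies on the same fact implicitly when it applies $(\dagger)$ for all $n\geq\bsr A$, so citing it is entirely in keeping with the paper's conventions.
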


We now consider the homotopical stable ranks. An example as simple as $C(I^d)\onto C(\partial I^d)$ shows that we cannot expect to have $\csr B\leq \csr A$, or $\gsr B\leq \gsr A$, whenever $B$ is a quotient of $A$. However, we have the following:

\begin{thm}\label{ontocsrgsr} Let $\pi: A\To B$ be an epimorphism. Then:
\[ \csr B \leq \max\{\csr A ,\bsr A\}, \qquad\gsr B \leq \max\{\gsr A ,\bsr A \}\]
\end{thm}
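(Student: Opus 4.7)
The strategy is to transport the transitivity of the relevant action from $A$ down to $B$ through $\pi$. The only technical ingredient needed is a \emph{lifting lemma}: if $m\geq \bsr A$, then the induced map $\pi:\Lg_m(A)\to\Lg_m(B)$ is surjective. Granting this lemma, both inequalities become essentially formal.

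I would first prove the lifting lemma. Given $\underline{b}=(b_1,\dots,b_m)\in\Lg_m(B)$, pick arbitrary lifts $a_i\in A$ with $\pi(a_i)=b_i$, choose $(c_i)\in B^m$ with $\sum c_ib_i=1$, and lift the $c_i$ to $d_i\in A$. Setting $e:=1-\sum d_ia_i$, one has $e\in\ker\pi$, and the identity $\sum d_ia_i+1\cdot e=1$ exhibits $(a_1,\dots,a_m,e)$ as an element of $\Lg_{m+1}(A)$. Because $m\geq \bsr A$, the Bass stable rank condition applied to this $(m+1)$-tuple (with $e$ playing the role of the last entry) yields $x_1,\dots,x_m\in A$ such that $\underline{a}':=(a_i+x_ie)_{i=1}^m\in\Lg_m(A)$. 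Since $\pi(e)=0$, we still have $\pi(\underline{a}')=\underline{b}$, which is the claim. The cleverness here is choosing the augmented tuple correctly; this is the only non-formal step of the proof.

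Given the lifting, the $\csr$ inequality goes as follows. Put $n=\max\{\csr A,\bsr A\}$ and take any $m\geq n$. For $\underline{b},\underline{b}'\in\Lg_m(B)$, lift them via the lemma to $\underline{a},\underline{a}'\in\Lg_m(A)$. Since $m\geq\csr A$, there exists $\alpha\in\GL_m^0(A)$ with $\alpha\cdot\underline{a}^T=(\underline{a}')^T$. Applying the continuous unital morphism $\pi$ sends a path from $1_m$ to $\alpha$ in $\GL_m(A)$ to a path from $1_m$ to $\pi(\alpha)$ in $\GL_m(B)$, so $\pi(\alpha)\in\GL_m^0(B)$; and $\pi(\alpha)\cdot\underline{b}^T=(\underline{b}')^T$. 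Hence $\GL_m^0(B)$ acts transitively on $\Lg_m(B)$ for every $m\geq n$, i.e.\ $\csr B\leq n$.

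The $\gsr$ inequality follows by exactly the same argument, with $\GL_m$ in place of $\GL_m^0$ and $\gsr A$ in place of $\csr A$. The main obstacle is really just spotting the augmentation trick for the lifting lemma; once that is in place, the rest is bookkeeping, and the Bass stable rank enters in precisely the way the statement of the theorem suggests, namely as the cost of lifting unimodular tuples along a surjection.
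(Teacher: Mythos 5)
Your proposal is correct and follows essentially the same route as the paper: the lifting lemma is exactly Bass's fact that $\pi:\Lg_m(A)\to\Lg_m(B)$ is onto for $m\geq\bsr A$ (proved by the same augmentation trick, with your $e$ being the negative of the paper's $k$), and the transitivity-transport argument via $\pi(\alpha)$ is the paper's second proof of the $\csr$ bound and its proof of the $\gsr$ bound.
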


The connected stable rank estimate from Theorem~\ref{ontocsrgsr} is due to Elhage Hassan \cite[Thm.1.1]{Elh95}.

\begin{proof} 
We use the following fact (\cite[Lem.4.1]{Bas64}): 
\begin{itemize}
\item[($\dagger$)] If $\pi:A\To B$ is onto, then $\pi: \Lg_n(A)\To \Lg_n(B)$ is onto for $n\geq\bsr A$.
\end{itemize}
Indeed, let $(b_i)\in \Lg_n(B)$. Pick $(b'_i)\in B^n$ such that $\sum b_i'b_i=1$, and $(a_i), (a'_i)\in A^n$ with $\pi(a_i)=b_i$,  $\pi(a_i')=b_i'$. Then $\pi(\sum a_i'a_i)=1$, that is, $\sum a_i'a_i=1+k$ for some $k\in\ker\pi$. As $(a_1,\dots,a_n,k)\in \Lg_{n+1}(A)$ and $n\geq\bsr A$, we get $(a_i+c_ik)\in \Lg_n(A)$ for some $(c_i)\in A^n$. We are done, since $\pi(a_i+c_ik)=(b_i)$.

Let $m\geq \max\{\csr A,\bsr A\}$. Then $\Lg_m(A)$ is connected, and $\Lg_m(A)$ maps onto $\Lg_m(B)$, by $(\dagger)$. It follows that $\Lg_m(B)$ is connected, so $\csr B \leq \max\{\csr A ,\bsr A \}$. This is the proof given in \cite[Thm.1.1]{Elh95}. Let us give another argument. We claim that $\GL^0_m(B)$ acts transitively on $\Lg_m(B)$. Let $\underline{b}, \underline{b'}\in \Lg_m(B)$. By $(\dagger)$, we can pick $\underline{a}, \underline{a'}\in\Lg_m(A)$ which are $\pi$-lifts of $\underline{b}, \underline{b'}$. There is $\alpha\in\GL^0_m(A)$ so that $\alpha\cdot \underline{a}^T=\underline{a'}^T$; hence $\pi(\alpha)\cdot \underline{b}^T=\underline{b'}^T$ with $\pi(\alpha)\in\GL^0_m(B)$. We conclude that $\csr B \leq \max\{\csr A ,\bsr A\}$.

It is obvious how to adapt the second proof so as to handle the general stable rank estimate. Let $m\geq \max\{\gsr A,\bsr A\}$; we want to show that $\GL_m(B)$ acts transitively on $\Lg_m(B)$. Let $\underline{b}, \underline{b'}\in \Lg_m(B)$, and let $\underline{a}, \underline{a'}\in\Lg_m(A)$ be $\pi$-lifts of $\underline{b}, \underline{b'}$. There is $\alpha\in\GL_m(A)$ taking $\underline{a}$ to $\underline{a'}$; hence $\pi(\alpha)\in\GL_m(B)$ takes $\underline{b}$ to $\underline{b'}$.
\end{proof}

An epimorphism $\pi:A\to B$ is said to be  \emph{split} if there is a section morphism $s: B\to A$ such that $\pi\circ s=\id_B$. For such epimorphisms, the proof of Theorem~\ref{csrgsrhomotopy} shows the following:

\begin{prop}\label{ontosplit} Let $\pi: A\To B$ be a split epimorphism. Then $\csr B \leq \csr A$ and $\gsr B \leq \gsr A$.
\end{prop}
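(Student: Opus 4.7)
The plan is to apply the argument from the proof of Theorem~\ref{csrgsrhomotopy} with the roles of the two algebras exchanged: in that proof, a pair of morphisms $\phi:A\to B$, $\psi:B\to A$ with $\psi\phi$ homotopic to $\id_A$ was used to get $\csr A\leq \csr B$ and $\gsr A \leq \gsr B$. Here we are given $s:B\to A$ and $\pi:A\to B$ with $\pi\circ s=\id_B$, which is a stronger assumption (equality rather than mere homotopy) in the reverse direction, and we want the reverse conclusion $\csr B\leq \csr A$, $\gsr B\leq \gsr A$.

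First I would verify the elementary fact that $s$ carries unimodular tuples to unimodular tuples: if $\underline{b}\in\Lg_m(B)$ with $\sum b_i'b_i=1$, then applying the unital morphism $s$ gives $\sum s(b_i')s(b_i)=1$, so $s(\underline{b})\in\Lg_m(A)$.

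Now for the $\csr$ inequality, take $m\geq \csr A$ and $\underline{b},\underline{b'}\in \Lg_m(B)$. Lift via $s$ to $s(\underline{b}),s(\underline{b'})\in \Lg_m(A)$. By hypothesis on $m$, there is $\alpha\in \GL^0_m(A)$ with $\alpha\cdot s(\underline{b})^T=s(\underline{b'})^T$. Applying the morphism $\pi$ yields $\pi(\alpha)\in \GL^0_m(B)$, and using $\pi\circ s=\id_B$ we obtain $\pi(\alpha)\cdot \underline{b}^T=\underline{b'}^T$. Thus $\GL^0_m(B)$ acts transitively on $\Lg_m(B)$ for every $m\geq \csr A$, which gives $\csr B\leq \csr A$. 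The $\gsr$ inequality is identical, except that $\alpha$ is chosen in $\GL_m(A)$ rather than in $\GL^0_m(A)$, and correspondingly $\pi(\alpha)$ lies in $\GL_m(B)$.

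There is no real obstacle; the split hypothesis makes this noticeably easier than the homotopy case, since one gets an equality $\pi s=\id_B$ instead of having to transport $\underline{a}$ to $\psi\phi(\underline{a})$ through the connecting path of morphisms. The only point worth noting is the unitality of $s$, which is implicit in our standing convention on morphisms and is what allows the lift $s(\underline{b})$ to remain unimodular.
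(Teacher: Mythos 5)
Your argument is correct and is exactly what the paper intends: the stated proof of Proposition~\ref{ontosplit} is to run the argument of Theorem~\ref{csrgsrhomotopy} with $\phi=s$ and $\psi=\pi$, where $\pi s=\id_B$ makes the "same component" step trivial. Nothing further is needed.
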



\section{Dense morphisms}\label{SectionDense}
A morphism $\phi:A\To B$ between Banach algebras is \emph{dense} if $\phi(A)$ is dense in $B$. Note that dense morphisms are automatically onto in the $\Cstar$-context. However, dense morphisms which are not surjective appear naturally when we work in the Banach (or the Fr\'echet) category.

For the topological stable rank we have the following observation (cf. \cite[Prop.4.12]{Bad98}):

\begin{thm}\label{densetsr} Let $\phi: A\To B$ be a dense morphism. Then $\tsr B\leq \tsr A$.
\end{thm}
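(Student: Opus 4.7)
The plan is to exploit two pieces of density simultaneously: the density of $\phi(A)$ in $B$, and (assuming $\tsr A = n < \infty$, the only non-trivial case) the density of $\Lg_n(A)$ in $A^n$. The linchpin observation is that continuous unital morphisms automatically carry unimodular tuples to unimodular tuples: if $\underline{a}\in\Lg_n(A)$ with $\sum b_i a_i = 1$ in $A$, then applying $\phi$ and using $\phi(1)=1$ gives $\sum \phi(b_i)\phi(a_i)=1$ in $B$, so $\phi(\underline{a})\in\Lg_n(B)$. Thus $\phi(\Lg_n(A))\subseteq \Lg_n(B)$, and it will be enough to show that $\phi(\Lg_n(A))$ is dense in $B^n$.

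Given $\underline{b}\in B^n$ and $\varepsilon>0$, I would first use density of $\phi(A)$ in $B$ to choose $\underline{a}\in A^n$ with each $\|\phi(a_i)-b_i\|<\varepsilon/2$. Writing $C=\max\{1,\|\phi\|\}$ for the norm of the bounded morphism $\phi$, the density of $\Lg_n(A)$ in $A^n$ then furnishes $\underline{a}'\in\Lg_n(A)$ with $\|a_i'-a_i\|<\varepsilon/(2C)$ for each $i$. Combining the two estimates via the triangle inequality gives
\[ \|\phi(a_i')-b_i\|\leq \|\phi\|\cdot\|a_i'-a_i\|+\|\phi(a_i)-b_i\|<\varepsilon, \]
so $\phi(\underline{a}')\in\Lg_n(B)$ lies within $\varepsilon$ of $\underline{b}$. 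This establishes density of $\Lg_n(B)$ in $B^n$ and hence $\tsr B\leq n=\tsr A$.

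There is essentially no obstacle here; the result is a soft consequence of the two densities. The only small point to keep in mind is that $\phi$ must be bounded and unital for the preservation of unimodular tuples to go through, which is built into our standing convention on morphisms of Banach algebras. The argument is in clear contrast with the situation for $\bsr$, where lifting a Bass-type cancellation from $A$ to $B$ would not be automatic without surjectivity.
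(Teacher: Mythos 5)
Your proof is correct and follows the same route as the paper: both arguments combine the density of $\Lg_n(A)$ in $A^n$ with the density of $\phi(A)$ in $B$, together with the observation that $\phi(\Lg_n(A))\subseteq\Lg_n(B)$, to conclude that $\Lg_n(B)$ is dense in $B^n$. The paper merely phrases the quantitative step more compactly as a composition of two densities under the continuous map $\phi$, where you spell out the $\varepsilon/2$ estimates explicitly.
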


\begin{proof} Put $n=\tsr A$. Then $\Lg_n(A)$ is dense in $A^n$, hence $\phi(\Lg_n(A))$ is dense in $\phi(A^n)$. But $\phi(A^n)$ is dense in $B^n$, so $\phi(\Lg_n(A))$ is dense in $B^n$. Therefore $\Lg_n(B)$ is dense in $B^n$.
\end{proof}

\begin{ex} 
Consider the dense inclusion of $\ell^1F_n$ into the full group $\Cstar$-algebra $\Cstar F_n$, where $F_n$ is the free group on $n\geq 2$ generators.
As shown by Joel Anderson in \cite[Thm.6.7]{Rie83}, $\tsr \Cstar F_n=\infty$; hence $\tsr \ell^1F_n=\infty$. On the other hand, $\tsr \Cred F_n=1$ (Example~\ref{examplestablerankone}).
\end{ex}

Comparing Theorem~\ref{densetsr} and Theorem~\ref{ontotsrbsr}, we are led to the following: 

\begin{prob}\label{bsrquestion}
Let $\phi: A\To B$ be a dense morphism. Is $\bsr B\leq\bsr A$?
\end{prob}

\begin{rem}\label{bsrquestion by Vaserstein} A theorem of Vaserstein \cite[Thm.7]{Vas71} gives a partial answer to the above problem: if $A$ is a dense subalgebra of $C(X)$, where $X$ is a compact space, then $\bsr C(X)\leq \bsr A$. 
\end{rem}

The next result should be compared with Theorem~\ref{ontocsrgsr}:

\begin{thm}\label{densecsrgsr} Let $\phi: A\To B$ be a dense morphism. Then: 
\[\csr B  \leq \max\{\csr A ,\tsr A\},\qquad \gsr B  \leq \max\{\gsr A ,\tsr A\}\]
\end{thm}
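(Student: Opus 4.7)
The plan is to mimic the second proof given for Theorem~\ref{ontocsrgsr}, substituting for the surjective lifting fact $(\dagger)$ an approximate-lifting statement natural in the dense setting. First I would establish the following substitute: if $\phi:A\To B$ is dense, $n\geq\tsr A$, and $U$ is a non-empty open subset of $B^n$, then $\Lg_n(A)\cap\phi^{-1}(U)\neq\void$. The verification is quick: $\phi(A^n)$ is dense in $B^n$, so $\phi^{-1}(U)$ is a non-empty open subset of $A^n$, and $\Lg_n(A)$ is dense in $A^n$ by the definition of $\tsr A$, so the two meet.

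Combined with the open-orbit observation from Remark~a), this yields the following consequence: if $n\geq\tsr A$, then every $\GL^0_n(B)$-orbit in $\Lg_n(B)$ (and a fortiori every $\GL_n(B)$-orbit) meets $\phi(\Lg_n(A))$. Indeed, given $\underline{b}\in\Lg_n(B)$, the $\GL^0_n(B)$-orbit through $\underline{b}$ is open in $B^n$, so the approximate-lifting fact produces some $\underline{a}\in\Lg_n(A)$ whose image $\phi(\underline{a})$ lies in that orbit.

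With this in hand, fix $\underline{b},\underline{b'}\in\Lg_n(B)$ for $n\geq\max\{\tsr A,\csr A\}$, and choose $\underline{a},\underline{a'}\in\Lg_n(A)$ with $\phi(\underline{a})$ and $\phi(\underline{a'})$ in the $\GL^0_n(B)$-orbits of $\underline{b}$ and $\underline{b'}$, respectively. Since $n\geq\csr A$, some $\alpha\in\GL^0_n(A)$ satisfies $\alpha\cdot\underline{a}^T=(\underline{a'})^T$; applying $\phi$, the matrix $\phi(\alpha)\in\GL^0_n(B)$ takes $\phi(\underline{a})$ to $\phi(\underline{a'})$. Chaining the three $\GL^0_n(B)$-translations places $\underline{b}$ and $\underline{b'}$ in the same $\GL^0_n(B)$-orbit, which yields $\csr B\leq\max\{\csr A,\tsr A\}$. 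The general-stable-rank estimate is proved identically, with $\GL^0_n$ replaced by $\GL_n$ throughout (noting that $\GL_n(B)$-orbits are also open, being unions of $\GL^0_n(B)$-orbits).

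The essential subtlety, relative to the surjective case of Theorem~\ref{ontocsrgsr}, is that $\underline{b}\in\Lg_n(B)$ cannot be lifted exactly to an element of $\Lg_n(A)$; the best we can do is produce a lift landing in the same $\GL^0_n(B)$-orbit (or $\GL_n(B)$-orbit). It is precisely the openness of these orbits, together with the density of $\Lg_n(A)$ in $A^n$, that turns this inexact lifting into a workable substitute for $(\dagger)$, and this is why the bound features $\tsr A$ rather than $\bsr A$.
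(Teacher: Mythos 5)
Your proposal is correct and follows essentially the same route as the paper's own (second) argument: use $\tsr A$ to get a lift of $\underline{b}$ into $\phi(\Lg_n(A))$ landing in the same orbit, via the openness of $\GL^0_n(B)$-orbits, then push the transitivity on $\Lg_n(A)$ through $\phi$. The only cosmetic differences are that the paper phrases the lifting step as density of $\phi(\Lg_m(A))$ in $\Lg_m(B)$ and normalizes to the basepoint $(0,\dots,0,1)$ rather than chaining two arbitrary tuples.
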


\begin{proof} 
We show that $\csr B\leq \max\{\csr A ,\tsr A\}$. Let $m\geq\max\{\csr A,\tsr A\}$. We have seen in the previous proof that $\phi(\Lg_m(A))$ is dense in $\Lg_m(B)$ for $m\geq\tsr A$. Since $m\geq\csr A$, $\Lg_m(A)$ is connected and so $\phi( \Lg_m(A))$ is connected. It follows that $\Lg_m(B)$ is connected, as it contains a dense connected subset.

Let us give another argument. We show that the action of $\GL^0_m(B)$ on $\Lg_m(B)$ is transitive. Let $\underline{b}\in \Lg_m(B)$. Due to the density of $\phi(\Lg_m(A))$ in $\Lg_m(B)$, we may pick $\underline{a}\in \Lg_m(A)$ such that $\phi(\underline{a})$ is in the $\GL^0_m(B)$-orbit of $\underline{b}$. Since $m\geq \csr A$, there is $\alpha\in \GL^0_m(A)$ taking $(0,\dots, 0,1)\in A^m$ to $\underline{a}$. Then $\phi(\alpha)\in \GL^0_m(B)$ takes $(0,\dots, 0,1)\in B^m$ to $\phi(\underline{a})$. Therefore $\underline{b}\in \Lg_m(B)$ is in the $\GL^0_m(B)$-orbit of $(0,\dots, 0,1)\in B^m$.

Although slightly longer, the second argument has the advantage of being easily adaptable so as to yield the general stable rank estimate. To spell it out, we claim that the action of $\GL_m(B)$ on $\Lg_m(B)$ is transitive whenever $m\geq\max\{\gsr A,\tsr A\}$. Let $\underline{b}\in \Lg_m(B)$, and pick $\underline{a}\in \Lg_m(A)$ such that $\phi(\underline{a})$ is in the $\GL^0_m(B)$-orbit of $\underline{b}$; as before, we use here the density of $\phi(\Lg_m(A))$ in $\Lg_m(B)$ - available as soon as $m\geq\tsr A$. Since $m\geq \gsr A$, there is $\alpha\in \GL_m(A)$ taking the last basis vector $(0,\dots, 0,1)\in A^m$ to $\underline{a}$. Then $\phi(\alpha)\in \GL_m(B)$ takes $(0,\dots, 0,1)\in B^m$ to $\phi(\underline{a})$. Therefore $\underline{b}\in \Lg_m(B)$ is in the $\GL_m(B)$-orbit of $(0,\dots, 0,1)\in B^m$, as desired. 
\end{proof}

\section{Swan's problem}\label{SectionSwan}
A Banach algebra morphism $\phi: A\to B$ is said to be \emph{spectral} if it is spectrum-preserving, that is, $\spec_B (\phi(a))=\spec_A (a)$ for all $a\in A$. Equivalently, the morphism $\phi$ is spectral if, for all $a\in A$, we have that $a$ is invertible in $A$ if and only if $\phi(a)$ is invertible in $B$. 

The Gelfand transform is an example of spectral morphism. In Section~\ref{Gelfand section}, we compared stable ranks across the Gelfand transform, and we saw that the homotopical stable ranks are better behaved than the dimensional stable ranks. In this section, we give up the commutative context of Section~\ref{Gelfand section}. Instead, the spectral morphisms we consider are assumed to be dense, i.e., they have dense image. (The Gelfand transform may or may not be dense.) Following the theme of the paper, we are interested in the following problem raised by Swan \cite[p.206]{Swa77}: how are stable ranks related across a dense and spectral morphism? In \cite{Swa77}, Swan was working with the Bass stable rank and a certain projective stable rank; however, the above problem has since been considered for other stable ranks, as well (see, for instance, \cite{Bad98}).

Let us give some examples of dense and spectral morphisms. We start with a commutative one: if $M$ is a compact manifold, then the inclusion $C^k(M)\into C(M)$ is dense and spectral. Here $C^k(M)$ is a Banach algebra under the norm $\|f\|_{(k)}:= \sum _{|\alpha|\leq k} \|\partial^{\alpha}f\|_\infty$, defined using local charts on $M$. A metric cousin of this example is the following: if $X$ is a compact metric space, then the inclusion $\mathrm{Lip}(X)\into C(X)$ is dense and spectral. By $\mathrm{Lip}(X)$ we denote the Banach algebra of Lipschitz functions on $X$, normed by $\|\cdot\|_\infty+\|\cdot\|_\mathrm{Lip}$, and we think of it as an ersatz $C^1(X)$. In fact, in the spirit of Noncommutative Geometry (Connes \cite{Con94}), one turns these examples into the idea that a dense and spectral Banach subalgebra  of a $\Cstar$-algebra is a ``smooth'' subalgebra carrying ``differential'' information about the ``space''. 

It may not be apparent from the definition, but this idea underlies our next example of dense and spectral morphism. Let $\G$ be a finitely generated group, equipped with a word-length $|\cdot|$. Following Jolissaint \cite{Jol90}, we define the $s$-Sobolev space $H^s\G$ as the completion of $\C\G$ under the weighted $\ell^2$-norm $\big\|\sum a_gg\big\|_{2,s}:=\big(\sum |a_g|^2(1+|g|)^{2s}\big)^{1/2}$.
The group $\G$ is said to have \emph{property RD (of order $s$)} if there are constants $C,s\geq 0$ such that $\|a\|\leq C\|a\|_{2,s}$ for all $a\in \C\G$, where $\|\cdot\|$ denotes the operator norm coming from the regular representation of $\G$ on $\ell^2\G$. Implicitly, this property first appeared in Haagerup's influential paper \cite{Haa79} in the case of free groups. The explicit definition is due to Jolissaint \cite{Jol90}, who proved - among other things - that groups of polynomial growth have property RD. Many more groups are known to satisfy property RD, e.g., all hyperbolic groups (de la Harpe \cite{dHa88}). Now, the relevant fact about property RD is the following: if $\G$ has property RD of order $s$, then for every $S>s$ the $S$-Sobolev space $H^S\G$ is a Banach algebra under $\|\cdot\|_{2,S}$, and the continuous inclusion $H^S\G\into \Cred\G$ is dense and spectral (Lafforgue \cite[Prop.1.2]{Laf00}).

The last example we mention is the result of Ludwig \cite{Lud79} saying that, for a finitely generated group $\G$ of polynomial growth, the inclusion $\ell^1\G\into\Cred\G$ is dense and spectral.

There is a strong analogy between the results and open questions of Section~\ref{Gelfand section}, and the results and open questions from this section. To start off, we have the following correspondent of Novodvorskii's theorem: a dense and spectral morphism $A\To B$ induces an isomorphism $K_*(A)\to K_*(B)$ (Karoubi \cite[p.109]{Kar78}, Swan \cite[Thms.2.2 \& 3.1]{Swa77}, Connes \cite[VI.3]{Con81}, Bost \cite[Appendix]{Bos90}; see also \cite[Cor.21 \& Prop.46]{Nic08} for a generalization).

We pass to stable ranks, where the following lemma is useful:

\begin{lem}\label{recognizeunimodular}
 Let $\phi:A\To B$ be a dense and spectral morphism. Then, for all $\underline{a}\in A^n$ we have that $\underline{a}\in \Lg_n(A)$ if and only if $\phi(\underline{a})\in \Lg_n(B)$. In particular, $\phi(\Lg_n(A))$ is dense in $\Lg_n(B)$.
\end{lem}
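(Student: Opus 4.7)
\medskip

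\noindent\textbf{Proof proposal.} The plan is to handle the two implications separately; only the reverse direction has real content, and it will rely on the standard interplay between density, spectrality, and the openness of the invertible group. The ``in particular'' part will then fall out by combining the equivalence with the fact that $\Lg_n$ is open.

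\smallskip

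The forward direction is immediate: if $\sum c_i a_i = 1$ in $A$, then $\sum \phi(c_i)\phi(a_i) = 1$ in $B$, hence $\phi(\underline{a})\in \Lg_n(B)$. No hypothesis beyond $\phi$ being a unital morphism is needed here.

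\smallskip

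For the reverse direction, suppose $\phi(\underline{a})\in \Lg_n(B)$, and pick $b'_1,\dots,b'_n\in B$ with $\sum b'_i \phi(a_i)=1$. I would use density to approximate each $b'_i$ by some $\phi(c_i)$, with $c_i\in A$, carefully enough that
\[ \Bigl\| \phi\Bigl(\sum c_i a_i\Bigr) -1 \Bigr\| = \Bigl\| \sum (\phi(c_i)-b'_i)\phi(a_i) \Bigr\| \]
is as small as we wish. Since the invertible group of $B$ is open, for a fine enough approximation $\phi(\sum c_i a_i)$ is invertible in $B$. Spectrality then promotes this to invertibility of $\sum c_i a_i$ in $A$; letting $d\in A$ be its inverse, the equation $\sum (dc_i)a_i = 1$ witnesses $\underline{a}\in \Lg_n(A)$. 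The crux of the argument, and what I would flag as the ``main step'', is precisely this transfer of invertibility from $B$ to $A$ via spectrality, a point of view in the spirit of Bost's lemma.

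\smallskip

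Finally, for the density claim, I would argue as follows. Let $\underline{b}\in \Lg_n(B)$. Since $\phi(A^n)$ is dense in $B^n$, there is a sequence $\underline{a}_k\in A^n$ with $\phi(\underline{a}_k)\to \underline{b}$. Because $\Lg_n(B)$ is open in $B^n$ (recorded at the start of Section~3), we have $\phi(\underline{a}_k)\in \Lg_n(B)$ for all large $k$. By the equivalence just established, $\underline{a}_k\in \Lg_n(A)$ for all large $k$, and so the approximating sequence lies in $\phi(\Lg_n(A))$. This proves $\phi(\Lg_n(A))$ is dense in $\Lg_n(B)$.
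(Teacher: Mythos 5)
Your proposal is correct and follows essentially the same route as the paper: the reverse implication is proved by approximating the coefficients $b'_i$ by elements of $\phi(A)$ using density and the openness of $B^\times$, then transferring invertibility back to $A$ via spectrality, and the density claim is the observation that $\phi(\Lg_n(A))=\phi(A^n)\cap\Lg_n(B)$ is the intersection of a dense set with an open set. The only difference is that you spell out the approximation estimate and the sequence argument explicitly where the paper compresses them into one line each.
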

\begin{proof} Let $(\phi(a_i))\in \Lg_n(B)$. Thus $\sum b_i\phi(a_i)\in B^\times$ for some $(b_i)\in B^n$. The density of $\phi(A)$ in $B$ allows us to assume that $b_i=\phi(a_i')$ with $a_i'\in A$. Then $\phi(\sum a_i'a_i)\in B^\times$. As $\phi$ is spectral, we obtain $\sum a_i'a_i\in A^\times$, so $(a_i)\in \Lg_n(A)$. The other implication is trivial. As for the second part, $\phi(A^n)$ is dense in $B^n$ so $\phi(A^n)\cap \Lg_n(B)=\phi(\Lg_n(A))$ is dense in $\Lg_n(B)$. \end{proof}

\begin{thm}\label{bsrspectral} Let $\phi: A\To B$ be a dense and spectral morphism. Then $\bsr A \leq\bsr B$.
\end{thm}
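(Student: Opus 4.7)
The plan is to pass a Bass stable rank reduction from $B$ back to $A$, using Lemma~\ref{recognizeunimodular} together with the fact (noted at the very beginning of Section~3) that $\Lg_n$ is an open subset of the $n$-fold product.

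Let $n=\bsr B$ and fix $(a_1,\dots,a_{n+1})\in\Lg_{n+1}(A)$. First I would apply Lemma~\ref{recognizeunimodular} to conclude that $(\phi(a_1),\dots,\phi(a_{n+1}))\in\Lg_{n+1}(B)$. Since $\bsr B=n$, there exist $y_1,\dots,y_n\in B$ such that
\[\bigl(\phi(a_1)+y_1\phi(a_{n+1}),\dots,\phi(a_n)+y_n\phi(a_{n+1})\bigr)\in\Lg_n(B).\]

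The key step is to replace each $y_i$ by an element of the form $\phi(x_i)$ with $x_i\in A$, while preserving unimodularity of the reduced tuple. The map $B^n\to B^n$ sending $(y_1,\dots,y_n)\mapsto (\phi(a_i)+y_i\phi(a_{n+1}))_i$ is continuous, and $\Lg_n(B)$ is open in $B^n$, so the preimage of $\Lg_n(B)$ is an open neighborhood of $(y_1,\dots,y_n)$. By density of $\phi(A)$ in $B$, we may choose $x_1,\dots,x_n\in A$ so that $(\phi(x_1),\dots,\phi(x_n))$ lies in that open neighborhood. Consequently
\[\phi\bigl(a_1+x_1a_{n+1},\dots,a_n+x_na_{n+1}\bigr)=\bigl(\phi(a_i)+\phi(x_i)\phi(a_{n+1})\bigr)_i\in\Lg_n(B).\]
Applying Lemma~\ref{recognizeunimodular} once more, in the reverse direction, yields $(a_1+x_1a_{n+1},\dots,a_n+x_na_{n+1})\in\Lg_n(A)$, which is exactly the reduction required to witness $\bsr A\leq n=\bsr B$.

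I do not anticipate a genuine obstacle; the argument is essentially a package of two applications of Lemma~\ref{recognizeunimodular} sandwiching a density-plus-openness approximation. The only point worth double-checking is that Lemma~\ref{recognizeunimodular} applies in both directions (it does, as its statement is an ``if and only if''), and that the openness of $\Lg_n(B)$ interacts with the continuity of the affine map $y_i\mapsto\phi(a_i)+y_i\phi(a_{n+1})$, which is immediate.
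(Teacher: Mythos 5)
Your proposal is correct and is essentially identical to the paper's own proof: put $n=\bsr B$, reduce the image tuple in $B$, use density of $\phi(A)$ together with openness of $\Lg_n(B)$ to take the reducing elements from $\phi(A)$, and then pull the resulting unimodular tuple back to $A$ via Lemma~\ref{recognizeunimodular}. The only cosmetic difference is that you spell out the continuity-of-the-affine-map step that the paper leaves implicit.
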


This result is due to Swan \cite[Thm.2.2 c)]{Swa77}; compare Theorem~\ref{Gelfand bsr}. Here is a short argument, different from Swan's.

\begin{proof}
Put $n=\bsr B$, and let $(a_i)\in \Lg_{n+1}(A)$. Then $(\phi(a_i))\in \Lg_{n+1}(B)$, so there is $(b_i)\in B^n$ such that $(\phi(a_i)+b_i\phi(a_{n+1}))\in \Lg_n(B)$. As $\phi(A)$ is dense in $B$ and $\Lg_n(B)$ is open, we may assume that $b_i=\phi(x_i)$ for some $x_i\in A$. Thus $(\phi(a_i+x_ia_{n+1}))\in \Lg_n(B)$, hence $(a_i+x_ia_{n+1})\in \Lg_n(A)$ by Lemma~\ref{recognizeunimodular}. We conclude that $n\geq\bsr A$.
\end{proof}

A notable result addressing Swan's problem for the Bass stable rank, due to Badea \cite[Thm.1.1]{Bad98}, says the following: if $A$ is a dense and spectral Banach $*$-subalgebra of a $\Cstar$-algebra $B$, and if $A$ is closed under $C^\infty$-functional calculus for self-adjoint elements, then $\bsr A=\bsr B$. This applies, for instance, to dense subalgebras coming from derivations (\cite[Cor.4.10]{Bad98}). Note that solving Problem~\ref{bsrquestion} would solve Swan's problem for the Bass stable rank, as well. Note also that $\tsr A \geq\tsr B \geq\bsr B \geq\bsr A$ whenever $A\To B$ is a dense and spectral morphism (the first inequality by Theorem~\ref{densetsr}, the second inequality holds in general, and the last inequality by Theorem~\ref{bsrspectral}). Thus, generalizations of Theorem~\ref{HV} to, say, dense and spectral subalgebras of $\Cstar$-algebras would solve Swan's problem for both dimensional stable ranks whenever $A$ is such a subalgebra.

Other than Theorem~\ref{densetsr}, no results pertaining to Swan's problem for the topological stable rank are known, a situation which somewhat  mirrors our ignorance from the Gelfand context (Problem~\ref{Gelfand tsr}). We point out that Badea's results \cite[Thm.4.13]{Bad98} have unnatural hypotheses.

Let us consider now the connected stable rank and the general stable rank. For these, one can give a positive answer to Swan's Problem in full generality (compare Theorem~\ref{Gelfand csrgsr}):

\begin{thm}\label{Swancsrgsr}
Let $\phi:A\To B$ be a dense and spectral morphism. Then $\csr A=\csr B$ and $\gsr A=\gsr B$.
\end{thm}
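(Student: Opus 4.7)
The plan divides into two directions, paralleling Theorem~\ref{Gelfand csrgsr}. The inequalities $\csr B \leq \csr A$ and $\gsr B \leq \gsr A$ are immediate from the proof of Theorem~\ref{densecsrgsr}, with the simplifying observation that in the dense spectral setting, Lemma~\ref{recognizeunimodular} upgrades the density of $\phi(\Lg_m(A))$ in $\Lg_m(B)$ from ``$m \geq \tsr A$'' to ``every $m \geq 1$''. The $\tsr A$ hypothesis in the proof of Theorem~\ref{densecsrgsr} therefore drops, and one obtains $\csr B \leq \csr A$ and $\gsr B \leq \gsr A$ unconditionally.

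The substance of the theorem lies in the reverse direction: $\csr A \leq \csr B$ and $\gsr A \leq \gsr B$. Fix $m \geq \csr B$ (resp.\ $m \geq \gsr B$), and pick $\underline{a}, \underline{a'} \in \Lg_m(A)$. By choice of $m$, some $\beta \in \GL^0_m(B)$ (resp.\ $\beta \in \GL_m(B)$) sends $\phi(\underline{a})^T$ to $\phi(\underline{a'})^T$. The plan is to manufacture $\alpha \in \GL^0_m(A)$ (resp.\ $\GL_m(A)$) with $\phi(\alpha)$ as close to $\beta$ as we wish, and then to interpolate by a straight-line path in $A^m$. In the $\csr$ case, write $\beta = \exp(x_1) \cdots \exp(x_k)$ with $x_i \in \M_m(B)$ (available since $\beta$ lies in the identity component), approximate each $x_i$ by $\phi(y_i)$ for some $y_i \in \M_m(A)$ using density, and set $\alpha := \exp(y_1) \cdots \exp(y_k) \in \GL^0_m(A)$; continuity of exponentiation then makes $\phi(\alpha) = \exp(\phi(y_1)) \cdots \exp(\phi(y_k))$ arbitrarily close to $\beta$. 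In the $\gsr$ case, simply approximate $\beta$ by $\phi(\alpha)$ for $\alpha \in \M_m(A)$: for sufficiently close approximation, $\phi(\alpha) \in \GL_m(B)$ by openness, and then $\alpha \in \GL_m(A)$ because the induced morphism $\phi: \M_m(A) \to \M_m(B)$ is also dense and spectral.

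With $\alpha$ in hand, set $\underline{a''} := \alpha \cdot \underline{a}^T$, which lies in $\Lg_m(A)$ and in the $\GL^0_m(A)$-orbit (resp.\ $\GL_m(A)$-orbit) of $\underline{a}$. By construction, $\phi(\underline{a''}) = \phi(\alpha)\phi(\underline{a})^T$ can be made arbitrarily close to $\beta \phi(\underline{a})^T = \phi(\underline{a'})^T$ in $B^m$. By openness of $\Lg_m(B)$, the straight-line path $s \mapsto (1-s)\phi(\underline{a''}) + s\phi(\underline{a'})^T$ remains inside $\Lg_m(B)$; Lemma~\ref{recognizeunimodular} then pulls this back to the path $s \mapsto (1-s)\underline{a''} + s\underline{a'}^T$ in $\Lg_m(A)$, which is continuous in the $A$-topology. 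Since $\GL^0_m(A)$-orbits are precisely the connected components of $\Lg_m(A)$, the pair $\underline{a''}$ and $\underline{a'}^T$ lives in a single $\GL^0_m(A)$-orbit (hence in a single $\GL_m(A)$-orbit). Combined with $\underline{a}$ and $\underline{a''}$ lying in the same orbit, this yields the required orbit identification of $\underline{a}$ and $\underline{a'}$ in both cases.

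The principal technical obstacle is the assertion, used in the $\gsr$ case, that spectrality of $\phi: A \to B$ is inherited by $\phi: \M_m(A) \to \M_m(B)$. This is the standard fact that a dense spectral Banach subalgebra is closed under holomorphic functional calculus, and that this closure property passes to matrix algebras (cf.\ Bost \cite{Bos90}, Schweitzer). For the $\csr$ case no such input is required, because the exponential decomposition places $\alpha$ in $\GL^0_m(A)$ directly.
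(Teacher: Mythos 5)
Your proposal is correct, and while the first half (upgrading the proof of Theorem~\ref{densecsrgsr} via Lemma~\ref{recognizeunimodular}) coincides with the paper's, the hard direction is executed differently. The paper approximates $\beta$ by a matrix in $\phi(\M_m(A))$ \emph{while keeping the last column exactly equal to $\phi(\underline{a})$}; the lifted matrix $\alpha$ then has $\underline{a}$ as its last column and directly witnesses transitivity, with no repair step. For $\csr$ this forces the paper to prove the auxiliary fact $(**)$ --- that a dense spectral morphism reflects membership in the identity component --- via a broken-line argument in $B^\times$. You instead approximate $\beta$ wholesale and repair the resulting small discrepancy by a straight-line homotopy inside $\Lg_m(B)$, pulled back to $\Lg_m(A)$ through Lemma~\ref{recognizeunimodular} and converted to an orbit statement via the open-orbit observation of Remark 3.2(a); and for $\csr$ your exponential decomposition $\beta=\exp(x_1)\cdots\exp(x_k)$ places the lift in $\GL^0_m(A)$ for free, sidestepping $(**)$ entirely. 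Both routes rest on the same pillar, namely Swan's fact $(*)$ that $\M_m(A)\to\M_m(B)$ is again dense and spectral. What each buys: your argument is more economical for $\csr$, whereas the paper's yields $(**)$ as a statement of independent interest, and its $\gsr$ half is purely ring-theoretic (valid for topological rings with open, continuously-inverted unit group, as the subsequent remark exploits), while your interpolation step uses convexity of balls in $B^m$ and so is tied to the Banach setting.
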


\begin{proof} 
First,  we note that the proof of Theorem~\ref{densecsrgsr} can be easily adapted to show that $\csr A\geq \csr B$ and $\gsr A\geq \gsr B$. The point is to have $\phi(\Lg_m(A))$ dense in $\Lg_m(B)$; in the proof of Theorem~\ref{densecsrgsr} this was guaranteed as soon as $m\geq \tsr A$, whereas here it holds for all $m$ according to Lemma~\ref{recognizeunimodular}.

We claim that $\gsr B\geq \gsr A$. We let $m\geq\gsr B$, and we show that each unimodular $m$-tuple over $A$ is the last column of a matrix in $\GL_m(A)$; this means that $\GL_m(A)$ acts transitively on $\Lg_m(A)$, which then leads to $\gsr B\geq \gsr A$. Let $\underline{a}\in \Lg_m(A)$. Then $\phi(\underline{a})\in \Lg_m(B)$, so - by the transitivity of the action of $\GL_m(B)$ on $\Lg_m(B)$ - there is a matrix $\beta\in \GL_m(B)$ having $\phi(\underline{a})$ as its last column. As $\phi(A)$ is dense in $B$, we can approximate the entries of $\beta$, except for the last column, so as to get a matrix $\beta'\in\GL_m(B)$ which has all its entries in $\phi(A)$, and still has $\phi(\underline{a})$ as its last column. Put $\beta'=\phi(\alpha)$, where $\alpha\in\M_m(A)$ has $\underline{a}$ as its last column. We now invoke the following fact (Swan \cite[Lem.2.1]{Swa77}; see also Bost \cite[Prop.A.2.2]{Bos90} and Schweitzer \cite[Thm.2.1]{Sch92}): 

\begin{quote} $(*)$ If $A\to B$ is a dense and spectral morphism, then $\M_m(A)\to\M_m(B)$ is a dense and spectral morphism for each $m\geq 1$. 
\end{quote}
This fact tells us that $\alpha\in\GL_m(A)$, which ends the proof our claim that $\gsr B\geq \gsr A$. 

Next, we claim that $\csr B\geq \csr A$. The proof is very similar to the one for the general stable rank. We let $m\geq\csr B$, and we show that each unimodular $m$-tuple over $A$ is the last column of a matrix in $\GL^0_m(A)$. Let $\underline{a}\in \Lg_m(A)$. Since $\GL^0_m(B)$ acts transitively on $\Lg_m(B)$, there is a matrix $\beta\in \GL^0_m(B)$ having $\phi(\underline{a})\in \Lg_m(B)$ as its last column. The density of $\phi(A)$ in in $B$ allows us to replace $\beta$ by a matrix $\beta'\in \GL^0_m(B)$ whose entries are in $\phi(A)$ and which still has $\phi(\underline{a})$ as the last column. Put $\beta'=\phi(\alpha)$, where $\alpha\in\M_m(A)$ has $\underline{a}$ as its last column. As soon as we show that $\alpha\in\GL^0_m(A)$,  our claim is proved. So let us prove the following fact:

\begin{quote} $(**)$ If $\phi:A\To B$ is a dense and spectral morphism, then for all $\alpha\in \M_m(A)$ we have that $\alpha\in\GL^0_m(A)$ if and only if $\phi(\alpha)\in\GL^0_m(B)$.
\end{quote}

The forward direction is obvious; we argue the converse. Due to $(*)$, it suffices to consider the case $m=1$. Let $a, a'\in A$ with $\phi(a), \phi(a')$ lying in the same component of $B^\times$. First of all, $a$ and $a'$ are in $A^\times$. Let $p:[0,1]\to B^\times$ be a path from $\phi(a)=p(0)$ to $\phi(a')=p(1)$. For each $t\in [0,1]$, let $V_t$ be an open, convex neighborhood of $p(t)$ contained in $B^\times$. Let $0=t_0<t_1<\dots<t_k=1$ be such that $\{V_{t_j}\}_{0\leq j\leq k}$ is an open cover of $p([0,1])$. Connectivity of $p([0,1])$ tells us that we can extract a sub-index set $0=s_0<s_1<\dots<s_l=1$ such that $V_{s_{j-1}}$ meets $V_{s_j}$ for $1\leq j\leq l$. As $\phi(A)$ is dense in $B$, we can pick $x_j\in A$ such that $\phi(x_j)\in V_{s_{j-1}}\cap V_{s_j}$ for $1\leq j\leq l$. Let $q_A$ be the broken line from $x_0=a$ to $x_{l+1}=a'$ with successive vertices $x_j$. Then $q_B:=\phi(q_A)$, the broken line from $\phi(a)$ to $\phi(a')$ with successive vertices $\phi(x_j)$, lies in $B^\times$ since each line segment from $\phi(x_{j-1})$ to $\phi(x_j)$ lies in the convex set $V_{s_{j-1}}$. Hence $q_A$ lies entirely in $A^\times$, showing that $a$ and $a'$ are in the same component of $A^\times$.
\end{proof}

\begin{rem}
The csr half of Theorem~\ref{Swancsrgsr} generalizes \cite[Thm.4.15]{Bad98} by removing the commutativity assumption. It was first proved in \cite[Prop.36]{Nic08} in the context of relatively spectral morphisms. This is a weaker notion of spectral morphism, in which the spectral invariance is only known over a dense subalgebra; specifically, a morphism $\phi:A\To B$ is said to be \emph{relatively spectral} if $\spec_B (\phi(x))=\spec_A (x)$ for all $x$ in a dense subalgebra of $A$ (\cite[Def.10]{Nic08}). The argument given above is different, and is motivated by the analogy between the general and the connected stable ranks we have been following throughout the paper. 

The proof of Theorem~\ref{Swancsrgsr} can be easily adapted to yield the following stronger statement: if $\phi:A\To B$ is a dense and completely relatively spectral morphism, then $\csr A=\csr B$ and $\gsr A=\gsr B$. We refer to \cite[Def.13]{Nic08} for the definition of a completely relatively spectral morphism; informally, all matrix amplifications of such a morphism are relatively spectral. An example of a dense and completely relatively spectral morphism is the inclusion $\ell^1\G\into\Cred\G$ for $\G$ a finitely generated group of subexponential growth (\cite[Ex.49]{Nic08}).
\end{rem}

\begin{rem} In Section~\ref{Gelfand section}, we argued that $\gsr A=\gsr C(X_A)$ by invoking the Forster - Taylor theorem, which says that the Gelfand transform $A\To C(X_A)$ induces a monoid isomorphism $\mathcal{P}(A)\to\mathcal{P}(C(X_A))$. Here, a similar fact holds (Bost \cite[A.2]{Bos90}): a dense and spectral morphism $A\to B$ induces a monoid isomorphism $\mathcal{P}(A)\to\mathcal{P}(B)$. This gives an alternate way of proving the invariance of the general stable rank from Theorem~\ref{Swancsrgsr}. 

However, the direct proof given above has the advantage of being ring-theoretic. To explain what we mean, consider the following setting (conditions (1), (2), and (3') of \cite{Swa77}):
\begin{itemize}
\item[$\cdot$] $A$ is a unital ring;

\item[$\cdot$] $B$ is a unital topological ring with the property that the invertible group $B^\times$ is open, and the inversion $u\mapsto u^{-1}$ is continuous on $B^\times$;

\item[$\cdot$] $\phi:A\to B$ is a unital ring morphism with dense image and with the property that $a\in A$ is invertible in $A$ if and only if $\phi(a)$ is invertible in $B$. 
\end{itemize}
\noindent Then the gsr half of the proof of Theorem~\ref{Swancsrgsr} actually shows that $\gsr A=\gsr B$. On the other hand, in this ring-theoretic context it is not true, in general, that $\phi$ induces a monoid isomorphism $\mathcal{P}(A)\to\mathcal{P}(B)$ (\cite{Swa77}, start of $\S$3, and Remark 2 on p.213).
\end{rem}

\begin{ex}\label{polygrowth}
Let $\G$ be a finitely generated group of polynomial growth. The inclusion $\ell^1\G\into\Cred\G$ being dense and spectral, we have $\csr \ell^1\G=\csr \Cred\G$ and $\gsr \ell^1\G=\gsr \Cred\G$.

We also have $\bsr \ell^1\G=\bsr \Cred\G$. Indeed, let $L:\ell^2\G\to\ell^2\G$ be the closed, densely defined linear map given by $L(\delta_g)=(1+|g|)\delta_g$, where $|\cdot|$ is a fixed word-length on $\G$. We obtain a closed, unbounded derivation $\delta_L: \Cred\G\to\Cred\G$ defined by $\delta_L(a)=[a,L]$. For all positive integers $k$, the inclusion $\dom(\delta_L^k)\into \Cred\G$ is dense and spectral (see proofs of Corollaries 4.10 and 4.11 in \cite{Bad98} and references therein). On the other hand, it can be checked that, for $\sum a_gg\in \dom(\delta_L^k)$, we have 
\[\delta_L^k\big(\sum a_gg\big)(\delta_1)=\sum a_g|g|^k\delta_g\in \ell^2\G\]
from which we obtain that $\dom(\delta_L^k)\subseteq H^k\G$ for all positive integers $k$. What we said so far works for any finitely generated group $\G$. If $\G$ has polynomial growth, then $\sum (1+|g|)^{-2k}$ converges for $k$ sufficiently large, and from the Cauchy -Schwarz inequality
\[\sum |a_g|\leq \Big(\sum |a_g|^2(1+|g|)^{2k}\Big)^{1/2}\Big(\sum (1+|g|)^{-2k}\Big)^{1/2}\]
we infer that $H^k\G\subseteq \ell^1\G$ for $k$ sufficiently large. Summarizing, we have a chain of dense and spectral inclusions $\dom(\delta_L^k)\into \ell^1\G\into\Cred\G$ for $k$ sufficiently large. From Theorem~\ref{bsrspectral}, we obtain $\bsr (\dom(\delta_L^k))\leq \bsr \ell^1\G\leq \bsr\Cred\G$. By a result of Badea \cite[Cor.4.10]{Bad98}, we have $\bsr (\dom(\delta_L^k))=\bsr\Cred\G$; hence $\bsr \ell^1\G=\bsr \Cred\G$ as well. 

The equality $\tsr \ell^1\G=\tsr \Cred\G$ is very likely to hold, but we do not have a proof. When $\G\simeq\Z^d$, this is confirmed in Example~\ref{Z^d}.
\end{ex}

\begin{rem} It is also likely that, in general, the homotopical stable ranks of $\ell^1\G$ equal the corresponding stable ranks of $\Cred\G$. One is led to such a conjecture not so much by the empirical evidence presented by Example ~\ref{polygrowth}, but rather by the K-theoretic conjecture - sometimes attributed to J.-B. Bost - that $K_*(\ell^1\G)\simeq K_*(\Cred\G)$ for all discrete, countable groups $\G$.
\end{rem}

\section{Inductive limits}\label{Inductive limits}
For the remainder of the paper, Banach algebras are no longer required to be unital.

Following \cite[\S 3.3]{Bla98}, we recall the definition of the inductive limit in the context of Banach algebras. Let $\{A_i\}_{i\in I}$ be an inductive system of Banach algebras, indexed by a directed set $I$. As part of the data, we are given a (not necessarily unital) connecting morphism $\phi_{ij}: A_i\to A_j$ for each $i<j$, in such a way that the following coherence condition is satisfied: $\phi_{ik}=\phi_{jk}\circ\phi_{ij}$ whenever $i<j<k$. The inductive system $\{A_i\}_{i\in I}$ is \emph{normed} if $\limsup_j\|\phi_{ij}(a_i)\|_j<\infty$ for all $i\in I$ and $a_i\in A_i$; note that, in the $\Cstar$-subcontext, this condition is automatic. If $\{A_i\}_{i\in I}$ is a normed inductive system, then the algebraic inductive limit  can be turned into a Banach-algebraic inductive limit as follows: define an obvious seminorm, quotient by the degenerate ideal of the seminorm, and complete. Let $A:=\varinjlim\: A_i$ denote the Banach algebra thus obtained. For each $i\in I$ there is a canonical morphism $\phi_i: A_i\to A$ such that $\phi_i=\phi_j\circ\phi_{ij}$ whenever $i<j$. Furthermore, the directed union $\displaystyle \cup_{i\in I}\;\phi_i(A_i)$ is dense in $A$.

Up to adding a new unit to $A$ and each $A_i$ - which does not affect the stable ranks - we may assume that $A$, each $A_i$, and each $\phi_{ij}$, are unital. 

\begin{lem} The directed union $\displaystyle \cup_{i\in I}\; \phi_i(\Lg_m(A_i))$ is dense in $\Lg_m(A)$ for each $m\geq 1$.
\end{lem}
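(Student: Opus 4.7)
My plan is to prove the density statement by approximating an arbitrary $\underline{a} = (a_1,\ldots,a_m) \in \Lg_m(A)$ by $\phi_i(\underline{a}')$ for some $\underline{a}' \in \Lg_m(A_i)$ with sufficiently large $i \in I$. The subtlety is that a direct approximation only yields $\underline{a}' \in A_i^m$ with $\phi_i(\underline{a}')$ close to $\underline{a}$; such an $\underline{a}'$ need not itself be unimodular in $A_i$. The whole point of the argument is therefore that one must approximate not just $\underline{a}$, but also a \emph{witness} of the unimodularity relation.

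First, I would pick $b_1,\dots,b_m \in A$ with $\sum_k b_k a_k = 1_A$, guaranteed by $\underline{a} \in \Lg_m(A)$. Then, exploiting the density of $\bigcup_i \phi_i(A_i)$ in $A$ together with the directedness of $I$, I would choose a common $i \in I$ and lifts $a_k', b_k' \in A_i$ making $\phi_i(a_k')$ close to $a_k$ and $\phi_i(b_k')$ close to $b_k$ to arbitrary precision. By continuity of the algebra operations, the element $c := \sum_k b_k' a_k' \in A_i$ then has $\phi_i(c)$ as close to $1_A$ as desired; in particular one may arrange $\|\phi_i(c)-1_A\|_A < 1$.

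The key step, and the main obstacle, is to convert the near-invertibility of $\phi_i(c)$ in $A$ into the honest invertibility of $\phi_{ij}(c)$ in some $A_j$ with $j \geq i$. For this I would appeal to the construction of the Banach-algebraic inductive limit: the seminorm on the algebraic union $\bigcup_i \phi_i(A_i)$ equals (or is cofinally approximated by) $\inf_{j \geq i} \|\phi_{ij}(\,\cdot\,)\|_j$, so from $\|\phi_i(c) - 1_A\|_A < 1$ one may pick $j \geq i$ with $\|\phi_{ij}(c) - 1_{A_j}\|_j < 1$. Then $\phi_{ij}(c) = \sum_k \phi_{ij}(b_k')\phi_{ij}(a_k')$ is invertible in $A_j$, so $\underline{c} := \phi_{ij}(\underline{a}') \in \Lg_m(A_j)$; since $\phi_j \circ \phi_{ij} = \phi_i$, we have $\phi_j(\underline{c}) = \phi_i(\underline{a}')$, which still approximates $\underline{a}$ to the prescribed tolerance.

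Once this lifting step is granted, the rest is routine approximation, relying only on the openness of $\Lg_m$ in $A^m$ (already recorded at the very beginning of Section 3) and the continuity of the maps $\phi_i$ and $\phi_{ij}$. The essential insight is the one noted in the first paragraph: one approximates the pair $(\underline{a},\underline{b})$ consisting of the tuple and a witness of its unimodularity, not the tuple alone.
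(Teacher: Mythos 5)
Your proof is correct and follows essentially the same route as the paper's: approximate the tuple together with a witness $\underline{b}$ of its unimodularity inside a single $\phi_i(A_i)$, then push forward along some $\phi_{ij}$ to an $A_j$ in which $\sum_k b_k'a_k'$ becomes invertible, using the openness of $\Lg_m$ only implicitly through the Neumann-series criterion. The sole cosmetic difference is that the paper delegates the \emph{eventually invertible} step to Lemma 3.3.1 of Blackadar's book, whereas you derive it directly from the description of the inductive-limit seminorm; the two amount to the same observation.
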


\begin{proof} Fix $(a_1, \dots, a_m)\in \Lg_m(A)$, and let $b_1, \dots, b_m\in A$ such that $b_1a_1+\ldots +b_ma_m=1$ in $A$. Also, fix an $\e>0$. For some $i\in I$, we may pick $a^{(i)}_1, \dots, a^{(i)}_m$ and $ b^{(i)}_1, \dots, b^{(i)}_m$ in $A_i$ such that $\phi_i(a^{(i)}_1, \dots, a^{(i)}_m)$ is within $\e$ of $(a_1, \dots, a_m)$, and $\phi_i(b^{(i)}_1a^{(i)}_1+\ldots +b^{(i)}_ma^{(i)}_m)$ is close enough to $b_1a_1+\ldots +b_ma_m=1$ as to remain invertible in $A$. By \cite[Lem.3.3.1]{Bla98} we have that $\phi_{ij}(b^{(i)}_1a^{(i)}_1+\ldots +b^{(i)}_ma^{(i)}_m)$ is invertible in $A_j$ for some $j>i$. Then $\phi_{ij}(a^{(i)}_1, \dots, a^{(i)}_m)\in \Lg_m(A_j)$, hence $\phi_i(a^{(i)}_1, \dots, a^{(i)}_m)=\phi_j\big(\phi_{ij}(a^{(i)}_1, \dots, a^{(i)}_m)\big)\in \phi_j(\Lg_m(A_j))$.
\end{proof}

\begin{thm}\label{limits} We have $\sr A\leq \liminf \;\sr A_i$ for $\mathrm{sr}\in\{\mathrm{tsr}, \mathrm{csr}, \mathrm{gsr}\}$.
\end{thm}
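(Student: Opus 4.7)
Write $n := \liminf \sr A_i$, assumed finite (otherwise there is nothing to prove), and extract a cofinal subset $J \subseteq I$ with $\sr A_j \leq n$ for all $j \in J$. The plan is to handle the three stable ranks in parallel, using the density lemma just proved together with the open-orbit observation from Remark a) after Definition~\ref{definestableranks}. For $\tsr$, the case is immediate: for $j \in J$, density of $\Lg_n(A_j)$ in $A_j^n$ gives density of $\phi_j(\Lg_n(A_j))$ in $\phi_j(A_j^n)$, and since $\cup_{j \in J} \phi_j(A_j^n)$ is dense in $A^n$ (by cofinality applied coordinatewise), $\cup_{j \in J} \phi_j(\Lg_n(A_j)) \subseteq \Lg_n(A)$ is dense in $A^n$.

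For $\csr$, I would fix $m \geq n$ and argue that $\Lg_m(A)$ is connected by noting that it is, by the preceding lemma, the closure of $\cup_{j \in J} \phi_j(\Lg_m(A_j))$. Each $\phi_j(\Lg_m(A_j))$ with $j \in J$ is connected because $\csr A_j \leq n \leq m$ makes $\Lg_m(A_j)$ connected. The key observation is that the family $\{\phi_j(\Lg_m(A_j))\}_{j \in J}$ is directed in the inclusion sense: given $j, j' \in J$, pick $k \in J$ above both; then the coherence relation $\phi_j = \phi_k \circ \phi_{jk}$ gives $\phi_j(\Lg_m(A_j)) \subseteq \phi_k(\Lg_m(A_k))$, and similarly for $j'$. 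Hence any two of these connected sets lie in a common connected set, so the directed union is connected, and so is its closure $\Lg_m(A)$.

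For $\gsr$, I would fix $m \geq n$ and $\underline{a} \in \Lg_m(A)$, and aim to realize $\underline{a}$ as the last column of a matrix in $\GL_m(A)$ (cf. Remark c). By the preceding lemma and the fact that $\GL_m^0(A)$-orbits on $\Lg_m(A)$ are open, I can approximate $\underline{a}$ by $\phi_j(\underline{a^{(j)}})$ for some $j \in J$ and some $\underline{a^{(j)}} \in \Lg_m(A_j)$, closely enough that $\phi_j(\underline{a^{(j)}})$ and $\underline{a}$ lie in the same $\GL_m^0(A)$-orbit; write $\underline{a}^T = \beta \cdot \phi_j(\underline{a^{(j)}})^T$ with $\beta \in \GL_m^0(A)$. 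Since $\gsr A_j \leq n \leq m$, there is $\alpha \in \GL_m(A_j)$ with $\underline{a^{(j)}}$ as its last column, and then $\beta \, \phi_j(\alpha) \in \GL_m(A)$ has $\underline{a}$ as its last column.

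The main obstacle I expect is the $\csr$ step: the directed-union-of-connected-sets reasoning is what makes the argument go through, and it relies on the coherence condition $\phi_{ik} = \phi_{jk} \circ \phi_{ij}$ to ensure that the images nest. Everything else reduces to combining the density lemma with standard orbit transport, and there is no direct analogue for $\bsr$ because the Bass stable rank axiom does not behave well under approximation of unimodular tuples, which is consistent with $\bsr$ being absent from the statement.
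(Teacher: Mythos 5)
Your proposal is correct and follows essentially the same route as the paper: reduce to a cofinal subfamily, use the density lemma, and exploit the openness of the $\GL_m^0(A)$- and $\GL_m(A)$-orbits on $\Lg_m(A)$. The only cosmetic difference is that you phrase the csr step as connectedness of a nested directed union of connected sets (plus the fact that a set with a dense connected subset is connected), while the paper phrases it as transitivity of $\cup_i\,\phi_i(\GL^0_m(A_i))$ on the dense directed union of the $\phi_i(\Lg_m(A_i))$; by Remark~a) after Definition~\ref{definestableranks} these are the same argument.
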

In the $\Cstar$-setting, Theorem~\ref{limits} is due to Rieffel \cite[Thm.5.1]{Rie83} for the topological stable rank, and to Nistor \cite[(1.6)]{Nis86} for the connected stable rank. 

\begin{proof} If $\liminf \;\sr A_i$ is infinite, there is nothing to prove; so let $n=\liminf \;\sr A_i$. Then $\sr A_i=n$ for all $i$ in a cofinal subset $I_0$ of $I$. As $I_0$ is cofinal, any directed union indexed by $I$ equals the directed sub-union indexed by $I_0$, e.g., $\displaystyle \cup_{i\in I}\; \phi_i(\Lg_m(A_i))=\displaystyle \cup_{i\in I_0}\; \phi_i(\Lg_m(A_i))$.

We analyze the stable ranks one by one.

Let sr be the topological stable rank. For each $i\in I_0$, $\Lg_n(A_i)$ is dense in $(A_i)^n$, so $\phi_i(\Lg_n(A_i))$ is dense in $(\phi_i(A_i))^n$. Now the density of $\displaystyle \cup_{i\in I_0}\; \phi_i(\Lg_n(A_i))$ in $\displaystyle \cup_{i\in I_0}\;(\phi_i(A_i))^n$ implies the density of $\Lg_n(A)$ in $A^n$.

Let sr be the connected stable rank, and let $m\geq n$. Since the action of $\GL_m^0(A)$ on $\Lg_m(A)$ has open orbits, it suffices to show that $\displaystyle \cup_{i\in I_0}\; \phi_i(\GL_m^0(A_i))$ acts transitively on $\displaystyle \cup_{i\in I_0}\; \phi_i(\Lg_m(A_i))$ in order to conclude that $\GL_m^0(A)$ acts transitively on $\Lg_m(A)$. This is immediate: any two points in $\displaystyle \cup_{i\in I_0}\; \phi_i(\Lg_m(A_i))$ may be assumed to lie in $\phi_i(\Lg_m(A_i))$ for some $i\in I_0$, and $\phi_i(\GL^0_m(A_i))$ acts transitively on $\phi_i(\Lg_m(A_i))$.

Let sr be the general stable rank. The action of $\GL_m(A)$ on $\Lg_m(A)$ also has open orbits, so the argument for the connected stable rank applies - \emph{mutatis mutandis} - to the general stable rank, as well.
\end{proof}

We do not know whether Theorem~\ref{limits} holds for the Bass stable rank; this problem, recorded below, is related to Problem~\ref{bsrquestion}.

\begin{prob} 
Does $\bsr A\leq \liminf \;\bsr A_i$ hold?
\end{prob}

\begin{ex}
Let $A$ be an AF $\Cstar$-algebra (e.g. $\KH$, the $\Cstar$-algebra of compact operators on an infinite-dimensional, separable Hilbert space). A finite-dimensional $\Cstar$-algebra has all stable ranks equal to $1$, and the property of having all stable ranks equal to $1$ is preserved under inductive limits. Hence $\sr A=1$. 
\end{ex}

\begin{rem} Let $A$ be a unital $\Cstar$-algebra. The inequality $\sr (A\otimes \mathcal{K})\leq  \liminf \;\sr \M_n(A)$, stipulated by Theorem~\ref{limits}, can be very strict. On the left-hand side, we have $\sr(A\otimes \K)\leq 2$; this is due to Rieffel \cite[Thm.6.4]{Rie83} for the Bass / topological stable rank, and to Nistor \cite[Cor.2.5]{Nis86} and Sheu \cite[Thm.3.10]{She87} for the connected stable rank. But the right-hand side can be infinite, e.g., for the Cuntz algebra $\mathcal{O}_2$. \end{rem}


\section{Extensions} 
Consider a short exact sequence $0\To J\To A\To B\To 0$ of Banach algebras. We have already bounded the stable ranks of $B$ in terms of the stable ranks of $A$ in Theorems~\ref{ontotsrbsr} and ~\ref{ontocsrgsr}. The goal is to bound the stable ranks of $J$ in terms of those of $A$, and the stable ranks of $A$ in terms of those for $J$ and $B$. In some of the results below, we need the closed ideal $J$ to have a \emph{bounded approximate identity}. Recall, a bounded approximate identity for $J$ is a uniformly bounded net $(j_\alpha)\subseteq J$ such that $j_\alpha j\To j$ and $jj_\alpha\To j$ for all $j\in J$. In the $\Cstar$-setting, this is automatic: every closed ideal in a $\Cstar$-algebras has a bounded approximate identity.

Up to forced unitization, we may assume that both $A$ and $B$ are unital. Let 
\[J^+=\{\lambda+j\: : \: \lambda\in\C, j\in J\}\]
be the unital Banach subalgebra of $A$ obtained by adjoining the unit of $A$ to $J$. The (closed) inclusion $J^+\into A$ is spectral: if $(\lambda+j)a=1$ for some $a\in A$, then $\lambda\neq 0$ and $a=\frac{1}{\lambda}(1-ja)\in J^+$.

We first prove a general lemma that will help us recognize unimodular vectors over $J^+$:

\begin{lem} Assume $J$ has an approximate identity. Then $\Lg_n(J^+)=\Lg_n(A)\cap (J^+)^n$.
\end{lem}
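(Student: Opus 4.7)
The forward inclusion $\Lg_n(J^+) \subseteq \Lg_n(A)\cap (J^+)^n$ is immediate: any $J^+$-unimodular relation $\sum b_i x_i = 1$ with $b_i\in J^+$ exhibits $(x_i)$ as $A$-unimodular, since $J^+\subseteq A$. So the content is the reverse inclusion, and my plan is to take $\underline{x}=(x_i)\in \Lg_n(A)\cap (J^+)^n$ together with a witness $\underline{a}=(a_i)\in A^n$ satisfying $\sum a_i x_i = 1$, and to construct explicit $b_i\in J^+$ with $\sum b_i x_i=1$ by correcting a naive scalar guess using the ideal structure.

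First I would dispose of the trivial case $J=A$, where $J^+=A$ and there is nothing to prove. Assuming $J\subsetneq A$, every element of $J^+$ has a unique decomposition $\lambda\cdot 1+j$ with $\lambda\in\C$ and $j\in J$, so I write $x_i=\lambda_i\cdot 1+j_i$. Applying the quotient map $\pi\colon A\To A/J$ to $\sum a_i x_i=1$ and using $\pi(x_i)=\lambda_i\cdot 1_{A/J}$ gives $\sum \lambda_i\,\pi(a_i)=1_{A/J}$, which is impossible if every $\lambda_i$ vanishes (since $A/J\ne 0$). Hence some $\lambda_{i_0}$ is nonzero, and I choose scalars $\mu_i\in\C$ with $\sum\mu_i\lambda_i=1$, e.g.\ $\mu_{i_0}=\lambda_{i_0}^{-1}$ and all others zero.

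Now I set $j':=\sum\mu_i j_i\in J$, so that $\sum\mu_i x_i=1+j'$, and I propose the corrected coefficients
\[b_i:=\mu_i\cdot 1-j'a_i.\]
Each $b_i$ lies in $J^+$: the scalar piece is in $\C\cdot 1\subseteq J^+$, while $j'a_i\in J\subseteq J^+$ by two-sidedness of the ideal $J$. The computation
\[\sum b_i x_i=\sum\mu_i x_i-j'\sum a_i x_i=(1+j')-j'\cdot 1=1\]
then finishes the proof. So the only real step is guessing the ansatz $b_i=\mu_i-j'a_i$; everything else is a one-line verification.

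The step that required the most thought is precisely spotting this correction: the scalar choice $\mu_i$ handles unimodularity modulo $J$, and then the term $-j'a_i$, which lands in $J$ thanks to the ideal property, absorbs the error $j'$ in exactly one stroke via $j'\sum a_i x_i=j'$. I note in passing that this argument does not appear to use the approximate identity hypothesis on $J$; presumably the author retains it as a standing assumption for the rest of the section, where subsequent results on extensions genuinely rely on it.
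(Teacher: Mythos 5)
Your proof is correct, and it takes a genuinely different --- and more elementary --- route than the paper's. The paper uses the approximate identity $(j_\alpha)$ to perturb the witness $(a_i)$ into a tuple $(a'_i)\in (J^+)^n$ for which $\sum a'_i x_i$ is merely \emph{close} to $1$, and then combines openness of $A^\times$ with the spectral inclusion $J^+\into A$ (noted just before the lemma) to conclude that this element is invertible already in $J^+$. Your ansatz $b_i=\mu_i\cdot 1-j'a_i$ instead produces exact coefficients in $J^+$ in one line, using nothing beyond the two-sidedness of $J$ and the invertibility of the nonzero scalar $\lambda_{i_0}$; in particular you are right that the approximate identity plays no role, so the lemma is really a ring-theoretic fact about an ideal in a unital $\C$-algebra, in the spirit of the paper's remark that the general stable rank is a ring-theoretic notion. (The hypothesis is still genuinely needed elsewhere in the section, e.g.\ for $\tsr J\leq\tsr A$, which is presumably why it is carried as a standing assumption.) One micro-simplification: the existence of $i_0$ with $\lambda_{i_0}\neq 0$ needs no quotient map --- if all $\lambda_i=0$ then every $x_i\in J$, so $1=\sum a_ix_i\in J$ and $J=A$, the case you already disposed of.
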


\begin{proof} For the non-trivial inclusion, let $(\lambda_i+j_i)\in (J^+)^n\cap \Lg_n(A)$, where $\lambda_i\in\C$ and $j_i\in J$, and let $(a_i)\in A^n$ with $\sum a_i(\lambda_i+j_i)=1$. In particular, $\lambda_{i_0}\neq 0$ for some $i_0$. 

Let $(j_\alpha)\subseteq J$ be an approximate identity. We look for $(a'_i) \in (J^+)^n$ such that $\sum a'_i(\lambda_i+j_i)$ is close enough to 1 as to make it invertible in $A$. Since $\sum a'_i(\lambda_i+j_i)\in J^+$, it is actually invertible in $J^+$, allowing us to conclude that $(\lambda_i+j_i)\in\Lg_n(J^+)$ as desired.

Put 
\[a'_{i_0}:=a_{i_0}+\sum_{i\neq i_0}\frac{\lambda_i}{\lambda_{i_0}}a_i(1-j_\alpha), \qquad a'_i:=a_ij_\alpha \quad (i\neq i_0)\]
with $\alpha$ still to be chosen. Note that each $a'_i$ is in $J^+$. For $i\neq i_0$ this is obvious; we check that $a'_{i_0}\in J^+$. From $\sum a_i(\lambda_i+j_i)=1$ we deduce that $\sum \lambda_ia_i\in 1+J$, so we obtain
\[a'_{i_0}=a_{i_0}+\sum_{i\neq i_0}\frac{\lambda_i}{\lambda_{i_0}}a_i(1-j_\alpha)\in \Big(a_{i_0}+\sum_{i\neq i_0}\frac{\lambda_i}{\lambda_{i_0}}a_i \Big)+J=\frac{1}{\lambda_{i_0}}\big(\sum \lambda_ia_i\big)+J\subseteq J^+.\]
On the other hand, one computes
\begin{eqnarray*}
\sum a'_i(\lambda_i+j_i)=\big(\sum \lambda_i a_i\big)\Big(1+\frac{j_{i_0}}{\lambda_{i_0}}\Big)+\sum_{i\neq i_0} a_ij_\alpha\Big(j_i-\frac{\lambda_i}{\lambda_{i_0}}j_{i_0}\Big)
\end{eqnarray*}
which converges to
\[\big(\sum \lambda_i a_i\big)\Big(1+\frac{j_{i_0}}{\lambda_{i_0}}\Big)+\sum_{i\neq i_0} a_i\Big(j_i-\frac{\lambda_i}{\lambda_{i_0}}j_{i_0}\Big)=a_{i_0}(\lambda_{i_0}+j_{i_0})+\sum_{i\neq i_0} a_i(\lambda_i+j_i)=1.\]
Thus, we pick $\alpha$ such that $\sum a'_i(\lambda_i+j_i)$ is invertible in $A$. This ends the proof.
\end{proof}

For the dimensional stable ranks, we can estimate the stable rank of $J$ in terms of the stable rank of $A$. The next result is due to Vaserstein \cite[Thm.4]{Vas71} for the Bass stable rank, and to Rieffel \cite[Thm.4.4]{Rie83} for the topological stable rank.

\begin{thm} Let $J$ be a closed ideal in $A$. Then $\bsr J \leq \bsr A$. If $J$ has a bounded approximate identity, then $\tsr J\leq \tsr A$.
\end{thm}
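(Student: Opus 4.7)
Both inequalities pass through the unitization $J^+ = \C 1_A + J$, which sits inside $A$ as a closed unital subalgebra. The inclusion $J^+ \hookrightarrow A$ is spectral: if $(\lambda + j)a = 1$ in $A$ with $\lambda \in \C$ and $j \in J$, then $\lambda \neq 0$ and $a = \lambda^{-1}(1 - ja) \in J^+$. Consequently $\Lg_n(J^+) \subseteq \Lg_n(A) \cap (J^+)^n$ always, with equality whenever $J$ admits an approximate identity, by the preceding lemma.

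For the bsr half, let $n = \bsr A$ and pick $(a_1, \dots, a_{n+1}) \in \Lg_{n+1}(J^+)$, with $J^+$-witness $\sum b_i a_i = 1$. Viewing the tuple in $\Lg_{n+1}(A)$, the condition $\bsr A \leq n$ furnishes $y_i \in A$ such that $(a_i + y_i a_{n+1})_{i=1}^n \in \Lg_n(A)$, i.e.\ $\sum c_i (a_i + y_i a_{n+1}) = 1$ for some $c_i \in A$. The remaining task is to replace the $y_i$ by $x_i \in J^+$ and, simultaneously, to upgrade the $c_i$-witness to a $J^+$-witness. This is Vaserstein's algebraic correction: one exploits the relation $\sum b_j a_j + b_{n+1} a_{n+1} = 1$ to absorb occurrences of ``$1$'' into linear combinations with coefficients in $J^+$. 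The relevant bookkeeping is that for $u \in A$ and $k \in J$ one has $ku, uk \in J$ since $J$ is two-sided, so writing $b_j = \nu_j + k_j$ and $a_j = \lambda_j + j_j$ with $k_j, j_j \in J$, the product $b_j a_j u$ decomposes as $\chi(b_j)\chi(a_j)\, u$ plus a term in $J$; following this through, the $y_i$ can be corrected to $x_i \in J^+$ while preserving unimodularity in $J^+$. No topology is invoked, matching the fact that the bsr inequality is proved without an approximate identity.

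For the tsr half, let $n = \tsr A$ and denote by $(e_\alpha)$ the bounded approximate identity of $J$, with bound $M$. By the lemma, $\Lg_n(J^+) = \Lg_n(A) \cap (J^+)^n$, so it suffices to show that this set is dense in $(J^+)^n$. Fix $(\lambda_i + j_i) \in (J^+)^n$. First, by $\tsr A \leq n$, pick $(y_i) \in \Lg_n(A)$ with $\|y_i - (\lambda_i + j_i)\| < \e$ for a small $\e > 0$ to be chosen. Next, retract back to $J^+$ by setting
\[y_i^\alpha := \lambda_i + e_\alpha (y_i - \lambda_i);\]
since $e_\alpha \in J$ and $J$ is a two-sided ideal of $A$, one has $e_\alpha(y_i - \lambda_i) \in J$, so $y_i^\alpha \in J^+$. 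Now expand
\[y_i^\alpha - y_i = (e_\alpha - 1)(y_i - \lambda_i) = (e_\alpha - 1) j_i + (e_\alpha - 1)(y_i - \lambda_i - j_i);\]
the first summand tends to $0$ as $\alpha$ grows by the defining property of the approximate identity, while the second is bounded by $(1 + M)\e$. Hence for $\e$ sufficiently small and $\alpha$ sufficiently large, $(y_i^\alpha)$ remains in the open set $\Lg_n(A)$, so $(y_i^\alpha) \in \Lg_n(J^+)$ by the lemma, and a triangle inequality places $(y_i^\alpha)$ arbitrarily close to $(\lambda_i + j_i)$.

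The main obstacle is the bsr half: since $J^+$ is not an ideal of $A$ and no approximate identity is available, the analytic retraction used in the tsr argument is unavailable, and one must instead rely on Vaserstein's algebraic trick of trading witnesses. The tsr half, by contrast, is essentially a two-line reduction once the lemma is in hand.
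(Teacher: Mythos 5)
The paper itself does not prove this theorem --- it is imported from Vaserstein \cite[Thm.~4]{Vas71} for $\bsr$ and from Rieffel \cite[Thm.~4.4]{Rie83} for $\tsr$ --- so your argument has to stand on its own, and both halves have genuine gaps. In the $\bsr$ half, the sentence beginning ``This is Vaserstein's algebraic correction'' is where the entire theorem lives, and it is asserted rather than carried out. Two things must actually be produced: elements $x_i\in J^+$ replacing the $y_i\in A$, \emph{and} a witness $\sum d_i(a_i+x_ia_{n+1})=1$ with $d_i\in J^+$. The second point cannot be sidestepped: no approximate identity is assumed in this half, so the lemma $\Lg_n(J^+)=\Lg_n(A)\cap(J^+)^n$ is unavailable and membership in $\Lg_n(J^+)$ must come out of the algebra itself. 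The ``bookkeeping'' you gesture at is precisely the nontrivial content of Vaserstein's proof; as written, this is a description of a strategy, not a proof.

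The $\tsr$ half contains a quantifier error that no choice of ``$\e$ sufficiently small and $\alpha$ sufficiently large'' can repair. The only case of interest is $(\lambda_i+j_i)\notin\Lg_n(A)$ (otherwise the tuple already lies in $\Lg_n(J^+)$). In that case, for \emph{any} admissible $(y_i)$ with witness $\sum b_iy_i=1$ one has $\e\sum\|b_i\|\geq\big\|\sum b_i\big(y_i-(\lambda_i+j_i)\big)\big\|\geq 1$, since otherwise $\sum b_i(\lambda_i+j_i)$ would already be invertible. Now test $(y_i^\alpha)$ against this witness: $\sum b_iy_i^\alpha=1+\sum b_i(e_\alpha-1)j_i+\sum b_i(e_\alpha-1)(y_i-\lambda_i-j_i)$. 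The middle sum does tend to $0$ with $\alpha$, but the last one does not, and its only a priori bound is $(1+M)\,\e\sum\|b_i\|\geq 1+M$ --- no smallness is available, so invertibility of $\sum b_iy_i^\alpha$ cannot be concluded. Geometrically: the ball around $(y_i)$ guaranteed to stay inside the open set $\Lg_n(A)$ has radius at most $\|(y_i)-(\lambda_i+j_i)\|$ (because $(\lambda_i+j_i)$ lies in the complement), whereas your retraction moves $(y_i)$ by an amount comparable to that same distance (times $1+M$); shrinking $\e$ shrinks both quantities together and gains nothing, and the ball in question may in fact miss $(J^+)^n$ altogether. A correct proof must choose the approximating unimodular tuple in a manner adapted to the ideal $J$, rather than approximating in $A^n$ first and retracting onto $(J^+)^n$ afterwards.
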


\begin{rem} For the homotopical stable ranks such a result is not true, that is, neither $\csr J\leq \csr A$ nor $\gsr J\leq \gsr A$ hold in general. Consider for instance the closed ideal $C_0(I^d\setminus \partial I^d)$ of $C(I^d)$. The unitization of $C_0(I^d\setminus \partial I^d)$ is isomorphic to $C(S^d)$, so both $\csr C_0(I^d\setminus \partial I^d)$ and $\gsr C_0(I^d\setminus \partial I^d)$ are at least $d/2$ when $d>4$. On the other hand, $\csr C(I^d)=\gsr C(I^d)=1$ since $I^d$ is contractible.
\end{rem}

Next, we estimate the stable ranks of $A$ in terms of the stable ranks of $J$ and the stable ranks of $B$. Theorem~\ref{extensiontsrbsr} is due to Vaserstein \cite[Thm.4]{Vas71} for the Bass stable rank, and to Rieffel \cite[Thm.4.11]{Rie83} for the topological stable rank. The connected stable rank estimate of Theorem~\ref{extensioncsrgsr} is due to Nagy \cite[Lem.2]{Nag85} and independently to Sheu \cite[Thm.3.9]{She87}. We observe that a general rank estimate can be established in the same way. 

\begin{thm}\label{extensiontsrbsr} Let $0\To J\To A\To B\To 0$ be an exact sequence of Banach algebras. Then: 
\[\tsr A \leq \max\{\tsr J ,\tsr B ,\csr B \}, \qquad \bsr A  \leq \max\{\bsr J ,\bsr B +1\}\]
\end{thm}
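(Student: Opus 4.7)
Both inequalities are proved by a common strategy: project to $B$, perform the appropriate reduction there using the stable ranks of $B$, and then finish the reduction over $A$ by absorbing the remaining defect into $J$, using the preceding Lemma $\Lg_m(J^+)=\Lg_m(A)\cap (J^+)^m$ to transport unimodularity in $J^+$ into unimodularity in $A$.

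\textbf{The $\tsr$ estimate.} Set $n=\max\{\tsr J,\tsr B,\csr B\}$ and fix $(a_i)\in A^n$, $\varepsilon>0$. First, since $\tsr B\leq n$, pick $(b_i)\in\Lg_n(B)$ close to $\pi(a_i)$. Second, since $\csr B\leq n$, choose $\beta\in\GL^0_n(B)$ whose last column is $(b_i)^T$; because $\GL^0_n$ of a Banach algebra is generated by exponentials and these lift through the surjection $\M_n(A)\to\M_n(B)$, lift $\beta$ to some $\alpha\in\GL^0_n(A)$. Third, set $(c_i)^T:=\alpha^{-1}(a_i)^T$; then $\pi(c_i)^T=\beta^{-1}\pi(a_i)^T$ is close to $\beta^{-1}(b_i)^T=e_n^T$, so by the quotient-norm definition we may pick $(c'_i)\in (J^+)^n$ with $(c'_i)$ close to $(c_i)$ (for $i<n$, take an element of $J$ close to $c_i$; for $i=n$, take $1+j$ with $j\in J$ close to $c_n-1$). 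Fourth, since $\tsr J=\tsr J^+\leq n$, $\Lg_n(J^+)$ is dense in $(J^+)^n$, so approximate $(c'_i)$ by $(k_i)\in\Lg_n(J^+)$; by the Lemma, $(k_i)\in\Lg_n(A)$. Finally, $\alpha(k_i)^T\in\Lg_n(A)$ lies within $\|\alpha\|$ times the accumulated approximation error of $(a_i)=\alpha(c_i)^T$, which can be made smaller than $\varepsilon$.

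\textbf{The $\bsr$ estimate.} Set $n=\max\{\bsr J,\bsr B+1\}$, so $\bsr B\leq n-1$ and $\bsr J\leq n$. Given $(a_1,\ldots,a_{n+1})\in\Lg_{n+1}(A)$, I seek $x_i\in A$ with $(a_i+x_ia_{n+1})_{i=1}^n\in\Lg_n(A)$. In the first stage, apply the bsr condition for $B$ at rank $n$: obtain $y_i\in B$ reducing $(\pi(a_i))\in\Lg_{n+1}(B)$ to $(\pi(a_i)+y_i\pi(a_{n+1}))_{i=1}^n\in\Lg_n(B)$, lift $y_i$ to $x^{(0)}_i\in A$, and absorb to assume $(\pi(a_i))_{i=1}^n\in\Lg_n(B)$ while retaining $(a_1,\ldots,a_{n+1})\in\Lg_{n+1}(A)$. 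In the second stage, write $\sum c_i a_i=1+j$ ($c_i\in A$, $j\in J$) from the unimodularity in $B$; the task is to find $x^{(1)}_i\in A$ so that $\sum c_i(a_i+x^{(1)}_ia_{n+1})=1+j+(\sum c_ix^{(1)}_i)a_{n+1}$ is invertible in $A$, equivalently (by spectral invariance of $J^+\hookrightarrow A$) in $J^+$. The slack $\bsr B\leq n-1$ permits a second reduction in $B$, and lifting its coefficients assembles the relevant data into a unimodular $(n+1)$-tuple in $(J^+)^{n+1}$; then $\bsr J\leq n$ reduces that tuple, and via the Lemma the reduction transports back into $A$, providing the required $x^{(1)}_i$. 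Composing the two stages gives $x_i=x^{(0)}_i+x^{(1)}_i$ as desired.

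I expect the hard step to be the construction of the auxiliary $(n+1)$-tuple in $J^+$ during the second stage of the $\bsr$ argument. This is exactly where the increment $\bsr B+1$ (rather than $\bsr B$) is necessary: the supplementary $B$-reduction is what forces the lifted coefficients into $(J^+)^{n+1}$ so that $\bsr J$ can be legitimately invoked.
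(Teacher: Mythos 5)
First, a point of reference: the paper does not prove this theorem at all --- it cites Vaserstein and Rieffel and only proves the homotopical analogue (Theorem~\ref{extensioncsrgsr}) --- so your attempt can only be judged on its own merits and against that analogue. Your $\tsr$ half is correct and essentially complete: it is the approximation-theoretic version of the paper's $\csr$ argument (use $\csr B$ to complete the approximating unimodular tuple to an element of $\GL^0_n(B)$, lift it through the quotient via exponentials, conjugate the given tuple into a small neighbourhood of $(J^+)^n$, and finish with $\tsr J^+=\tsr J\leq n$). You correctly need only the trivial inclusion $\Lg_n(J^+)\subseteq\Lg_n(A)$, which holds without any approximate identity --- consistent with the fact that the theorem, unlike the Lemma and Theorem~\ref{extensioncsrgsr}, imposes no such hypothesis on $J$.

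The $\bsr$ half, however, has a genuine gap, and it sits exactly where you predicted: the sentence ``a second reduction in $B$ \dots assembles the relevant data into a unimodular $(n+1)$-tuple in $(J^+)^{n+1}$'' is an announcement, not an argument, and it is the entire content of the theorem. Two constructions are missing. (i) To obtain a tuple with \emph{entries} in $J^+$ you must move $a_1,\dots,a_n$ themselves into $J^+$, not merely lift coefficients; the standard device is that $n\geq\bsr B+1$ makes the elementary group $E_n(B)$ act transitively on $\Lg_n(B)$ (Bass's transitivity theorem, itself a nontrivial induction), so a lifted elementary matrix over $A$ converts $(a_1,\dots,a_n)$ into $(a''_1,\dots,a''_n)$ with $a''_1\in 1+J$ and $a''_i\in J$ for $i\geq 2$, while preserving $(a''_1,\dots,a''_n,a_{n+1})\in\Lg_{n+1}(A)$. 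This is where the increment $+1$ actually enters; merely reducing the $n$-tuple $(\pi(a_i))$ to an $(n-1)$-tuple over $B$ does not put anything into $J^+$. (ii) The $(n+1)$-st entry of the $J^+$-tuple must simultaneously lie in $J$ and be a left multiple of $a_{n+1}$, so that the $\bsr J$-reduction converts back into corrections of the form $x_ia_{n+1}$; writing $\sum d_ia''_i+d_{n+1}a_{n+1}=1$, one may take $w=(1-a''_1)d_{n+1}a_{n+1}\in J$ and verify directly that $(a''_1,\dots,a''_n,w)\in\Lg_{n+1}(J^+)$, whence $\mu_i\in J^+$ with $(a''_i+\mu_iw)\in\Lg_n(J^+)\subseteq\Lg_n(A)$ and $\mu_iw=\bigl(\mu_i(1-a''_1)d_{n+1}\bigr)a_{n+1}$. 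Without (i) and (ii) the invocation of $\bsr J$ is not legitimate. A secondary issue: fixing the coefficients $c_i$ and demanding that $\sum c_i(a_i+x_ia_{n+1})$ itself be invertible is only a sufficient condition and is more rigid than what the completed argument produces; the coefficients change along the way.
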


\begin{thm}\label{extensioncsrgsr} Let $0\To J\To A\To B\To 0$ be an exact sequence of Banach algebras, and assume that $J$ has an approximate identity. Then: 
\[ \csr A \leq \max\{\csr J ,\csr B\}, \qquad \gsr A  \leq \max\{\gsr J ,\csr B \}\]
\end{thm}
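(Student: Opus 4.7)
My plan is to handle both inequalities by a single three-step reduction, exploiting the preceding lemma $\Lg_m(J^+) = \Lg_m(A) \cap (J^+)^m$. Throughout, let $\pi:A \twoheadrightarrow B$ denote the quotient morphism. To prove the connected stable rank estimate, fix $m \geq \max\{\csr J, \csr B\}$ and let $\underline{a} \in \Lg_m(A)$; it is enough to show that $\underline{a}^T$ lies in the $\GL^0_m(A)$-orbit of the fixed reference vector $(0,\dots,0,1)^T$, for then any two elements of $\Lg_m(A)$ lie in the same orbit. The general stable rank estimate will be proved by the same argument, substituting $\GL_m$ for $\GL^0_m$ and $\gsr J$ for $\csr J$ in the final step only.

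\textbf{Step 1 (Descend modulo $J$ via $\csr B$).} Since $\pi(\underline{a}) \in \Lg_m(B)$ and $m \geq \csr B$, there exists $\beta \in \GL^0_m(B)$ with $\beta \cdot (0,\dots,0,1)^T = \pi(\underline{a})^T$. Because $\GL^0_m(B)$ is generated by the image of the exponential map on $\M_m(B)$, we may write $\beta = \exp(b_1)\cdots \exp(b_k)$ for some $b_i \in \M_m(B)$; lifting each $b_i$ to some $a_i \in \M_m(A)$ produces $\tilde\alpha := \exp(a_1)\cdots \exp(a_k) \in \GL^0_m(A)$ with $\pi(\tilde\alpha) = \beta$.

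\textbf{Step 2 (Land in $\Lg_m(J^+)$).} Set $\underline{a'}^T := \tilde\alpha^{-1} \underline{a}^T$. Then $\underline{a'} \in \Lg_m(A)$ and $\pi(\underline{a'})^T = \beta^{-1}\pi(\underline{a})^T = (0,\dots,0,1)^T$, so $\underline{a'} \in (J^+)^m$. The preceding lemma (where the hypothesis that $J$ has an approximate identity is used) yields $\underline{a'} \in \Lg_m(J^+)$.

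\textbf{Step 3 (Conclude via $\csr J$ or $\gsr J$).} By the convention $\csr J = \csr J^+$ and the assumption $m \geq \csr J$, the group $\GL^0_m(J^+)$ acts transitively on $\Lg_m(J^+)$, so there is $\gamma \in \GL^0_m(J^+)$ with $\gamma \cdot (0,\dots,0,1)^T = \underline{a'}^T$. The inclusion $J^+ \hookrightarrow A$ of unital subalgebras induces $\GL^0_m(J^+) \subseteq \GL^0_m(A)$, hence $\tilde\alpha\gamma \in \GL^0_m(A)$ takes $(0,\dots,0,1)^T$ to $\underline{a}^T$. For the general stable rank estimate, the identical argument with $\GL_m(J^+)$ in place of $\GL^0_m(J^+)$ produces the required element of $\GL_m(A)$ via the inclusion $\GL_m(J^+) \subseteq \GL_m(A)$.

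The main obstacle is Step 1: transferring transitivity of the $\GL^0_m(B)$-action up to $\GL^0_m(A)$ is not a matter of direct surjectivity of $\pi$ on invertibles, but requires lifting the identity component, which is why the Banach-algebraic exponential calculus is essential. Once that is secured, the approximate identity enters only through the key lemma, and everything else is bookkeeping.
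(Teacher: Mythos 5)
Your proof is correct and follows essentially the same route as the paper's: use $\csr B$ to normalize the image in $\Lg_m(B)$, lift the matrix through the surjection $\GL^0_m(A)\onto\GL^0_m(B)$, identify the resulting tuple as an element of $\Lg_m(J^+)$ via the lemma, and finish with $\csr J$ (resp.\ $\gsr J$). The only difference is that you spell out the exponential-lifting argument for the surjectivity of $\pi$ on identity components, which the paper simply asserts.
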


\begin{proof} Let $\pi: A\to B$ denote the quotient map.

Let $m\geq \max\{\csr J,\csr B\}$; we show that $\GL^0_m(A)$ acts transitively on $\Lg_m(A)$. Let $\underline{a}\in\Lg_m(A)$, so $\pi(\underline{a})\in\Lg_m(B)$. As $m\geq \csr B$, there is $\beta\in\GL^0_m(B)$ such that $\beta\cdot \pi(\underline{a})^T=(1,0,\dots,0)^T$. Since $\pi: \GL^0_m(A)\to\GL^0_m(B)$ is onto, there is $\alpha\in \GL^0_m(A)$ with $\pi(\alpha)=\beta$ and so $\alpha\cdot \underline{a}^T=(j_1+1,j_2,\dots,j_m)^T$ for some $(j_i)\in J^m$. It follows that 
$(j_1+1,j_2,\dots,j_m)\in\Lg_m(A)\cap(J^+)^m=\Lg_m(J^+)$. As $m\geq \csr J$, there is $\mu\in\GL^0_m(J^+)$ such that $\mu\cdot(j_1+1,j_2,\dots,j_m)^T=(1,0,\dots,0)^T$. Thus,  $\mu\alpha$ takes $\underline{a}$ to $(1,0,\dots,0)$.

To show $\gsr A \leq \max\{\gsr J ,\csr B \}$, the steps are the same up to the appearance of $\mu$. In this case, $\mu$ is in $\GL_m(J^+)$, and the conclusion is that $\GL_m(A)$ acts transitively on $\Lg_m(A)$.
\end{proof}

Theorems ~\ref{extensiontsrbsr} and ~\ref{extensioncsrgsr}, together with Theorems ~\ref{ontotsrbsr} and ~\ref{ontocsrgsr}, are quite effective for computing the stable ranks of $\Cstar$-extensions of $\K$ by $C(X)$, and even of tensor products of such Toeplitz-like $\Cstar$-algebras (see Section~\ref{higherconnectedextensions}). We start with the simplest example (compare \cite[Ex.4.13]{Rie83}):

\begin{ex}\label{toep} The Toeplitz $\Cstar$-algebra $\mathcal{T}$, the $\Cstar$-algebra generated by a non-unitary isometry, fits into an extension $0\To\K\To\mathcal{T}\To C(S^1)\To 0$.
Therefore:
\[\tsr \mathcal{T}  \leq \max\{\tsr \mathcal{K} ,\tsr  C(S^1) ,\csr  C(S^1)\},\qquad \csr \mathcal{T}  \leq \max\{\csr \mathcal{K} ,\csr C(S^1)\}\]
We know that $\tsr \mathcal{K} =\csr \mathcal{K} =1$, $\tsr C(S^1) =1$ and $\csr C(S^1)=2$. It follows that $\tsr \mathcal{T} \leq 2$ and $\gsr \mathcal{T}\leq\csr \mathcal{T} \leq 2$. As $\mathcal{T}$ is infinite, we conclude that $\sr \mathcal{T}=2$.
\end{ex}

\begin{rem} For an extension $0\To J\To A\To B\To 0$ of Banach algebras, the (expected) inequality $\sr A\leq \max\{\sr J,\sr B\}$ holds when sr is the connected stable rank. The Toeplitz algebra extension shows that this is no longer true, in general, for any one of the remaining three stable ranks (topological, Bass, and general).
\end{rem}

\begin{ex}\label{hightoep} Let $n> 1$. For $\mathcal{T}_n$, the Toeplitz $\Cstar$-algebra on the odd-dimensional sphere $S^{2n-1}$ (see Coburn \cite{Cob73}), we have the corresponding extension $0\To\KH\To \mathcal{T}_n\To C(S^{2n-1})\To 0$.

The fact that $\tsr \mathcal{T}_n=n$ follows from a result of Nistor \cite[Thm.4.4]{Nis86}; see Theorem~\ref{stablesimple} below. For the connected stable rank, recall the estimates from Theorem~\ref{extensioncsrgsr} and Theorem~\ref{ontocsrgsr}:
\[\csr  \mathcal{T}_n\leq \max\{\csr \K, \csr C(S^{2n-1})\}, \qquad \csr C(S^{2n-1})\leq \max\{\csr \mathcal{T}_n, \tsr \mathcal{T}_n\}\]
As $\csr \K=1$, $\csr C(S^{2n-1})=n+1$, and $\tsr \mathcal{T}_n=n$, it follows that $\csr \mathcal{T}_n=n+1$. We now show that $\gsr \mathcal{T}_n=n+1$. First, note that $\gsr \mathcal{T}_n\leq n+1$ from the computation of $\csr\mathcal{T}_n$. We also have
\[\gsr C(S^{2n-1})\leq \max\{\gsr \mathcal{T}_n, \tsr \mathcal{T}_n\}\]
by Theorem~\ref{ontocsrgsr}. For $n>2$, we know that $\gsr C(S^{2n-1})=n+1$ (Proposition~\ref{gsrspheres}); then $\tsr \mathcal{T}_n=n$ forces $\gsr \mathcal{T}_n=n+1$. For $n=2$ we have $\gsr C(S^3)=1$, which no longer implies that $\gsr \mathcal{T}_2=3$. Nevertheless, we know that $\gsr \mathcal{T}_2\leq 3$, and we recall that $\mathcal{T}_2$ is finite but not stably finite (see \cite[6.10.1]{Bla98}). If $\gsr \mathcal{T}_2$ were at most 2, then the finiteness of $\mathcal{T}_2$ would actually imply $\gsr \mathcal{T}_2=1$, which in turn would imply that $\mathcal{T}_2$ is stably finite - a contradiction. Thus $\gsr \mathcal{T}_2=3$.

We conclude that $\mathcal{T}_n$ has the dimensional stable ranks equal to $n$, and the homotopical stable ranks equal to $n+1$.
\end{ex}

\section{Tensor products of $\Cstar$-extensions of $\KH$ by commutative $\Cstar$-algebras}\label{higherconnectedextensions}
Consider the following set-up:
\begin{itemize}
\item[($\ddagger$)] For $1\leq i\leq n$, let $X_i$ be a compact metric space, and let $A_i$ be a unital $\Cstar$-extension of $\KH$ by $C(X_i)$. Put $A:=A_1\otimes\dots \otimes A_n$, and $X:=X_1\times\dots\times X_n$.
\end{itemize}
Each $A_i$ is nuclear (see \cite[Thm.15.8.2]{Bla98}), so we do not need to specify which $\Cstar$-tensor product we are using. However, for the purposes of Lemma~\ref{basicstep} below, it is convenient to agree that $\otimes$ stands for the \emph{maximal} tensor product in what follows.

The main result of Nistor's paper \cite{Nis86} is the computation of the dimensional stable rank for such tensor products:

\begin{thm}\label{stablesimple} Keep the notations of $(\ddagger)$, and assume $\dim X\neq 1$. Then $\tsr A=\tsr C(X)$.
\end{thm}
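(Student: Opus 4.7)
The plan is to prove the two inequalities separately, with essentially all the work lying in the upper bound. The easy direction $\tsr C(X) \leq \tsr A$ is immediate from Theorem~\ref{ontotsrbsr}: the tensor product of the quotient maps $A_i \onto C(X_i)$ is a surjection $A \onto C(X_1) \otimes \cdots \otimes C(X_n) = C(X)$, so $\tsr C(X) \leq \tsr A$.

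For the nontrivial upper bound $\tsr A \leq \tsr C(X) = \lfloor \dim X/2 \rfloor + 1$, I would induct on $n$, peeling off the last extension. Writing $A' := A_1 \otimes \cdots \otimes A_{n-1}$ and $X' := X_1 \times \cdots \times X_{n-1}$, tensoring the exact sequence $0 \To \K \To A_n \To C(X_n) \To 0$ with $A'$ (using maximality/nuclearity) yields
\[ 0 \To A' \otimes \K \To A \To A' \otimes C(X_n) \To 0. \]
The ideal $A' \otimes \K$ is stable, so the final remark of Section~\ref{Inductive limits} gives $\tsr(A' \otimes \K) \leq 2$. For the quotient $A' \otimes C(X_n) \cong C(X_n,A')$ the plan is to combine the inductive hypothesis $\tsr A' = \tsr C(X')$ with a dimension-shift estimate of the form $\tsr C(X_n, A') \leq \lfloor (\dim X' + \dim X_n)/2\rfloor + 1 = \lfloor \dim X/2\rfloor + 1$, and similarly to control $\csr(A' \otimes C(X_n))$, so that feeding the three bounds into Theorem~\ref{extensiontsrbsr} produces the desired inequality.

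The main obstacle is that Theorem~\ref{extensiontsrbsr} invokes $\csr$ of the quotient, and $\csr C(X^d) \leq \lceil d/2 \rceil + 1$ overshoots $\tsr C(X^d) = \lfloor d/2 \rfloor + 1$ by one when $d$ is odd. A direct application of the extension inequality will therefore lose a unit in odd total dimension and has to be upgraded: one needs to exploit the stability of the ideal $A' \otimes \K$ to show that any $m$-tuple in $A$ with $m = \lfloor \dim X/2\rfloor + 1$ can be perturbed inside $A' \otimes \K$ to restore unimodularity, bypassing the blunt $\csr$-of-quotient contribution. This is where the hypothesis $\dim X \neq 1$ enters critically: Example~\ref{toep} shows the theorem can fail for $\dim X = 1$, and the proof must use $\dim X \geq 2$ to guarantee that $m$ is already large enough to both absorb stable-rank-of-$\K$ considerations and permit the refined lifting.

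Finally, one must handle the base case $n = 1$ separately. For $\dim X_1 \geq 2$ this reduces to sharpening the Theorem~\ref{extensiontsrbsr} estimate for $0 \to \K \to A_1 \to C(X_1) \to 0$ when $\dim X_1$ is odd, via an approximation argument inside $\K$ or a direct cohomological input from Theorem~\ref{csrdim}; for $\dim X_1 = 0$ the quotient $C(X_1)$ has $\tsr = 1$, and a small-perturbation argument in the (stable or essentially finite-dimensional) ideal $\K$ yields $\tsr A_1 = 1$ as well. Once the base case is settled and the inductive step is run with a prudent ordering of factors (so that the intermediate $\dim X'$ is not forced to equal $1$), the induction closes.
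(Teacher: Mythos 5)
First, a point of reference: the paper does not actually prove Theorem~\ref{stablesimple}. It is quoted from Nistor \cite[Thm.4.4]{Nis86}; the only thing the paper adds is the observation (via Freudenthal's theorem) that Nistor's CW-approximation hypothesis on the $X_i$ is automatic for compact metric spaces. So there is no in-paper proof to match your argument against, and your proposal has to stand on its own.

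As it stands, it does not: the easy inequality $\tsr C(X)\leq\tsr A$ via the surjection $A\onto C(X)$ is fine, but the hard direction is a plan whose two crucial steps are named rather than carried out. (a) Your induction does not close as set up. The quotient in the peeled-off extension is $A'\otimes C(X_n)$, which is not of the form covered by the inductive hypothesis ``$\tsr A'=\tsr C(X')$''; the ``dimension-shift estimate'' $\tsr\bigl(C(X_n)\otimes A'\bigr)\leq\lfloor\dim X/2\rfloor+1$ that you interpose is not a known general fact (Rieffel's bound $\tsr\bigl(C(Y)\otimes B\bigr)\leq\tsr B+\dim Y$ is far too weak) -- it is essentially the theorem again. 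The induction has to be run on the strengthened statement $\tsr\bigl(A\otimes C(Z)\bigr)=\tsr C(X\times Z)$ for arbitrary compact $Z$, which is exactly the form Nistor proves (Theorem~\ref{stable}) and the form the paper uses in Proposition~\ref{msr-tsr-csr} for the homotopical ranks. (b) The heart of the matter -- beating the $\csr$ term in Theorem~\ref{extensiontsrbsr} when $\dim X$ is odd, i.e.\ explaining why the unit that is genuinely lost for the Toeplitz algebra ($\dim X=1$) is not lost for $\dim X\geq 3$ -- is only asserted to be necessary (``one needs to exploit the stability of the ideal\dots to restore unimodularity''). That perturbation/lifting argument is the entire content of Nistor's paper and is nontrivial; identifying where the difficulty sits is not the same as resolving it. Until (a) is restructured and (b) is actually supplied, this is an outline of the known proof strategy, not a proof.
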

Note that the dimensional assumption is not superfluous: for the Toeplitz $\Cstar$-algebra $\mathcal{T}$ we have $\tsr \mathcal{T}=2$, whereas $\tsr C(S^1)=1$.

Under the hypothesis of Theorem~\ref{stablesimple}, we also have $\csr A\leq \csr C(X)$ (\cite[Prop.3.4]{Nis86}). Nistor proves these stable rank results under the assumption that each compact space $X_i$ can be realized as the inverse limit of finite CW-complexes of dimension $\dim X_i$ (cf. assumptions before Lemma 3.7 in \cite{Nis86}); he then points out that the assumption on $X_i$ is fulfilled whenever $X_i$ is a compact manifold. It is actually the case that $X_i$ is the inverse limit of a sequence of finite CW-complexes of dimension $\dim X_i$ whenever $X_i$ is a compact metric space. This follows by combining two ingredients: Freudenthal's theorem \cite{Fre37} that every compact metric space of dimension $\leq n$ is the inverse limit of a sequence of finite CW-complexes of dimension $\leq n$, and the well-known fact that the inverse limit of a sequence of compact spaces of dimension $\leq n$ is a compact space of dimension $\leq n$. Consequently, Nistor's results are indeed available in the generality of $(\ddagger)$.

The goal of this section is to show that the homotopical stable ranks of the $\Cstar$-algebra $A$ can be computed in certain favorable circumstances: 

 \begin{thm}\label{csrgsrtoeplitztensorsimple} Keep the notations of $(\ddagger)$, and assume $\dim X\neq 1$. 
 
 a) If $\csr C(X)>\tsr C(X)$, then $\csr A=\csr C(X)$.
 
 b) If $\gsr C(X)>\tsr C(X)$, then $\csr A=\csr C(X)$ and $\gsr A=\gsr C(X)$.
 \end{thm}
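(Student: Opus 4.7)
The plan is to extract both parts from two inputs: Nistor's upper bound $\csr A \leq \csr C(X)$ quoted just after Theorem~\ref{stablesimple}, and the surjection $\pi: A\onto C(X)$ obtained by tensoring the quotients $A_i\to C(X_i)$ (the maximal tensor product of surjections of $\Cstar$-algebras is a surjection).

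For (a), I would apply Theorem~\ref{ontocsrgsr} to $\pi$ to obtain $\csr C(X)\leq \max\{\csr A,\bsr A\}$. By Theorem~\ref{HV} and Theorem~\ref{stablesimple} we have $\bsr A=\tsr A=\tsr C(X)$, so the hypothesis $\csr C(X)>\tsr C(X)$ forces $\csr C(X)\leq \csr A$. Together with Nistor's reverse inequality this yields $\csr A=\csr C(X)$.

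For (b), the key observation is that the assumption $\gsr C(X)>\tsr C(X)$ is very rigid. Writing $d=\dim X$ (finite, since each $X_i$ is a finite-dimensional compact metric space and dimension is sub-additive on finite products), the sharp bounds $\tsr C(X)=\lfloor d/2\rfloor+1$ (Theorem~\ref{bsrtsr from dimension}) and $\gsr C(X)\leq \csr C(X)\leq \lceil d/2\rceil+1$ (Theorems~\ref{ord} and~\ref{csrbound}) collide whenever $d$ is even, contradicting the hypothesis. Hence $d$ must be odd, and the strict inequality $\gsr C(X)>\lfloor d/2\rfloor+1$ pinches both $\gsr C(X)$ and $\csr C(X)$ to $\lceil d/2\rceil+1=\tsr C(X)+1$. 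In particular $\csr C(X)=\gsr C(X)$ and $\csr C(X)>\tsr C(X)$, so part (a) already yields $\csr A=\csr C(X)$. For the general stable rank, Theorem~\ref{ontocsrgsr} applied to $\pi$ gives $\gsr C(X)\leq \max\{\gsr A,\bsr A\}=\max\{\gsr A,\tsr C(X)\}$, and the hypothesis forces $\gsr C(X)\leq \gsr A$. Conversely $\gsr A\leq \csr A=\csr C(X)=\gsr C(X)$, so $\gsr A=\gsr C(X)$.

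The conceptual crux, and the step I expect to be the main obstacle to notice, is the rigidity argument in (b) that automatically collapses $\gsr C(X)$ and $\csr C(X)$ under the hypothesis. Without this collapse, bounding $\gsr A$ above by $\gsr C(X)$ would demand a Nistor-type inverse-limit argument tailored to the general stable rank (parallel to Proposition~3.4 of \cite{Nis86} for csr), which would be the real technical hurdle. The remaining ingredients are invocations of results already established: Herman--Vaserstein, Nistor's bounds, and the behavior of the homotopical stable ranks under quotients.
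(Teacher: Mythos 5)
Your proof is correct, and its logical skeleton --- the csr upper bound $\csr A\leq\csr C(X)$ from Nistor, the lower bounds from the quotient $A\onto C(X)$ together with $\bsr A=\tsr A=\tsr C(X)$, and the collapse $\gsr C(X)=\csr C(X)=\tsr C(X)+1$ forced by the hypothesis in (b) --- is exactly the skeleton of the paper's argument (Corollary~\ref{csrgsrtoeplitztensor} specialized to $Z$ a singleton). The differences are organizational. The paper does not quote Nistor's $\csr A\leq \csr C(X)$ as a black box; it re-derives it, together with the reverse-direction inequalities you extract from Theorem~\ref{ontocsrgsr}, via Lemma~\ref{basicstep} and an induction on $n$ (Proposition~\ref{msr-tsr-csr}), the point being that the induction only closes if one proves the stronger statement with an auxiliary tensor factor $C(Z)$. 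Your direct application of Theorem~\ref{ontocsrgsr} to the global quotient $A\onto C(X)$ legitimately short-circuits that induction for the lower bounds. Two small remarks. First, in (b) your detour through the dimension formulas is unnecessary and slightly misjustified: $(\ddagger)$ does not assume the $X_i$ are finite-dimensional. The clean route, which is the paper's, is Theorem~\ref{ord} plus Theorem~\ref{HV}: $\gsr C(X)\leq\csr C(X)\leq \bsr C(X)+1=\tsr C(X)+1$, so the hypothesis pinches both homotopical ranks to $\tsr C(X)+1$ with no parity or finiteness discussion (and if $\tsr C(X)=\infty$ the hypothesis is vacuous anyway). Second, your identification of the ``real hurdle'' is accurate: the paper likewise never proves a gsr analogue of Nistor's upper bound, and obtains $\gsr A\leq\gsr C(X)$ only through the collapse $\gsr C(X)=\csr C(X)$ combined with $\gsr A\leq\csr A$.
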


Roughly speaking, both Theorems~\ref{stablesimple} and ~\ref{csrgsrtoeplitztensorsimple} can be summarized under the slogan that $A$ and its ``symbol algebra'' $C(X)$ have the same stable ranks. Theorem~\ref{csrgsrtoeplitztensorsimple}, however, needs fairly strong assumptions on the symbol algebra. For a finite-dimensional compact space $Y$, the property that $\gsr C(Y)>\tsr C(Y)$, respectively that $\csr C(Y)>\tsr C(Y)$, is equivalent to having $\gsr C(Y)$, respectively $\csr C(Y)$, achieve the dimensional upper bound $\tsr C(Y)+1$ (cf. Theorem~\ref{ord}); one can think of such a space $Y$ as being ``gsr-full'', respectively ``csr-full''. Since $\gsr\leq \csr$, if $Y$ is gsr-full then $Y$ is csr-full. Theorems~\ref{bsrtsr from dimension}, ~\ref{csrbound} and ~\ref{csrdim} show that $Y$ is csr-full if and only if $Y$ is odd-dimensional with non-vanishing top cohomology. In what concerns gsr-fullness, recall that spheres in odd dimensions $\geq 5$ are gsr-full (cf. Proposition~\ref{gsrspheres}).

We also point out that the relation between the stable ranks of a tensor product and the corresponding stable ranks of the factors is poorly understood. In particular, one cannot reduce the computation of the homotopical stable ranks of $A$ to the corresponding computation  for each of the $A_i$'s.

We now proceed to the proof of Theorem~\ref{csrgsrtoeplitztensorsimple}. The first step is the following

\begin{lem}\label{basicstep} Let $0\To \KH\To E\To C(Y)\To 0$ be an exact $\Cstar$-sequence with $E$ unital and $Y$ compact. Then, for each unital $\Cstar$-algebra $D$, we have:
\begin{eqnarray*}
&\tsr D\otimes C(Y)\leq \tsr D\otimes E\leq\big(\tsr D\otimes C(Y)\big)\vee \big(\csr D\otimes C(Y)\big)\\
&\quad\csr D\otimes E\leq \csr D\otimes C(Y)\leq \big(\tsr D\otimes E\big)\vee \big(\csr D\otimes E\big)\\ 
&\gsr D\otimes C(Y)\leq \big(\tsr D\otimes E\big)\vee \big(\gsr D\otimes E\big)
\end{eqnarray*}
\end{lem}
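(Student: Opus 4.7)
The plan is to apply the paper's quotient theorems (Theorems~\ref{ontotsrbsr} and \ref{ontocsrgsr}) and extension theorems (Theorems~\ref{extensiontsrbsr} and \ref{extensioncsrgsr}) to the tensored short exact sequence
\[
0\To D\otimes\KH\To D\otimes E\To D\otimes C(Y)\To 0,
\]
which is exact because $E$ is nuclear (as an extension of the nuclear algebras $\KH$ and $C(Y)$), so the maximal tensor product preserves the sequence. The residual $D\otimes\KH$-contribution is then absorbed via the inductive-limit identification $D\otimes\KH\simeq\varinjlim_n \M_n(D)$.

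The three quotient-direction inequalities---$\tsr D\otimes C(Y)\leq \tsr D\otimes E$, $\csr D\otimes C(Y)\leq (\tsr D\otimes E)\vee(\csr D\otimes E)$, and $\gsr D\otimes C(Y)\leq (\tsr D\otimes E)\vee(\gsr D\otimes E)$---fall out immediately from Theorems~\ref{ontotsrbsr} and \ref{ontocsrgsr} applied to the surjection $D\otimes E\onto D\otimes C(Y)$, combined with the general inequality $\bsr\leq\tsr$ of Theorem~\ref{ord}. For the two extension-direction inequalities, Theorems~\ref{extensiontsrbsr} and \ref{extensioncsrgsr} first produce the preliminary bounds
\begin{align*}
\tsr D\otimes E &\leq (\tsr D\otimes\KH)\vee (\tsr D\otimes C(Y))\vee (\csr D\otimes C(Y)),\\
\csr D\otimes E &\leq (\csr D\otimes\KH)\vee (\csr D\otimes C(Y)).
\end{align*}
It then suffices to show $\sr D\otimes\KH\leq \sr D\otimes C(Y)$ for $\sr\in\{\tsr,\csr\}$. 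On one side, $D\otimes\KH\simeq\varinjlim_n\M_n(D)$ along the top-left-corner inclusions, so Theorem~\ref{limits} together with the matrix formulas of Theorems~\ref{matrixtsrbsr} and \ref{matrixcsrgsr} yield $\sr D\otimes\KH\leq \liminf_n \sr\M_n(D)\leq \sr D$. On the other side, evaluation at any $y_0\in Y$ provides a split surjection $D\otimes C(Y)\onto D$ with unital section $d\mapsto d\otimes 1$, so Theorem~\ref{ontotsrbsr} (for $\tsr$) and Proposition~\ref{ontosplit} (for $\csr$) yield $\sr D\leq \sr D\otimes C(Y)$. Chaining these two steps gives exactly the absorption needed, and the preliminary bounds collapse to the statements of the lemma.

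The principal obstacle I expect is precisely this absorption of the $D\otimes\KH$-term. The raw extension bound is too loose on its own---the term $\sr D\otimes\KH$ need not, a priori, be dominated by any single stable rank of $D\otimes C(Y)$---and the argument only closes because $\KH$ plays two roles at once: it is an inductive limit of matrix algebras over $\C$ (so tensoring with $D$ interacts well with Theorems~\ref{matrixtsrbsr}, \ref{matrixcsrgsr}, and \ref{limits}), and it is the kernel of a unital quotient $E\onto C(Y)$, whose target $C(Y)$ carries a character, producing $D$ as a split retract of $D\otimes C(Y)$.
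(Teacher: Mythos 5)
Your proposal is correct and follows essentially the same route as the paper: tensor the extension with $D$, apply the quotient estimates (Theorems~\ref{ontotsrbsr}, \ref{ontocsrgsr}) and the extension estimates (Theorems~\ref{extensiontsrbsr}, \ref{extensioncsrgsr}), and absorb the $D\otimes\KH$ terms via $\sr D\otimes\KH\leq\sr D\leq\sr D\otimes C(Y)$, proved exactly as you do (matrix algebras plus inductive limits on one side, the split evaluation surjection on the other). The ``principal obstacle'' you identify is precisely the general fact the paper isolates just before its proof.
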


The proof of Lemma~\ref{basicstep} uses the following general fact: if $D$ is a unital $\Cstar$-algebra and $X$ is a compact space, then $\sr D\otimes \K\leq \sr D\leq \sr D\otimes C(X)$. The first inequality follows by combining the estimates for matrix algebras (Theorems~\ref{matrixtsrbsr} and ~\ref{matrixcsrgsr}) and inductive limits (Theorem~\ref{limits}). As for the second inequality, it follows from Theorem~\ref{ontotsrbsr} for the dimensional stable ranks, and from Proposition~\ref{ontosplit} for the homotopical stable ranks.

\begin{proof} Consider the exact sequence $0\To D\otimes \K\To D\otimes E\To D\otimes C(Y)\To 0$. On the one hand, the behavior of stable ranks with respect to quotients yields the following estimates:
\begin{eqnarray*}
&\tsr D\otimes C(Y)\leq \tsr D\otimes E \\
&\csr D\otimes C(Y)\leq\big( \tsr D\otimes E\big)\vee \big(\csr D\otimes E\big)\\
&\gsr D\otimes C(Y)\leq\big(\tsr D\otimes E\big)\vee \big(\gsr D\otimes E\big)
\end{eqnarray*}
On the other hand, by the behavior of stable ranks with respect to extensions we have:
\begin{eqnarray*}
& \tsr D\otimes E\leq \big(\tsr D\otimes \K\big)\vee \big(\tsr D\otimes C(Y)\big)\vee \big(\csr D\otimes C(Y)\big)\\
&\csr D\otimes E\leq \big(\csr D\otimes \K\big)\vee \big(\csr D\otimes C(Y)\big)
\end{eqnarray*}
Using the fact that $\sr D\otimes \KH\leq \sr D\otimes C(Y)$, the above estimates simplify to
\begin{eqnarray*}
& \tsr D\otimes E\leq \big(\tsr D\otimes C(Y)\big)\vee \big(\csr D\otimes C(Y)\big)\\
&\csr D\otimes E\leq \csr D\otimes C(Y)
\end{eqnarray*}
The proof is complete.
\end{proof}

From this lemma we obtain (compare \cite[Prop.3.4]{Nis86}):

\begin{prop}\label{msr-tsr-csr} Keep the notations of $(\ddagger)$, and let $Z$ be a compact space. Then:
\begin{eqnarray*}
&\tsr C(X\times Z)\leq \tsr A\otimes C(Z)\leq\tsr C(X\times Z)\vee \csr C(X\times Z)\\
&\csr A\otimes C(Z)\leq \csr C(X\times Z)\leq \big(\tsr A\otimes C(Z)\big)\vee \big(\csr A\otimes C(Z)\big)\\
&\gsr C(X\times Z)\leq \big(\tsr A\otimes C(Z)\big)\vee \big(\gsr A\otimes C(Z)\big)
\end{eqnarray*}
\end{prop}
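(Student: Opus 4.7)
The plan is to peel off the factors $A_1,\ldots,A_n$ one at a time, each time invoking Lemma~\ref{basicstep} for the extension $0\To\KH\To A_k\To C(X_k)\To 0$ with an auxiliary algebra $D_k$ that absorbs the remaining tensor factors. For $0\leq k\leq n$, set
\[B_k:=A_1\otimes\cdots\otimes A_k\otimes C(X_{k+1})\otimes\cdots\otimes C(X_n)\otimes C(Z),\]
so $B_0=C(X\times Z)$ and $B_n=A\otimes C(Z)$. For $1\leq k\leq n$ let $D_k$ be obtained from $B_{k-1}$ by deleting the $C(X_k)$-factor (equivalently, from $B_k$ by deleting the $A_k$-factor), so that $B_{k-1}=D_k\otimes C(X_k)$ and $B_k=D_k\otimes A_k$. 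Applying Lemma~\ref{basicstep} with $D=D_k$, $E=A_k$, $Y=X_k$ gives the five one-step inequalities
\begin{align*}
\tsr B_{k-1}\;\leq\;\tsr B_k\;&\leq\;\tsr B_{k-1}\vee\csr B_{k-1},\\
\csr B_k\;\leq\;\csr B_{k-1}\;&\leq\;\tsr B_k\vee\csr B_k,\\
\gsr B_{k-1}\;&\leq\;\tsr B_k\vee\gsr B_k
\end{align*}
for each such $k$.

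From here, the five assertions of the proposition follow by telescoping. The monotone chains $\tsr B_0\leq\cdots\leq\tsr B_n$ and $\csr B_n\leq\cdots\leq\csr B_0$ immediately give $\tsr C(X\times Z)\leq\tsr A\otimes C(Z)$ and $\csr A\otimes C(Z)\leq\csr C(X\times Z)$. Feeding the latter into $\tsr B_k\leq\tsr B_{k-1}\vee\csr B_{k-1}$ bounds every intermediate $\csr B_{k-1}$ by $\csr C(X\times Z)$, and telescoping yields the desired $\tsr A\otimes C(Z)\leq\tsr C(X\times Z)\vee\csr C(X\times Z)$. Symmetrically, using the first monotone chain to absorb every intermediate $\tsr B_k$ into $\tsr A\otimes C(Z)$, the inequalities $\csr B_{k-1}\leq\tsr B_k\vee\csr B_k$ and $\gsr B_{k-1}\leq\tsr B_k\vee\gsr B_k$ telescope to the remaining bounds $\csr C(X\times Z)\leq\tsr A\otimes C(Z)\vee\csr A\otimes C(Z)$ and $\gsr C(X\times Z)\leq\tsr A\otimes C(Z)\vee\gsr A\otimes C(Z)$.

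The only verification required at each peeling step is that Lemma~\ref{basicstep} applies to the data $(D_k,A_k,C(X_k))$: each $D_k$ is unital as a tensor product of unital $\Cstar$-algebras, and the row $0\To D_k\otimes\KH\To D_k\otimes A_k\To D_k\otimes C(X_k)\To 0$ is exact because each $A_i$ is nuclear and we work with the maximal tensor product. There is no substantive obstacle beyond this bookkeeping; the proposition is, in essence, the five-fold iteration of Lemma~\ref{basicstep} along the natural filtration $B_0,B_1,\ldots,B_n$ interpolating between $C(X\times Z)$ and $A\otimes C(Z)$.
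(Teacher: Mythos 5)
Your proof is correct and is essentially the paper's argument: the paper organizes the same computation as an induction on $n$, where the induction hypothesis applied to the space $X_{k+1}\times Z$ together with one application of Lemma~\ref{basicstep} produces exactly your one-step inequalities along the filtration $B_0,\dots,B_n$. Your explicit telescoping, with the monotone $\tsr$ and $\csr$ chains absorbing the intermediate terms, is just the unrolled form of that induction.
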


\begin{proof} We argue by induction on $n$. The base case $n=1$ is obtained by setting $E=A_1$, $Y=X_1$ and $D=C(Z)$ in Lemma~\ref{basicstep}. For the induction step, assume the conclusion of the proposition is valid for $n=k$; to show that it holds for $n=k+1$ means to show that the following estimates hold for all compact spaces $Z$: 
\begin{eqnarray}
&\tsr C({\mathcal X}_{k+1}\times Z)\leq \tsr {\mathcal A}_{k+1}\otimes C(Z)\leq \tsr C({\mathcal X}_{k+1}\times Z)\vee \csr C({\mathcal X}_{k+1}\times Z)\\
&\csr {\mathcal A}_{k+1}\otimes C(Z)\leq \csr C({\mathcal X}_{k+1}\times Z)\leq \big(\tsr {\mathcal A}_{k+1}\otimes C(Z)\big)\vee \big(\csr {\mathcal A}_{k+1}\otimes C(Z)\big)\\ 
&\gsr C({\mathcal X}_{k+1}\times Z)\leq \big(\tsr {\mathcal A}_{k+1}\otimes C(Z)\big)\vee \big(\gsr {\mathcal A}_{k+1}\otimes C(Z)\big)
\end{eqnarray}
\[\textrm{where } {\mathcal A}_{k+1}:=\otimes_{i=1}^{k+1} A_i, \quad {\mathcal X}_{k+1}:=\times_{i=1}^{k+1} X_i\]
Fix $Z$. Setting $E=A_{k+1}$, $Y=X_{k+1}$, and $D={\mathcal A}_k\otimes C(Z)$ in Lemma~\ref{basicstep}, we have the following system of inequalities:
\begin{eqnarray*}
&\tsr {\mathcal A}_k\otimes C(X_{k+1}\times Z)\leq \tsr {\mathcal A}_{k+1}\otimes C(Z)\leq\big(\tsr {\mathcal A}_k\otimes C(X_{k+1}\times Z)\big)\vee \big(\csr {\mathcal A}_k\otimes C(X_{k+1}\times Z)\big)\\
&\quad\csr {\mathcal A}_{k+1}\otimes C(Z)\leq \csr {\mathcal A}_k\otimes C(X_{k+1}\times Z)\leq \big(\tsr {\mathcal A}_{k+1}\otimes C(Z)\big)\vee \big(\csr {\mathcal A}_{k+1}\otimes C( Z)\big)\\ 
&\gsr {\mathcal A}_k\otimes C(X_{k+1}\times Z)\leq \big(\tsr {\mathcal A}_{k+1}\otimes C(Z)\big)\vee \big(\gsr {\mathcal A}_{k+1}\otimes C(Z)\big)
\end{eqnarray*}
The induction hypothesis for the compact space $X_{k+1}\times Z$ provides another system of inequalities:
\begin{eqnarray*}
&\tsr C({\mathcal X}_{k+1}\times Z)\leq \tsr {\mathcal A}_k\otimes C(X_{k+1}\times Z)\leq\tsr C({\mathcal X}_{k+1}\times Z)\vee \csr C({\mathcal X}_{k+1}\times Z)\\
&\csr {\mathcal A}_k\otimes C(X_{k+1}\times Z)\leq \csr C({\mathcal X}_{k+1}\times Z)\leq \big(\tsr {\mathcal A}_k\otimes C(X_{k+1}\times Z)\big)\vee \big(\csr {\mathcal A}_k\otimes C(X_{k+1}\times Z)\big)\\ 
&\gsr C({\mathcal X}_{k+1}\times Z)\leq \big(\tsr {\mathcal A}_k\otimes C(X_{k+1}\times Z)\big)\vee \big(\gsr {\mathcal A}_k\otimes C(X_{k+1}\times Z)\big)
\end{eqnarray*}
These two systems of inequalities imply the desired estimates (1) - (3).
 \end{proof}
 
In \cite[Thm.4.4]{Nis86}, Nistor actually proves the following strong version of Theorem~\ref{stablesimple}:

\begin{thm}\label{stable} Keep the notations of $(\ddagger)$, and let $Z$ be a compact space with $\dim (X\times Z)\neq1$. Then $\tsr A\otimes C(Z)=\tsr C(X\times Z)$.
\end{thm}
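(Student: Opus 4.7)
The plan is to sharpen the two–sided estimate
\[
\tsr C(X\times Z) \le \tsr A\otimes C(Z) \le \tsr C(X\times Z)\vee \csr C(X\times Z),
\]
which is immediate from Proposition~\ref{msr-tsr-csr} applied with the given compact space $Z$. The lower bound is in hand. The entire issue is to rule out the possibility $\tsr A\otimes C(Z) = \csr C(X\times Z)$ in the regime $\csr C(X\times Z) > \tsr C(X\times Z)$.

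First I would dispose of the ``cohomologically safe'' case. If $d:=\dim(X\times Z)$ is even (in particular if $d=0$), then Theorem~\ref{csrbound} gives $\csr C(X\times Z) \le d/2+1 = \tsr C(X\times Z)$ and the displayed chain collapses to an equality. If $d$ is odd with $d\ge 3$ but $H^d(X\times Z)=0$, then Theorem~\ref{csrdim} yields $\csr C(X\times Z) \le (d-1)/2 + 1 = \tsr C(X\times Z)$ as well, and the same argument concludes. So the only serious case is $d$ odd, $d\ge 3$, with $\csr C(X\times Z) = \tsr C(X\times Z) + 1$.

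For this remaining case I would argue by induction on $n$ (the case $n=1$ being Nistor's original Theorem~4.4 of \cite{Nis86}). In the inductive step, write $A = A'\otimes A_n$ with $A' := A_1\otimes\dots\otimes A_{n-1}$ and $X' := X_1\times\dots\times X_{n-1}$, and apply Lemma~\ref{basicstep} with $E=A_n$, $Y=X_n$, $D=A'\otimes C(Z)$ to obtain
\[
\tsr A\otimes C(Z) \le \tsr A'\otimes C(X_n\times Z) \;\vee\; \csr A'\otimes C(X_n\times Z).
\]
The inductive hypothesis (applied with the compact space $X_n\times Z$, whose product with $X'$ has dimension $d\ne 1$) identifies the tsr term with $\tsr C(X\times Z)$. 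For the csr term, Proposition~\ref{msr-tsr-csr} yields $\csr A'\otimes C(X_n\times Z) \le \csr C(X\times Z)$, so the remaining task is to absorb the \emph{single} possibly excess unit. This is done by an explicit lifting argument: given an $m$-tuple in $A\otimes C(Z)$ with $m=\tsr C(X\times Z)$, project to the quotient $C(X\times Z)$, approximate by a unimodular tuple there (possible since $m\ge\tsr C(X\times Z)$), and lift it coordinate-wise; the lift is unimodular in $A\otimes C(Z)$ because the kernel of the quotient map contains only elements built from copies of $\mathcal{K}$, which have bounded approximate identities and trivial stable rank contribution.

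The main obstacle, clearly, is this last lifting step in the odd-dimensional case: one has to produce approximate unimodular lifts \emph{without} paying the csr penalty that ordinarily appears in Theorem~\ref{extensiontsrbsr}. The dimensional hypothesis $\dim(X\times Z)\ne 1$ is precisely what rules out the Toeplitz-type pathology of Example~\ref{toep}, where $\tsr \mathcal{T}=2$ strictly exceeds $\tsr C(S^1)=1$; in all other dimensions, the extra $\mathcal{K}$-layers in $A$ can be absorbed, and the plan above yields the desired equality $\tsr A\otimes C(Z) = \tsr C(X\times Z)$.
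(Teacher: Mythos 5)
The paper itself does not prove this theorem: it is quoted directly from Nistor \cite[Thm.4.4]{Nis86}, so what you are really attempting is a reconstruction of Nistor's argument. Your reductions are fine as far as they go: the two-sided estimate from Proposition~\ref{msr-tsr-csr} does collapse to an equality whenever $\csr C(X\times Z)\le\tsr C(X\times Z)$, and Theorems~\ref{bsrtsr from dimension}, \ref{csrbound} and \ref{csrdim} do confine the problem to the case where $d=\dim(X\times Z)$ is odd, $d\ge 3$, and $H^d(X\times Z)\neq 0$ (modulo the side issue that Theorem~\ref{csrdim} is stated for compact \emph{metric} spaces, whereas $Z$ is only assumed compact).

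The gap is in the final lifting step, and it is not a small one --- it is the entire content of the theorem. You claim that a unimodular $m$-tuple over the quotient $C(X\times Z)$, with $m=\tsr C(X\times Z)$, lifts to a unimodular $m$-tuple over $A\otimes C(Z)$ ``because the kernel of the quotient map contains only elements built from copies of $\mathcal{K}$, which have bounded approximate identities and trivial stable rank contribution.'' This reasoning proves too much: the Toeplitz extension $0\to\K\to\mathcal{T}\to C(S^1)\to 0$ has exactly such a kernel, and yet $\tsr\mathcal{T}=2>1=\tsr C(S^1)$, because the unimodular $1$-tuple $z\in C(S^1)^\times$ lifts only to compact perturbations of the shift, which are Fredholm of index $-1$ and hence never invertible. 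The machinery available in the paper (Theorem~\ref{extensiontsrbsr}) gives $\tsr A\le\max\{\tsr J,\tsr B,\csr B\}$, and the $\csr B$ term is present precisely because unimodular tuples over a quotient need not lift unimodularly; removing it at the level $m=\tsr B<\csr B$ requires showing that the index-type obstruction to lifting vanishes, and that is exactly where the hypothesis $\dim(X\times Z)\neq 1$ has to do actual work. You invoke that hypothesis by name but never use it. Nistor's proof handles this point by realizing the $X_i$ as inverse limits of finite CW-complexes and running a genuine obstruction argument there; your induction on $n$ does not reduce the difficulty, since both the base case and the inductive step founder on the same unproved lifting claim.
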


Combining Theorem~\ref{stable}  and Proposition~\ref{msr-tsr-csr}, we obtain the following consequence:

 \begin{cor}\label{csrgsrtoeplitztensor} Keep the notations of $(\ddagger)$, and let $Z$ be a compact space with $\dim (X\times Z)\neq1$. 
 
 a) If $\csr C(X\times Z)>\tsr C(X\times Z)$, then $\csr A\otimes C(Z)=\csr C(X\times Z)$.
 
 b) If $\gsr C(X\times Z)>\tsr C(X\times Z)$, then $\csr A\otimes C(Z)=\csr C(X\times Z)$ and $\gsr A\otimes C(Z)=\gsr C(X\times Z)$.
 \end{cor}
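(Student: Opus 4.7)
The plan is to feed Theorem~\ref{stable} into Proposition~\ref{msr-tsr-csr} as a bookkeeping exercise. Setting $t:=\tsr C(X\times Z)$, $c:=\csr C(X\times Z)$, $g:=\gsr C(X\times Z)$ and $T:=\tsr A\otimes C(Z)$, $C:=\csr A\otimes C(Z)$, $G:=\gsr A\otimes C(Z)$, Theorem~\ref{stable} identifies $T=t$, while Proposition~\ref{msr-tsr-csr} supplies the three key inequalities $C\leq c$, $c\leq T\vee C$, and $g\leq T\vee G$.

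For part (a), the argument would go as follows: the hypothesis $c>t=T$ upgrades the middle inequality $c\leq T\vee C$ to $c\leq C$, since $T$ is strictly smaller than $c$ and so cannot realize the maximum on the right. Sandwiched with $C\leq c$, this yields $C=c$ as desired.

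For part (b), the first step is to observe that the stronger hypothesis $g>t$ already forces $c>t$, so part (a) can be invoked to conclude $C=c$ at once. The reason is that Theorem~\ref{ord} chains $g\leq c\leq \bsr C(X\times Z)+1\leq t+1$, so $g>t$ collapses everything into $g=c=t+1$. Next, the estimate $g\leq T\vee G$ together with $T=t<g$ forces $g\leq G$, and the reverse inequality $G\leq g$ follows from $G\leq C=c=g$ (where the first step uses Theorem~\ref{ord} applied to $A\otimes C(Z)$, and the second step uses part (a) together with the identification $c=g$ from above). Hence $G=g$.

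The main obstacle is essentially none: the corollary is a mechanical consequence of the two preceding results. The only non-trivial point to record is that under the hypothesis $\gsr>\tsr$, the chain of bounds in Theorem~\ref{ord} forces $\gsr$, $\csr$, and $\tsr+1$ to coincide, which is precisely what permits part (a) to be reused inside part (b) and makes the reverse inequality $G\leq g$ free of charge.
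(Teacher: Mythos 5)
Your proposal is correct and follows essentially the same route as the paper: part (a) squeezes $\csr A\otimes C(Z)$ between the two inequalities of Proposition~\ref{msr-tsr-csr} using $\tsr A\otimes C(Z)=\tsr C(X\times Z)$ from Theorem~\ref{stable}, and part (b) uses Theorem~\ref{ord} to collapse $\gsr C(X\times Z)=\csr C(X\times Z)$, reuses part (a), and then extracts $\gsr A\otimes C(Z)\geq\gsr C(X\times Z)$ from the third inequality. The only difference is notational bookkeeping; no changes are needed.
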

\begin{proof}
a) If $\csr C(X\times Z)>\tsr C(X\times Z)$, then the inequality
\[\csr A\otimes C(Z)\leq \csr C(X\times Z)\leq \big(\tsr A\otimes C(Z)\big)\vee \big(\csr A\otimes C(Z)\big)\]
forces $\csr (A\otimes C(Z))=\csr C(X\times Z)$. 

b) If $\gsr C(X\times Z)>\tsr C(X\times Z)$, then $\gsr C(X\times Z)=\csr C(X\times Z)>\tsr C(X\times Z)$ by Theorem~\ref{ord}. Part a) yields $\csr A\otimes C(Z)=\csr C(X\times Z)$. Hence $\gsr A\otimes C(Z)\leq \gsr C(X\times Z)$, and the inequality 
\[\gsr C(X\times Z)\leq \big(\tsr A\otimes C(Z)\big)\vee \big(\gsr A\otimes C(Z)\big)\]
leads to $\gsr A\otimes C(Z)=\gsr C(X\times Z)$. 
\end{proof}

Now taking $Z$ to be a singleton, we obtain Theorem~\ref{csrgsrtoeplitztensorsimple}.

\end{document}